\newtheorem{thm}{\protect\theoremname}
\theoremstyle{plain}
\newtheorem*{thm*}{Theorem}
\newtheorem*{cor*}{Corollary}
\newtheorem*{not*}{Notation}
\newtheorem{prop}[thm]{\protect\propositionname}
\theoremstyle{definition}
\newtheorem{defn}[thm]{\protect\definitionname}
\theoremstyle{lem}
\newtheorem{lem}[thm]{\protect\lemmaname}
\theoremstyle{cor}
\newtheorem{cor}[thm]{\protect\corollaryname}
\theoremstyle{remark}
\newtheorem{rem}[thm]{\protect\remarkname}
\newtheorem{example}[thm]{\protect\examplename}
\theoremstyle{claim}
\numberwithin{equation}{subsection}
\numberwithin{figure}{subsection}
\numberwithin{thm}{subsection}
\theoremstyle{plain}
\providecommand{\lemmaname}{Lemma}
\providecommand{\definitionname}{Definition}
\providecommand{\examplename}{Example}
\providecommand{\propositionname}{Proposition}
\providecommand{\remarkname}{Remark}
\providecommand{\theoremname}{Theorem}
\providecommand{\corollaryname}{Corollary}
\begin{document}
\onehalfspace

\author{Vincenzo Cambò$^{\dagger}$} 
\address{$\left(\dagger\right)$ SISSA, Via Bonomea 265 - 34136, Trieste, ITALY}
\email{vcambo@sissa.it}
\title{On the $E$-polynomial of parabolic $\mathrm{Sp}_{2n}$-character
varieties}

\begin{abstract}
{We find the $E$-polynomials of a family of parabolic $\mathrm{Sp}_{2n}$-character varieties $\mathcal{M}_{n}^{\xi}$
of Riemann surfaces by constructing a stratification, proving that each stratum has polynomial count, applying a result of Katz regarding the counting functions, and finally adding up the resulting $E$-polynomials of the strata. To count the number of $\mathbb{F}_{q}$-points of the strata, we invoke a formula due to Frobenius. Our calculation make use of a formula for the evaluation of characters on semisimple elements coming from Deligne-Lusztig theory, applied to the character theory of $\mathrm{Sp}{\left(2n,\mathbb{F}_{q}\right)}$, and M\"obius inversion on the poset of set-partitions. We compute the Euler characteristic of the $\mathcal{M}_{n}^{\xi}$ with these polynomials, and show they are connected.}

\end{abstract}

\maketitle
\tableofcontents
\section{Introduction}
Let $\Sigma_{g}$ be a compact Riemann surface of genus $g\geq0$
and let $G$ be a complex reductive group. The $G$-character variety
of $\Sigma_{g}$ is defined as the moduli space of representations
of $\pi_{1}{\left(\Sigma_{g}\right)}$ into $G$. Using the standard
presentation of $\pi_{1}{\left(\Sigma_{g}\right)}$, we have the following
description of this moduli space as an affine GIT quotient:
\[
\mathcal{M}_{B}{\left(G\right)}=\left\{ \left(A_{1},B_{1},\ldots,A_{g},B_{g}\right)\in G^{2g}\mid\prod\limits_{\substack{i=1}}^{g}{\left[A_{i}:B_{i}\right]}=\mathrm{Id}_{G}\right\} //G
\]
where $\left[A:B\right]:=ABA^{-1}B^{-1}$ and $G$ acts by simultaneously
conjugation.
For complex linear groups $G=\mathrm{GL}{\left(n,\mathbb{C}\right)},\mathrm{SL}{\left(n,\mathbb{C}\right)}$,
the representations of $\pi_{1}{\left(\Sigma_{g}\right)}$ into $G$
can be understood as $G$-local systems $E\rightarrow\Sigma_{g}$,
hence defining a flat bundle $E$ whose degree is zero. 

For $G=\mathrm{GL}{\left(n,\mathbb{C}\right)},\mathrm{SL}{\left(n,\mathbb{C}\right)}$, a natural generalization consists of allowing bundles $E$ of non-zero
degree $d$. In this case, one considers the space of the irreducible
$G$-local systems on $\Sigma_{g}$ with prescribed cyclic holonomy
around one puncture, which correspond to representations $\rho:\pi_{1}{\left(\Sigma_{g}\setminus\left\{p_{0}\right\} \right)}\rightarrow G$,
where $p_{0}\in\Sigma_{g}$ is a fixed point, and $\rho{\left(\gamma\right)}=e^{\frac{2\pi id}{n}}\mathrm{Id}_{G}$,
with $\gamma$ a loop around $p_{0}$, giving rise to the moduli space
\[
\mathcal{M}_{B}^{d}{\left(G\right)}=\left\{ \left(A_{1},B_{1},\ldots,A_{g},B_{g}\right)\in G^{2g}\mid\prod\limits_{\substack{i=1}}^{g}{\left[A_{i}:B_{i}\right]}=e^{\frac{2\pi id}{n}}\mathrm{Id}_{G}\right\} //G.
\]
The space $\mathcal{M}_{B}^{d}{\left(G\right)}$ is known in the literature
as the \emph{Betti moduli space}. These varieties have a very rich
structure and they have been the object of study in a broad range
of areas.

In his seminal work \cite{key-HI}, after studying the dimensional reduction
of the Yang-Mills equations from four to two dimensions, Hitchin introduced
a family of completely integrable Hamiltonian systems. These equations
are known as \emph{Hitchin's self-duality equations} on a rank $n$
and degree $d$ bundle on the Riemann surface $\Sigma_{g}$.

The moduli space of solutions comes equipped with a hyperkähler manifold
structure on its smooth locus. This hyperkähler structure has two
distinguished complex structures. One is analytically isomorphic to
$\mathcal{M}_{Dol}^{d}{\left(G\right)}$, a moduli space of $G$-Higgs
bundles, and the other is $\mathcal{M}_{DR}^{d}{\left(G\right)}$, the
space of algebraic flat bundles on $\Sigma_{g}$ of degree $d$ and
rank $n$, whose algebraic connections on $\Sigma_{g}\setminus\left\{p_{0}\right\}$
have a logarithmic pole at $p_{0}$ with residue $e^{\frac{2\pi id}{n}}\mathrm{Id}$.
By Riemann-Hilbert correspondence (\cite{key-DE}, \cite{key-SI}), the space $\mathcal{M}_{DR}^{d}{\left(G\right)}$
is analytically (but not algebraically) isomorphic to $\mathcal{M}_{B}^{d}{\left(G\right)}$
and the theory of harmonic bundles (\cite{key-CO}, \cite{key-si}) gives an homeomorphism $\mathcal{M}_{B}^{d}{\left(G\right)}\cong\mathcal{M}_{Dol}^{d}{\left(G\right)}$.

When $\mathrm{gcd}{\left(d,n\right)}=1$, these moduli spaces are smooth
and their cohomology has been computed in several particular cases,
but mostly from the point of view of the Dolbeaut moduli space $\mathcal{M}_{Dol}^{d}{\left(G\right)}$.

Hitchin and Gothen computed the Poincaré polynomial for $G=\mathrm{SL}{\left(2,\mathbb{C}\right)},\,\mathrm{SL}{\left(3,\mathbb{C}\right)}$
respectively in \cite{key-HI} and \cite{key-GO} and their techniques have been improved
to compute the compactly supported Hodge polynomials (\cite{key-GHS}).  Recently, Schiffmann, Mozgovoy and Mellit computed the Betti numbers of $\mathcal{M}_{Dol}^{d}{\left(G\right)}$ for $G=\mathrm{GL}{\left(n,\mathbb{C}\right)}$ respectively in \cite{key-Sch}, \cite{key-MS} and \cite{key-Mel}.

Hausel and Thaddeus (\cite{key-HT}) gave a new perspective for the topological
study of these varieties giving the first non-trivial example of the
Strominger-Yau-Zaslow Mirror Symmetry (\cite{key-SYZ}) using the so called Hitchin
system (\cite{key-HIT}) for the Dolbeaut space. They conjectured also (and checked
for $G=\mathrm{SL}{\left(2,\mathbb{C}\right)},\,\mathrm{SL}{\left(3,\mathbb{C}\right)}$
using the results by Hitchin and Gothen) that a version of the topological
mirror symmetry holds, i.e., some Hodge numbers $h^{p,q}$ of $\mathcal{M}_{Dol}^{d}{\left(G\right)}$
and $\mathcal{M}_{Dol}^{d}{\left(G^{L}\right)}$, for $G$ and Langlands
dual $G^{L}$, agree. Very recently, Groechenig, Wyss and Ziegler proved the topological mirror symmetry for $G=\mathrm{SL}{\left(n,\mathbb{C}\right)}$ in \cite{key-GWZ}.

Contrarily to $\mathcal{M}_{DR}^{d}{\left(G\right)}$ and $\mathcal{M}_{Dol}^{d}{\left(G\right)}$
cases, the cohomology of $\mathcal{M}_{B}^{d}{\left(G\right)}$ does
not have a pure Hodge structure. This fact motivates the study of
\emph{$E$-polynomials} of the $G$-character varieties.
The $E$-polynomial of a variety $X$ is 
\[
E{\left(X;x,y\right)}:=H_{c}{\left(X;x,y,-1\right)}
\]
where
\[
H_{c}{\left(X;x,y,t\right)}:=\sum{h_{c}^{p,q;j}{\left(X\right)}x^{p}y^{q}t^{j}}
\]
the $h_{c}^{p,q;j}$ being the mixed Hodge numbers with compact support
of $X$ (\cite{key-1}, \cite{key-3}). When the $E$-polynomial only depends on $xy$, we write $E{\left(X;q\right)}$, meaning
\[
E{\left(X;q\right)}=H_{c}{\left(X;\sqrt{q},\sqrt{q},-1\right)}.
\]

Hausel and Rodriguez-Villegas started the computation of the $E$-polynomials
of $G$-character varieties for $G=\mathrm{GL}{\left(n,\mathbb{C}\right)},\,\mathrm{PGL}{\left(n,\mathbb{C}\right)}$
using arithmetic methods inspired on Weil conjectures. In \cite{key-5} they
obtained the $E$-polynomials of $\mathcal{M}_{B}^{d}{\left(\mathrm{GL}{\left(n,\mathbb{C}\right)}\right)}$.
Following this work, in \cite{key-6} Mereb computed the $E$-polynomials of
$\mathcal{M}_{B}^{d}{\left(\mathrm{SL}{\left(n,\mathbb{C}\right)}\right)}$.
He proved also that these polynomials are palindromic and monic.

Another direction of interest is the moduli space of parabolic bundles.
If $p_{1},\ldots,p_{s}$ are $s$ marked points in a Riemann surface
$\Sigma_{g}$ of genus $g$, and $\mathcal{C}_{i}\subseteq G$ semisimple
conjugacy classes for $i=1,\ldots,s$, the corresponding Betti moduli
space of parabolic representations (or \emph{parabolic $G$-character
variety}) is 
\[
\begin{aligned}
\mathcal{M}^{\mathcal{C}_{1},\ldots,\mathcal{C}_{s}}{\left(G\right)}:=\biggl\{&\left(A_{1},B_{1},\ldots,A_{g},B_{g},C_{1},\ldots,C_{s}\right)\in G^{2g+s}\mid \\
& \prod\limits_{\substack{i=1}}^{g}{\left[A_{i}:B_{i}\right]}\prod\limits_{\substack{j=1}}^{s}{C_{j}}=\mathrm{Id}_{G},\,C_{j}\in\mathcal{C}_{j},\,j=1,\ldots,s \biggr\} //G.
\end{aligned}
\]
In \cite{key-Si}, Simpson proved that this space is analytically isomorphic
to the moduli space of flat logarithmic $G$-connections and homeomorphic
to a moduli space of Higgs bundles with parabolic structures at $p_{1},\ldots,p_{s}$.

Hausel, Letellier and Rodriguez-Villegas (\cite{key-HLRV}) found formulae for
the $E$-polynomials of the parabolic character varieties for $G=\mathrm{GL}{\left(n,\mathbb{C}\right)}$
and generic semisimple $\mathcal{C}_{1},\ldots,\mathcal{C}_{s}$.

In this paper, we consider certain parabolic character varieties for
the group $G=\mathrm{Sp}{\left(2n,\mathbb{C}\right)}$. For a semisimple
element $\xi$ belonging to a conjugacy class $\mathcal{C}\subseteq\mathrm{Sp}{\left(2n,\mathbb{C}\right)}$,
we define 
\[
\begin{aligned}
\mathcal{M}_{n}^{\xi}:&=\left\{ \left(A_{1},B_{1},\ldots,A_{g},B_{g}\right)\in\mathrm{Sp}{\left(2n,\mathbb{C}\right)}^{2g}\mid\prod\limits_{\substack{i=1}}^{g}{\left[A_{i}:B_{i}\right]}=\xi\right\} //C_{\mathrm{Sp}{\left(2n,\mathbb{C}\right)}}{\left(\xi\right)}\\
&=\left\{ \left(A_{1},B_{1},\ldots,A_{g},B_{g}\right)\in\mathrm{Sp}{\left(2n,\mathbb{C}\right)}^{2g}\mid\prod\limits_{\substack{i=1}}^{g}{\left[A_{i}:B_{i}\right]}\in\mathcal{C}\right\} //\mathrm{Sp}{\left(2n,\mathbb{C}\right)}
\end{aligned}
\]
where $C_{\mathrm{Sp}{\left(2n,\mathbb{C}\right)}}{\left(\xi\right)}$ is the centralizer of $\xi$ in $\mathrm{Sp}{\left(2n,\mathbb{C}\right)}$. We assume that $\xi$ satisfies the genericity condition \ref{eq:2.1} below;
in particular, $\xi$ is a regular semisimple element, hence $C_{\mathrm{Sp}{\left(2n,\mathbb{C}\right)}}{\left(\xi\right)}=T\cong(\mathbb{C}^{\times})^{n}$,
the maximally split torus in $\mathrm{Sp}{\left(2n,\mathbb{C}\right)}$.
It turns out that $\mathcal{M}_{n}^{\xi}$ is a geometric quotient
and all the stabilisers are finite subgroups of $\boldsymbol{\mu}_{\boldsymbol{2}}^{n}$, the
group of diagonal symplectic involution matrices.

Our goal is to compute the $E$-polynomials of $\mathcal{M}_{n}^{\xi}$
for any genus $g$ and dimension $n$. This is accomplished by arithmetic
methods, following the work of Hausel and Rodriguez-Villegas in \cite{key-5}
and Mereb in \cite{key-6}. Our methods depends on the additive property of
the $E$-polynomial, which allows us to compute this polynomial using
stratifications (see \ref{rem:strata}).

The strategy to compute the $E$-polynomials of $\mathcal{M}_{n}^{\xi}$
is to construct a stratification of $\mathcal{M}_{n}^{\xi}$ and proving
that each stratum $\widetilde{\mathcal{M}}_{n,H}^{\xi}$, with $H$
varying on the set of the subgroups of $\boldsymbol{\mu}_{\boldsymbol{2}}^{n}$ (see Definition \ref{defn:stratum})
has polynomial count, i.e., there is a polynomial $E_{n,H}{\left(q\right)}\in\mathbb{Z}[q]$
such that $\left|\widetilde{\mathcal{M}}_{n,H}^{\xi}{\left(\mathbb{F}_{q}\right)}\right|=E_{n,H}{\left(q\right)}$
for sufficiently many prime powers $q$ in the sense described in
Section \ref{subsection:2.2}. According to Katz's theorem \ref{thm:7}, the $E$-polynomial of
$\widetilde{\mathcal{M}}_{n,H}^{\xi}$ agrees with the counting polynomial
$E_{n,H}$. By Theorem \ref{thm:2.19} below, one reduces to count
\begin{equation}
E_{n}{\left(q\right)}:=\frac{1}{\left(q-1\right)^{n}}\left|\left\{\left(A_{1},B_{1},\ldots,A_{g},B_{g}\right)\in\mathrm{Sp}{\left(2n,\mathbb{F}_{q}\right)}^{2g}\mid\prod\limits_{\substack{i=1}}^{g}{\left[A_{i}:B_{i}\right]}=\xi\right\}\right|\label{eq:counting}
\end{equation}
with $\mathbb{F}_{q}$ a finite field containing the eigenvalues of
$\xi$.
The number of solutions of an equation like \ref{eq:counting} is
given by a Frobenius-type formula involving certain values of the
irreducible characters $\chi$ of $\mathrm{Sp}{\left(2n,\mathbb{F}_{q}\right)}$
(see \ref{prop:frobenius}). Thanks to the formula \ref{eq:1.4} below for the evaluation
of irreducible characters of a finite group of Lie type on a regular semisimple element, the Frobenius
formula and Katz's theorem, we are able to compute $E_{n}{\left(q\right)}$,
hence the $E_{n,H}{\left(q\right)}$'s. Adding them up, we eventually
obtain the following
\begin{thm*}
The $E$-polynomial of $\mathcal{M}_{n}^{\xi}$ satisfies
\[
E{\left(\mathcal{M}_{n}^{\xi};q\right)}=E_{n}{\left(q\right)}=\frac{1}{\left(q-1\right)^{n}}\sum\limits_{\substack{\tau}}{\left(H_{\tau}{\left(q\right)}\right)^{2g-1}C_{\tau}}.
\]
\end{thm*}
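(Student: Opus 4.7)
The plan is to combine the arithmetic reductions already assembled in the paper with a character-theoretic evaluation of the count $E_{n}(q)$, and finish with a combinatorial simplification via M\"obius inversion on the poset of set-partitions. For the first step, the stratification $\mathcal{M}_{n}^{\xi}=\bigsqcup_{H}\widetilde{\mathcal{M}}_{n,H}^{\xi}$ indexed by subgroups $H\subseteq\boldsymbol{\mu}_{\boldsymbol{2}}^{n}$ (Definition \ref{defn:stratum}), together with the additivity of the $E$-polynomial (Remark \ref{rem:strata}), gives $E(\mathcal{M}_{n}^{\xi};q)=\sum_{H}E(\widetilde{\mathcal{M}}_{n,H}^{\xi};q)$. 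Each stratum has polynomial count, so Katz's theorem \ref{thm:7} identifies every $E(\widetilde{\mathcal{M}}_{n,H}^{\xi};q)$ with its counting polynomial $E_{n,H}(q)$. Theorem \ref{thm:2.19} packages these counts into the single quantity $E_{n}(q)$ defined in \ref{eq:counting}, giving $E(\mathcal{M}_{n}^{\xi};q)=E_{n}(q)$. It remains to establish the closed formula.

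The Frobenius-type formula \ref{prop:frobenius} rewrites the cardinality in \ref{eq:counting} as
\[
\lvert\mathrm{Sp}(2n,\mathbb{F}_{q})\rvert^{2g-1}\sum_{\chi}\frac{\chi(\xi)}{\chi(1)^{2g-1}},
\]
where the sum runs over the irreducible characters of $\mathrm{Sp}(2n,\mathbb{F}_{q})$. Since $\xi$ is regular semisimple, the character-value formula \ref{eq:1.4} from Deligne-Lusztig theory forces the contribution of all but a highly restricted class of irreducible characters to vanish. The surviving terms come from Deligne-Lusztig characters attached to maximal tori whose rational class is compatible with the semisimple geometric conjugacy class of $\xi$, and on those $\chi(\xi)$ reduces, up to sign, to a value of a Weyl-group character of the centralizer. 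Substituting and grouping the sum by the $W$-equivalence class of the data extracted from the eigenvalues of $\xi$ is the main technical step.

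The resulting sum is naturally indexed by decorated set-partitions $\tau$ of $\{1,\ldots,n\}$, each block recording a coincidence pattern among the eigenvalue data together with the signs imposed by the symplectic form. M\"obius inversion on the poset of set-partitions converts the sum over all refinements into a sum over coarsest types, producing for each $\tau$ a combinatorial coefficient $C_{\tau}$ (the M\"obius weight times the order of the type's automorphism group) and an arithmetic factor $H_{\tau}(q)^{2g-1}$ that collects the ratios $\lvert\mathrm{Sp}(2n,\mathbb{F}_{q})\rvert/\chi(1)$ across all characters sharing type $\tau$. Dividing by $(q-1)^{n}$ as prescribed in \ref{eq:counting} yields the announced identity.

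The main obstacle is the character-evaluation in the second step. Because $\mathrm{Sp}(2n)$ is not self-Langlands-dual, the Jordan decomposition of its characters lives naturally over the dual group $\mathrm{SO}(2n+1)$, whose semisimple centralizers are products of general linear, unitary, and odd-orthogonal factors. Tracking the twisted versus split Levi contributions, keeping the correct signs from Deligne-Lusztig induction, and ensuring that after M\"obius inversion the arithmetic pieces assemble into a single polynomial $H_{\tau}(q)$ in $q$ with no spurious denominators, is substantially more intricate than the analogous $\mathrm{GL}_{n}$ computation carried out by Hausel and Rodriguez-Villegas.
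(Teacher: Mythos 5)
Your outline reproduces the paper's high-level strategy — stratify by the stabiliser subgroups $H\leq\boldsymbol{\mu}_{\boldsymbol{2}}^{n}$, show each stratum has polynomial count, invoke Katz, add up, reduce to $E_{n}(q)=N_{n}^{\xi}(q)/(q-1)^{n}$ via the Frobenius formula, evaluate $\chi(\xi)$ using Deligne--Lusztig and close with M\"obius inversion on the partition lattice. Up to some hand-waving in the reduction (it is not Theorem~\ref{thm:2.19} per se but the telescoping of the M\"obius inversion on the subgroup lattice, i.e.\ equations~\ref{eq:3.1.3}--\ref{eq:3.1.4} together with $\mathcal{U}_{n,\boldsymbol{Z}}^{\xi}=\mathcal{U}_{n}^{\xi}$, that collapses $\sum_{H}E_{n,H}(q)$ to $E_{n}(q)$), the first two paragraphs are on track.

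The final paragraph, however, identifies an ``obstacle'' that does not exist in the paper's argument and signals a real misunderstanding of why the computation goes through. You anticipate having to handle Lusztig's Jordan decomposition over the dual group $\mathrm{SO}(2n+1)$, with twisted versus split Levi subgroups, unitary factors, and sign bookkeeping. None of this is needed, precisely because $\xi$ is chosen regular semisimple inside the \emph{maximally split} torus $T$. Formula~\ref{eq:1.4} then forces the only surviving $\chi$ to be the \emph{principal series} constituents of $R_{T}^{G}(\theta)$ for $\theta\in\widehat{T^{F}}$ in the $W_{n}$-orbit of a character of the normal form~\ref{eq:1.7}. Their degrees and multiplicities are controlled by the Howlett--Kilmoyer/Curtis--Iwahori--Kilmoyer theory of generic Hecke algebras (Propositions~\ref{prop:18}--\ref{prop:20-1}, Remarks~\ref{rem:26}--\ref{rem:27}), which is combinatorial and entirely avoids the Langlands-dual machinery. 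In particular, the polynomials $H_{\tau}(q)$ are assembled from Poincar\'e polynomials of Coxeter groups of types $A$, $B$, $D$ and generic degrees, with integrality guaranteed by Remark~\ref{rem:21-1}, not by any delicate cancellation of signs from twisted Levis. Similarly, your gloss of $C_{\tau}$ as ``the M\"obius weight times the order of the type's automorphism group'' is not accurate: the actual coefficient~\ref{eq:3.1.14} is
\[
C_{\tau}=\frac{(-1)^{l}\,l!\,\bigl[W_{n}:S_{\lambda,\alpha_{1},\alpha_{\epsilon}}\bigr]\,\beta(1)}{\prod_{i}m_{i}(\lambda)!},
\]
and the M\"obius inversion enters through the specialised identity~\ref{eq:2} on $\Pi_{c}$ applied to the products of the quantities $\gamma_{km_{s}}$, not as a bare M\"obius weight. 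So while the skeleton is right, the proposal misdiagnoses where the technical content lies and would lead you down a substantially harder path than the one the paper actually takes.
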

Here, $H_{\tau}{\left(q\right)}$ are polynomials with integer coefficients
(see \ref{eq:1.12}), $C_{\tau}$ are integer constants and the sum is over a well described set (see \ref{defn:type}).
It is remarkable that the $E$-polynomial of $\mathcal{M}_{n}^{\xi}$
does not depend on the choice of the generic element $\xi$. 
A direct consequence of our calculation and of the fact that $\mathcal{M}_{n}^{\xi}$ is equidimensional (see Corollary \ref{cor:2.14}) is the following
\begin{cor*}
The $E$-polynomial of $\mathcal{M}_{n}^{\xi}$ is palindromic
and monic. In particular, the parabolic character variety $\mathcal{M}_{n}^{\xi}$
is connected.
\end{cor*}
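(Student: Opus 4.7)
The plan is to read off palindromicity and monicity directly from the explicit formula of the Theorem above, and then to deduce connectedness from monicity together with the equidimensionality statement of Corollary~\ref{cor:2.14}.

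\textbf{Monicity.} By the formula (\ref{eq:1.12}), each $H_{\tau}(q)$ is a product of cyclotomic-type factors of the shape $q^{a}\pm 1$ (possibly multiplied by $q$-powers), so $\deg_{q} H_{\tau}$ is an explicit linear function of the combinatorial data of $\tau$. The first task is to identify the unique type $\tau_{0}$ maximising $(2g-1)\deg H_{\tau}(q)$, verify that after division by $(q-1)^{n}$ the resulting leading term has degree equal to $\dim_{\mathbb{C}}\mathcal{M}_{n}^{\xi} = (2g-1)(2n^{2}+n) - n$, and check that its leading coefficient is $+1$. The main issue here is to rule out competition or cancellation between $\tau_{0}$ and other types at the top degree, which should follow from a direct monomial count in (\ref{eq:1.12}).

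\textbf{Palindromicity.} Each building block $q^{a}-1$ enjoys the symmetry $q^{a}(q^{-a}-1) = -(q^{a}-1)$, so every $H_{\tau}(q)$ obeys a functional equation
\[
q^{e_{\tau}}H_{\tau}(1/q) = \varepsilon_{\tau}\, H_{\tau}(q)
\]
for some explicit $e_{\tau}\in\mathbb{Z}$ and sign $\varepsilon_{\tau}\in\{\pm 1\}$ depending only on the combinatorics of $\tau$. Raising to the $(2g-1)$-th power and then multiplying by $C_{\tau}/(q-1)^{n}$, one has to check that the shift exponent $(2g-1)e_{\tau}-n$ is independent of $\tau$ (necessarily equal to $\dim\mathcal{M}_{n}^{\xi}$ by the monicity step) and that the signs $\varepsilon_{\tau}^{2g-1}$ combine coherently with the constants $C_{\tau}$ so that the entire sum transforms by a single global sign under $q\leftrightarrow 1/q$. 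This yields $q^{\dim\mathcal{M}_{n}^{\xi}} E(\mathcal{M}_{n}^{\xi};1/q) = E(\mathcal{M}_{n}^{\xi};q)$, i.e.\ palindromicity.

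\textbf{Connectedness.} By Corollary~\ref{cor:2.14}, $\mathcal{M}_{n}^{\xi}$ is equidimensional of dimension $d$, and since it is realised as a geometric quotient of a smooth variety by a reductive group with finite stabilisers contained in $\boldsymbol{\mu}_{\boldsymbol{2}}^{n}$, it has at worst finite quotient singularities and is in particular normal. For such a variety, the coefficient of $q^{d}$ in $E(\mathcal{M}_{n}^{\xi};q)$ equals the number of top-dimensional irreducible components. Monicity forces this number to be $1$, and equidimensionality then upgrades this to irreducibility of $\mathcal{M}_{n}^{\xi}$; hence $\mathcal{M}_{n}^{\xi}$ is connected.

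\textbf{Main obstacle.} The most delicate step will be palindromicity: the sum is indexed by the combinatorially intricate set of types described in \ref{defn:type}, and the constants $C_{\tau}$ carry signs arising both from the Möbius inversion on the poset of set-partitions and from the Deligne--Lusztig character values on regular semisimple elements of $\mathrm{Sp}(2n,\mathbb{F}_{q})$. Aligning the shift exponents $e_{\tau}$ and the signs $\varepsilon_{\tau}^{2g-1} C_{\tau}$ coherently across all $\tau$ is \emph{a priori} non-obvious and will require careful bookkeeping of the combinatorial input to (\ref{eq:1.12}).
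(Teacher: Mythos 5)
The connectedness step and the high-level plan for monicity are sound and match the paper's reasoning (Corollaries~\ref{cor:pal} and \ref{cor:con}): once you know the leading coefficient of $E_{n}(q)$ is $1$ and that $\mathcal{M}_{n}^{\xi}$ is a quotient of the smooth equidimensional $\mathcal{U}_{n}^{\xi}$ by a torus with finite stabilisers, Remark~\ref{rem:propr} gives connectedness, and the unique dominant type is the one of the trivial character (Remark~\ref{rem:28} supplies the ``no competition'' fact you flag but do not prove).

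However, the palindromicity step contains a genuine gap: your ansatz that each $H_{\tau}(q)$ individually satisfies $q^{e_{\tau}}H_{\tau}(1/q)=\varepsilon_{\tau}H_{\tau}(q)$ is false. The factor $|\mathrm{Sp}(2n,\mathbb{F}_{q})|$ in~\ref{eq:1.12} does behave well under $q\mapsto 1/q$, but the denominator $\chi_{\tau}(1)$ does not return to itself; generic degrees $d_{\varphi}(q)$ transform into $d_{\varepsilon\varphi}(q)$ under $q\mapsto 1/q$, where $\varepsilon$ is the sign character. The correct functional equation, recorded in Remark~\ref{rem:29} (equation~\ref{eq:1.13}), is
\[
H_{\tau}(q^{-1})=\frac{(-1)^{n}}{q^{n(2n+1)}}\,H_{\tau^{\prime}}(q),
\]
where $\tau^{\prime}=(\lambda,\alpha_{1},\alpha_{\epsilon},\varepsilon\beta)$ is the \emph{dual type} of Definition~\ref{defn:type}. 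The sum in~\ref{eq:3.3.3} is therefore palindromic not term-by-term but because the involution $\tau\leftrightarrow\tau^{\prime}$ permutes the index set, the shift exponent $n(2n+1)$ is the same for every $\tau$, and $C_{\tau}=C_{\tau^{\prime}}$ (immediate from~\ref{eq:3.1.14}, since $\beta(1)=(\varepsilon\beta)(1)$). Without introducing the type-duality $\tau\mapsto\tau^{\prime}$, the bookkeeping you describe under ``Main obstacle'' cannot close, because there is no consistent choice of $\varepsilon_{\tau}$ making each summand self-symmetric. This is the key structural ingredient your proposal is missing.
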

\noindent Our formula also implies
\begin{cor*}
The Euler characteristic $\chi{\left(\mathcal{M}_{n}^{\xi}\right)}$
of $\mathcal{M}_{n}^{\xi}$ vanishes for $g>1$. For
$g=1$, we have 
\[
\sum\limits_{\substack{n\geq 0}}{\frac{\chi{\left(\mathcal{M}_{n}^{\xi}\right)}}{2^{n}n!}T^{n}}=\prod\limits_{\substack{k\geq 1}}{\frac{1}{\left(1-T^{k}\right)^{3}}}.
\]
\end{cor*}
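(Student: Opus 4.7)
The plan is to combine the main theorem of the paper with the identity $\chi(X) = E(X;1)$ that holds whenever the $E$-polynomial depends only on $q = xy$. Writing the main formula as
\[
E(\mathcal{M}_n^\xi;q) = \frac{1}{(q-1)^n}\sum_\tau H_\tau(q)^{2g-1} C_\tau,
\]
the task reduces to analysing the behaviour of the right-hand side at $q=1$.

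The first step is to determine the order of vanishing of each $H_\tau(q)$ at $q=1$. Since (1.12) is built from Deligne--Lusztig character values on the maximally split torus $T \cong (\mathbb{F}_q^\times)^n$ of order $(q-1)^n$, one expects each $H_\tau$ to be divisible by $(q-1)^n$; equivalently, $H_\tau(q) = (q-1)^n \widetilde{H}_\tau(q)$ for some polynomial $\widetilde{H}_\tau$. Granting this, for $g > 1$ the numerator $\sum_\tau H_\tau(q)^{2g-1} C_\tau$ vanishes to order at least $(2g-1)n \geq 3n$ at $q = 1$, which strictly exceeds $n$, so
\[
\chi(\mathcal{M}_n^\xi) = \lim_{q \to 1} \frac{1}{(q-1)^n}\sum_\tau H_\tau(q)^{2g-1} C_\tau = 0,
\]
proving the first claim.

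For $g = 1$, the numerator becomes $\sum_\tau H_\tau(q) C_\tau$ and the above analysis yields the closed form $\chi(\mathcal{M}_n^\xi) = \sum_\tau \widetilde{H}_\tau(1) C_\tau$, reducing the problem to a purely combinatorial identity. The types $\tau$ are indexed by partition-theoretic data natural for the symplectic Weyl group $W \cong (\mathbb{Z}/2)^n \rtimes S_n$ of order $2^n n!$, which accounts for the normalisation $1/(2^n n!)$ in the generating series. The plan is then to substitute this into $\sum_{n \geq 0} \chi(\mathcal{M}_n^\xi)\, T^n/(2^n n!)$ and regroup by cycle/partition type, applying the exponential formula to turn the sum over combinatorial types into an infinite product indexed by positive integers $k$.

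The principal obstacle is to recognise the resulting product as $\prod_{k \geq 1}(1 - T^k)^{-3}$. The exponent $3$ must emerge from a specific threefold combinatorial structure in the types; a plausible source is the splitting of each cycle into three independent families (for instance positive, negative, and paired eigenvalues, natural in the signed symmetric group), which would contribute a factor of $(1-T^k)^{-1}$ apiece. Once the precise bookkeeping of $\widetilde{H}_\tau(1) C_\tau$ per type is set up, the identity should follow by MacMahon-type manipulations of cycle-index series; establishing this bookkeeping, and checking that no cancellation occurs beyond what is recorded in the product, will be the main technical burden.
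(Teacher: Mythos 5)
Your treatment of the case $g>1$ is sound and essentially the paper's argument: $(q-1)^n$ divides $H_\tau(q)=|\mathrm{Sp}(2n,\mathbb{F}_q)|/\chi_\tau(1)$ (since $(q-1)^n$ divides the group order and $\gcd(q-1,\chi_\tau(1))=1$), so $H_\tau(q)^{2g-1}/(q-1)^n$ vanishes to order at least $n(2g-2)>0$ at $q=1$.

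For $g=1$, however, your proposal stops short of a proof. You correctly reduce to evaluating $\sum_\tau \widetilde H_\tau(1)\,C_\tau$, but the decisive step --- actually organizing the sum over types and producing the generating function --- is left as ``the main technical burden,'' and your heuristic for the exponent $3$ (``positive, negative, and paired eigenvalues'') is not where it comes from. In the paper, $\widetilde H_\tau(1)$ is computed explicitly as $|W_n|/\bigl([W_n:S_{\lambda,\alpha_1,\alpha_\epsilon}]\,\beta(1)\bigr)$; after multiplying by the value of $C_\tau$ from \ref{eq:3.1.14} and summing over $\beta$ (which replaces $\sum_\beta 1$ by $|\mathrm{Irr}(S_{\lambda,\alpha_1,\alpha_\epsilon})|$), the sum factors as a Cauchy product $\sum_c a_c b_{n-c}$. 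The generating function $\sum_c a_c T^c$ telescopes to $\prod_{k\ge1}(1-T^k)$ via an Euler identity, while $\sum_m b_m T^m=\prod_{k\ge1}(1-T^k)^{-4}$ comes from the two factors $W_{\alpha_1}$ and $W_{\alpha_\epsilon}$, each a hyperoctahedral group whose irreducible characters are indexed by pairs of partitions, hence each contributing $\prod_{k\ge1}(1-T^k)^{-2}$. The exponent $3=4-1$ is the net of these two effects, not a threefold splitting of cycles. To complete your argument you would need to carry out exactly this bookkeeping: compute $\widetilde H_\tau(1)$ via \ref{eq:3.3.6}, combine with \ref{eq:3.1.14}, sum over $\beta$, factor the type sum multiplicatively, and identify the two generating functions.
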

\noindent
See Corollaries \ref{cor:Euler}, \ref{cor:pal}, \ref{cor:con} and \ref{cor:char} for details.

\noindent For $n=1$, the formula looks like:
\[
\begin{aligned}
E{\left(\mathcal{M}_{1}^{\xi};q\right)}=&\left(q^{3}-q\right)^{2g-2}\left(q^{2}+q\right)+\left(q^{2}-1\right)^{2g-2}\left(q+1\right)\\
&+\left(2^{2g}-2\right)\left(q^{2}-q\right)^{2g-2}q.
\end{aligned}
\]
This result recovers the ones obtained by Logares, Muñoz
and Newstead in \cite{key-LOG} for small genus $g$ and by Martinez and Muñoz
in \cite{key-15} for all possible $g$.

The present article is organized as follows: In Section \ref{section:2}, we go
over the basics of combinatorics and representation theory that are
going to be needed. In Section \ref{section:3}, we study the geometry of the
parabolic character varieties to be studied. In Section \ref{section:4}, we perform
the computation of the $E$-polynomial of $\mathcal{M}_{n}^{\xi}$
and prove the corollaries concerning the topological properties of
$\mathcal{M}_{n}^{\xi}$ encoded in $E{\left(\mathcal{M}_{n}^{\xi};q\right)}$.
\newline

\noindent\textbf{Acknowledgements}. The author thanks Fernando Rodriguez-Villegas for his invaluable help and for supervising this work, Emmanuel Letellier, Martin Mereb and Meinolf Geck for useful conversations and discussions, C\'edric Bonnaf\'e for helpful email correspondence. 

\section{Preliminaries}
\label{section:2}
\subsection{Mixed Hodge structures}

Motivated by the (then still unproven) Weil Conjectures and Grothendieck's
``yoga of weights'', which drew cohomological conclusions about
complex varieties from the truth of those conjectures, Deligne in \cite{key-1} and \cite{key-3} proved the existence of \emph{mixed Hodge structures} on the cohomology of a complex algebraic variety.
\begin{prop}
\label{prop:Proposition 1}\emph{(\cite{key-1}, \cite{key-3})}. Let $X$ be a complex algebraic variety.
For each $j$ there is an increasing weight filtration
\begin{equation}
0=W_{-1}\subseteq W_{0}\subseteq\cdots\subseteq W_{2j}=H^{j}{\left(X,\mathbb{Q}\right)}
\end{equation}
and a decreasing Hodge filtration
\begin{equation}
H^{j}{\left(X,\mathbb{C}\right)}=F^{0}\supseteq F^{1}\supseteq\cdots\supseteq F^{m}\supseteq F^{m+1}=0
\end{equation}
such that the filtration induced by $F$ on the complexification of
the graded pieces $Gr_{l}^{W}:=W_{l}/W_{l-1}$ of the weight filtration
endows every graded piece with a pure Hodge structure of weight $l$,
or equivalently, for every $0\leq p\leq l$, we have 
\begin{equation}
Gr_{l}^{W^{\mathbb{C}}}=F^{p}Gr_{l}^{W^{\mathbb{C}}}\oplus\overline{F^{l-p+1}Gr_{l}^{W^{\mathbb{C}}}}.
\end{equation}
\end{prop}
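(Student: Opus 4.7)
The plan is to follow Deligne's original argument from Hodge II and Hodge III in three layers, extending the Hodge structure from the most restrictive class of varieties to the general one, and then proving independence of the auxiliary choices.

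First, for a smooth projective $X$, classical Hodge theory provides a pure Hodge decomposition $H^j(X,\mathbb{C})=\bigoplus_{p+q=j}H^{p,q}$ via harmonic forms with respect to a K\"ahler metric. One sets $W_{j-1}=0$ and $W_j=H^j(X,\mathbb{Q})$, and defines $F^p=\bigoplus_{p'\geq p}H^{p',j-p'}$; these filtrations satisfy the decomposition in the statement trivially, so $H^j(X)$ carries a pure Hodge structure of weight $j$. Next, for a smooth quasi-projective $U$, I would invoke Hironaka's resolution of singularities to choose a smooth compactification $j\colon U\hookrightarrow X$ with $D:=X\setminus U$ a simple normal crossings divisor. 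The logarithmic de Rham complex $\Omega^{\bullet}_X(\log D)$ has hypercohomology isomorphic to $H^{*}(U,\mathbb{C})$. On it one defines the Hodge filtration $F$ as the b\^ete (stupid) filtration $\sigma_{\geq p}\Omega^{\bullet}_X(\log D)$ and the weight filtration $W$ by the order of poles along $D$. The graded pieces $\mathrm{Gr}_{k}^{W}\Omega^{\bullet}_X(\log D)$ are identified via Poincar\'e residues with complexes on the deeper strata of $D$ (themselves smooth projective), so the degeneration of the Hodge-to-de Rham spectral sequence on those strata feeds the purity statement on $\mathrm{Gr}^{W}_{k}H^{j}(U)$.

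For a general complex algebraic variety $X$, I would next construct a smooth simplicial hyperresolution $\varepsilon\colon X_{\bullet}\to X$, with each $X_{n}$ smooth (and, after a further compactification, proper). The associated descent spectral sequence has $E_{1}^{p,q}=H^{q}(X_{p},\mathbb{Q})$ converging to $H^{p+q}(X,\mathbb{Q})$; each $E_{1}$-term carries a mixed Hodge structure by the previous step, and the simplicial differentials are compatible with both filtrations. Inducing $W$ and $F$ on the abutment (with the total weight filtration shifted by the simplicial degree) gives candidates for the filtrations on $H^{j}(X,\mathbb{Q})$, and one verifies the purity property of each $\mathrm{Gr}^{W}_{l}$ by tracking the spectral sequence and using strictness of morphisms of mixed Hodge structures.

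The main obstacle, as usual in Deligne's setup, is not existence but \emph{well-definedness and functoriality}: different choices of compactification, of the normal crossings divisor, or of the hyperresolution must produce canonically isomorphic mixed Hodge structures, and pullbacks along morphisms $f\colon X\to Y$ must respect $W$ and $F$. The key input for this is the strictness theorem --- any morphism of mixed Hodge structures is strict with respect to both the weight and Hodge filtrations --- combined with a comparison lemma between any two hyperresolutions via a common refinement. Once this is established, Proposition~\ref{prop:Proposition 1} follows, and the functorial MHS so obtained is the one used throughout the rest of the paper to define and manipulate the $E$-polynomial.
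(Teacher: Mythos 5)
The paper does not give an argument for this proposition: it is quoted as Deligne's theorem from Hodge~II and Hodge~III (\cite{key-1}, \cite{key-3}), so there is no in-text proof to match against. Your sketch is a faithful outline of Deligne's actual strategy — classical Hodge theory for smooth projective $X$; the logarithmic de Rham complex $\Omega^{\bullet}_{\overline{X}}(\log D)$ with the b\^ete filtration as $F$ and the pole-order filtration as $W$ for smooth quasi-projective $X$, with Poincar\'e residues reducing $\mathrm{Gr}^{W}$ to the smooth projective strata; simplicial (or cubical) hyperresolutions and cohomological descent for general $X$; and strictness of morphisms of MHS to deliver independence of all the auxiliary choices. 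Two technical points that your phrasing glosses over are worth noting, because they are where the real work lies in Deligne's papers. First, the filtration on hypercohomology induced by pole order is not yet the weight filtration; one must apply a d\'ecalage (a shift by the cohomological degree $j$), which is exactly what makes the normalization $W_{j-1}H^{j}=0$, $W_{2j}H^{j}=H^{j}$ in the statement come out correctly and $\mathrm{Gr}^{W}_{l}$ pure of weight $l$. Second, ``tracking the spectral sequence'' in the descent step is formalized by Deligne's notion of a cohomological mixed Hodge complex, whose defining degenerations ($F$-spectral sequence at $E_{1}$, $W$-spectral sequence at $E_{2}$) are precisely what propagate purity of the weight-graded pieces through the $E_{1}^{p,q}=H^{q}(X_{p},\mathbb{Q})$ page; without this axiom the inductive purity claim does not follow. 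With those two ingredients made explicit, your outline is the standard and correct route to the cited result.
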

This mixed Hodge structure of $X$ respects most operations
in cohomology, like maps $f^{\ast}:H^{\ast}{\left(Y,\mathbb{Q}\right)}\rightarrow H^{\ast}{\left(X,\mathbb{Q}\right)}$
induced by a morphism of varieties $f:X\rightarrow Y$, maps induced
by field automorphisms $\sigma\in\mathrm{Aut}{\left(\mathbb{C}/\mathbb{Q}\right)}$,
the Künneth isomorphism
\begin{equation}
H^{\ast}{\left(X\times Y,\mathbb{Q}\right)}\cong H^{\ast}{\left(X,\mathbb{Q}\right)}\otimes H^{\ast}{\left(Y,\mathbb{Q}\right)},
\end{equation}
cup products, etc.

Using Deligne's construction \cite[8.3.8]{key-3} of mixed Hodge structure on relative
cohomology, one can define (\cite{key-2}) a well-behaved mixed Hodge structure
on compactly supported cohomology $H_{c}^{\ast}{\left(X,\mathbb{Q}\right)}$,
compatible with Poincaré duality for smooth connected $X$ (see also \cite{key-4}).
\begin{defn}
Define the \emph{compactly supported mixed Hodge numbers} by
\begin{equation}
h_{c}^{p,q;j}{\left(X\right)}:=\mathrm{dim}_{\mathbb{C}}{\left(Gr_{p}^{F}Gr_{p+q}^{W^{\mathbb{C}}}H_{c}^{j}{\left(X,\mathbb{C}\right)}\right)}.
\end{equation}
Form the \emph{compactly supported mixed Hodge polynomial}:
\begin{equation}
H_{c}{\left(X;x,y,t\right)}:=\sum\limits_{\substack{p,q,j}}{h_{c}^{p,q;j}{\left(X\right)}x^{p}y^{q}t^{j}}
\end{equation}
and the $E$-\emph{polynomial} of $X$:
\begin{equation}
E{\left(X;x,y\right)}:=H_{c}{\left(X;x,y,-1\right)}.
\end{equation}
\end{defn}
\begin{rem}
\label{rem:3}By definition, we can deduce the following properties
of the $E$-polynomial $E{\left(X;x,y\right)}$ of an algebraic variety
$X$:
\begin{itemize}
\item $E{\left(X;1,1\right)}=\chi{\left(X\right)}$, the Euler characteristic
of $X$.
\item The total degree of $E{\left(X;x,y\right)}$ is twice the dimension
of $X$ as a complex algebraic variety.
\item If $X$ is a smooth algebraic variety, the coefficient of $x^{\mathrm{dim}{\left(X\right)}}y^{\mathrm{dim}{\left(X\right)}}$ in
$E{\left(X;x,y\right)}$ is the number of the highest dimensional connected
components of $X$. 
\end{itemize}
\end{rem}
\begin{rem}
\label{rem:strata}
If $\left\{Z_{i}\right\} _{i=1,\ldots,n}$ is a stratification of
an algebraic variety $X$, i.e., a finite partition of $X$ into the locally closed subsets $Z_{i}$, then 
\begin{equation}
E{\left(X;x,y\right)}=\sum\limits_{\substack{i=1}}^{n}{E{\left(Z_{i};x,y\right)}}
\end{equation}
i.e., the $E$-polynomial is additive under stratifications. 
\end{rem}

\subsection{Spreading out and Katz's theorem}
\label{subsection:2.2}

Sometimes, the $E$-polynomial could be calculated using arithmetic
algebraic geometry. The setup is the following.
\begin{defn}
Let $X$ be a complex algebraic variety, $R$ a finitely generated
$\mathbb{Z}$-algebra, $\phi:\,R\hookrightarrow\mathbb{C}$ a fixed
embedding. We say that a separated scheme $\mathfrak{X}/R$ is a \emph{spreading
out} of $X$ if its extension of scalars $\mathfrak{X}_{\phi}$ is
isomorphic to $X$. 
\end{defn}

\begin{defn}
Suppose that a complex algebraic variety $X$ has a spreading out
$\mathfrak{X}$ such that for every ring homomorphism $\psi:\,R\rightarrow\mathbb{F}_{q}$,
the number of points of $\mathfrak{X}_{\psi}{\left(\mathbb{F}_{q}\right)}$
is given by $P_{X}{\left(q\right)}$ for some fixed $P_{X}{\left(t\right)}\in\mathbb{Z}[t]$.
We say that $X$ is a \emph{polynomial count variety} and that $P_{X}$
is the counting polynomial.
\end{defn}
Then we have the following fundamental result:
\begin{thm}
\label{thm:7}
\emph{(\cite[Katz (2.18)]{key-5})}. Let $X$ be a variety
over $\mathbb{C}$. Assume $X$ is polynomial count with counting
polynomial $P_{X}{\left(t\right)}\in\mathbb{Z}[t]$, then
the $E$-polynomial of $X$ is given by $E{\left(X;x,y\right)}=P_{X}{\left(xy\right)}$.
\end{thm}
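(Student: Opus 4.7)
The plan is to express the point count via the Grothendieck--Lefschetz trace formula in $\ell$-adic cohomology and then use Deligne's theory of weights, together with the comparison isomorphism between $\ell$-adic and singular cohomology, to translate information about Frobenius eigenvalues into information about the mixed Hodge structure of $H_{c}^{\ast}{\left(X,\mathbb{C}\right)}$.

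First, I would fix a spreading out $\mathfrak{X}/R$ of $X$ and, for ring homomorphisms $\psi:R\to\mathbb{F}_{q}$ avoiding the finitely many bad primes, choose a prime $\ell$ coprime to the characteristic and apply the trace formula
\[
\left|\mathfrak{X}_{\psi}{\left(\mathbb{F}_{q}\right)}\right|=\sum_{i}{\left(-1\right)^{i}\mathrm{Tr}{\left(\mathrm{Frob}_{q}\mid H_{c}^{i}{\left(\mathfrak{X}_{\bar{\psi}},\mathbb{Q}_{\ell}\right)}\right)}}.
\]
By the polynomial count hypothesis, this expression equals $P_{X}{\left(q\right)}$ for sufficiently many pairs $(\psi,q)$.

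Second, I would invoke Deligne's weight theorem: each Frobenius eigenvalue on $H_{c}^{i}$ is an algebraic integer all of whose complex conjugates have absolute value $q^{w/2}$ for some integer weight $w\leq i$, and the weight filtration on $\ell$-adic cohomology matches, via the comparison isomorphism after base-change to $\mathbb{C}$, the weight filtration on the mixed Hodge structure of $H_{c}^{\ast}{\left(X,\mathbb{C}\right)}$. Because the trace identity holds for infinitely many prime powers, a Galois and linear-algebra argument that separates eigenvalues by weight (and then by argument within each weight) forces every Frobenius eigenvalue to be a genuine integer power $q^{k}$; equivalently, the mixed Hodge structure of $X$ is of Hodge--Tate type, so $h_{c}^{p,q;j}{\left(X\right)}=0$ whenever $p\neq q$.

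Once Hodge--Tate purity is in place, $E{\left(X;x,y\right)}$ depends only on the product $xy$, and matching the multiplicity of $q^{k}$ among the signed Frobenius eigenvalues with $\sum_{j}{\left(-1\right)^{j}h_{c}^{k,k;j}{\left(X\right)}}$ yields the desired identity $E{\left(X;x,y\right)}=P_{X}{\left(xy\right)}$. The main obstacle is the middle step: extracting pointwise purity and the Hodge--Tate conclusion from the scalar identity for the point count. This relies on Deligne's integrality result for Frobenius eigenvalues together with the fact that the spreading out provides enough Frobenius data to separate eigenvalues by both argument and weight simultaneously; this is the technical heart of Katz's appendix.
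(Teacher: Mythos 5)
The paper does not actually prove this statement; Theorem \ref{thm:7} is cited directly from Katz's appendix to \cite{key-5}, so there is no in-paper argument to compare yours against. Measured against Katz's actual argument, your outline has the right opening moves (spreading out, the Grothendieck--Lefschetz trace formula, Deligne's weight theory and the comparison between Betti and $\ell$-adic weight filtrations) but the middle step contains a genuine gap. From the identity $P_{X}{\left(q\right)}=\sum_{i}{\left(-1\right)^{i}\mathrm{Tr}{\left(\mathrm{Frob}_{q}\mid H_{c}^{i}\right)}}$, even knowing it for all admissible $q$, one can only constrain the \emph{signed} multiset of Frobenius eigenvalues; an eigenvalue $\alpha\neq q^{k}$ occurring with equal multiplicity in adjacent cohomological degrees is invisible to the alternating sum, so the claim that ``every Frobenius eigenvalue is a genuine integer power $q^{k}$'' is not a consequence of the hypothesis. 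Moreover, the asserted equivalence between that eigenvalue statement and the mixed Hodge structure of $X$ being of Hodge--Tate type is false as a formal deduction: passing from Frobenius eigenvalues to the Hodge bigrading would require a substantial motivic input (a case of the Tate or Mumford--Tate conjecture), which is nowhere near the elementary weight comparison you invoke.

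Fortunately, Katz does not need, and does not prove, Hodge--Tate purity. The $E$-polynomial only records the virtual Hodge numbers $e^{p,q}:=\sum_{j}{\left(-1\right)^{j}h_{c}^{p,q;j}{\left(X\right)}}$, and the content of the theorem is that $e^{p,q}=0$ for $p\neq q$ while $e^{k,k}$ equals the coefficient of $t^{k}$ in $P_{X}$. This is a statement in the Grothendieck group of mixed Hodge structures, not about the individual $h_{c}^{p,q;j}$. Katz's argument works at this coarser level, combining the trace formula, Deligne's bounds on Frobenius weights, and the compatibility of weight filtrations under the comparison isomorphism to pin down the weight-graded pieces of the virtual class, without ever asserting anything pointwise about eigenvalues or about the Hodge decomposition within a fixed weight. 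If you replace your claims of ``all eigenvalues are $q^{k}$'' and ``Hodge--Tate'' with the correct target -- vanishing of the off-diagonal virtual Hodge numbers -- and rework the weight-separation step to operate on the signed multiset rather than on individual eigenvalues, your outline becomes a faithful sketch of the actual proof.
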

In this case, and more generally, when the $E$-polynomial only depends on $xy$, we write
\[
E{\left(X;q\right)}:=E{\left(X;\sqrt{q},\sqrt{q}\right)}.
\]
\begin{rem}
\label{rem:propr}
By \ref{rem:3}, for a variety $X$ whose $E$-polynomial is given by $P_{X}{\left(q\right)}$, the Euler characteristic of $X$ is equal
to $P_{X}{\left(1\right)}$, while if $X$ is smooth, the leading coefficient of $P_{X}{\left(q\right)}$
is the number of highest dimensional connected components of $X$.
\end{rem}

\begin{rem}
Informally, Katz's theorem says that if we can count the number of
solutions of the equations defining our variety over $\mathbb{F}_{q}$,
and this number turns out to be some universal polynomial evaluated
in $q$, then this polynomial determines the $E$-polynomial of the
variety.
\end{rem}
Actually, it is enough for this to be true not necessarily for all
finite fields. In fact, one can restrict the computations via a suitable
choice of the finitely generated $\mathbb{Z}$-algebra which a spreading
out of the variety $X$ is defined over.
\begin{example}
(\cite[Example 2.4]{key-6}). Let us take the affine curve 
\[
C=\left\{ \left(x,y\right)\in\mathbb{C}^{2}\mid2x^{2}+3y^{2}=5\right\} .
\]
We want a scheme $\mathcal{X}$ defined by the same equation over
a finitely generated $\mathbb{Z}$-algebra. It is easy to check that
$\left|C{\left(\mathbb{F}_{p}\right)}\right|=p-\left(\frac{-6}{p}\right)$
for $p\gneq5$ prime, where $\left(\frac{-6}{p}\right)$ is the Legendre
symbol, and that $\left|C{\left(\mathbb{F}_{2}\right)}\right|=2$, $\left|C{\left(\mathbb{F}_{3}\right)}\right|=6$
and $\left|C{\left(\mathbb{F}_{5}\right)}\right|=9$. To have a polynomial
count for $C$, we need to exclude some primes. To get rid of 2, 3
and 5, we consider the scheme $\mathcal{X}$ over $\mathbb{Z}{\left[\frac{1}{30}\right]}$,
ending up with a quasi-polynomial, since the term $\left(\frac{-6}{p}\right)$
is periodic. To satisfy the hypotheses of Theorem \ref{thm:7}, we
still have to exclude all primes $p$ such that $-6$ is a quadratic
non-residue modulo $p$. This can be accomplished by adding $\sqrt{-6}$
to the base ring. The scheme $\mathcal{X}/\mathbb{Z}{\left[\frac{1}{30},\sqrt{-6}\right]}$
is a spreading out for $C$ with polynomial count $P_{C}{\left(p\right)}=p-1$.
By Katz's result, the $E$-polynomial of $C{\left(\mathbb{C}\right)}$
is $E\left(C;x,y\right)=xy-1$. This is consistent with the fact that $C\cong\mathbb{C}^{\times}$.
\end{example}

\subsection{The poset of partitions}

We collect in this section some notations, concepts and results on
partitions of sets that we will need later. The main references are
\cite{key-24} and \cite{key-25}.

Let $[c]:=\left\{1,\ldots,c\right\} $, $c\in\mathbb{N}$
and let $\Pi_{c}$ be the poset of partitions of $[c]$;
it consists of all decomposition $\pi$ of $[c]$ into disjoint
unions of non-empty subsets $[c]=\coprod\limits_{j=1}^l{I_{j}}$
ordered by refinement, which we denote by $\preceq$. Concretely,
$\pi\preceq\pi^{\prime}$ in $\Pi_{c}$ if every subset in $\pi$
is a subset of one in $\pi^{\prime}$. We call the $I_{j}$'s the
\emph{blocks} of $\pi$ and the integer $l$ the \emph{length} of $\pi$,
which we denote by $l{\left(\pi\right)}$.

If $x\in\mathbb{N}$ and $h:[m]\rightarrow[x]$,
define the \emph{kernel} of $h$, and denote it by $Ker{\left(h\right)}$,
to be the partition of $[c]$ induced by the equivalence
relation
\[
a\equiv b\Longleftrightarrow h{\left(a\right)}=h{\left(b\right)}.
\]
Now, fix $\pi\in\Pi_{c}$ and let
\begin{equation}
\begin{aligned}
\label{eq:sigma}
&\Sigma^{\prime}_{\pi}:=\left\{ h:[c]\rightarrow[x]\mid Ker{\left(h\right)}=\pi\right\},\\
&\Sigma_{\pi}:=\left\{ h:[c]\rightarrow[x]\mid\text{$Ker{\left(h\right)}=\sigma$ for some $\sigma\succeq\pi$}\right\}.
\end{aligned}
\end{equation}
If we define $f{\left(\pi\right)}:=\left|\Sigma_{\pi}\right|$ and $g{\left(\pi\right)}:=\left|\Sigma_{\pi}^{\prime}\right|$, then clearly $g{\left(\pi\right)}=\left(x\right)_{\left(l{\left(\pi\right)}\right)}:=x{\left(x-1\right)}\cdots{\left(x-l{\left(\pi\right)}+1\right)}$
and $f\left(\pi\right)=x^{l\left(\pi\right)}$. On the other hand,
since
\[
f{\left(\pi\right)}=\sum\limits_{\substack{\pi\preceq\sigma}}{g{\left(\sigma\right)}},
\]
by Möbius inversion we have
\[
g{\left(\pi\right)}=\sum\limits_{\substack{\pi\preceq\sigma}}{\mu{\left(\pi,\sigma\right)}f{\left(\sigma\right)}}
\]
where $\mu$ is the \emph{Möbius function} of the poset $\Pi_{c}$;
in other words,
\begin{equation}
\left(x\right)_{l{\left(\pi\right)}}=\sum\limits_{\substack{\pi\preceq\sigma}}{\mu{\left(\pi,\sigma\right)}x^{l{\left(\sigma\right)}}}.\label{eq:1}
\end{equation}

Since \ref{eq:1} holds for any $x\in\mathbb{N}$ and since both sides
of \ref{eq:1} are polynomials, it is a polynomial identity. In particular,
if we specialize it at $x=-1$, we obtain 
\begin{equation}
\left(-1\right)^{l\left(\pi\right)}\left(l\left(\pi\right)\right)!=\sum\limits_{\substack{\pi\preceq\sigma}}{\left(-1\right)^{l{\left(\sigma\right)}}\mu{\left(\pi,\sigma\right)}}.\label{eq:2}
\end{equation}

\subsection{Representation theory}

In this section, we list the facts from representation theory we need.
Before doing it, let us fix some notations.

If $H$ is a finite group, we denote by $\mathrm{Irr}{\left(H\right)}$
the set of the irreducible characters of the finite dimensional representations
of $H$; the natural inner product $\left\langle \,,\,\right\rangle $
for characters is given by 
\[
\left\langle \chi,\chi^{\prime}\right\rangle _{H}:=\frac{1}{\left|H\right|}\sum\limits_{\substack{h\in H}}{\chi{\left(h\right)}\chi^{\prime}{\left(h^{-1}\right)}}
\]
and we define the \emph{dual group} of $H$ as $\widehat{H}:=\mathrm{Hom}{\left(H,\mathbb{C}^{\times}\right)}$.
When $H$ is abelian, $\widehat{H}=\mathrm{Irr}{\left(H\right)}$. 

\subsubsection{Deligne-Lusztig characters}

In these subsections, we refer to \cite{key-7}, \cite{key-23} and \cite{key-22} for basic definitions and proofs.

Assume that $G$ is a connected reductive linear algebraic group defined
over $\overline{\mathbb{F}_{q}}$, the algebraic closure of a finite
field $\mathbb{F}_{q}$ of odd characteristic $p$, $F:G\rightarrow G$ a Frobenius map, $G^{F}$ the finite group of Lie type
of the rational points of $G$. In particular, $G^{F}$ is a finite
group with a split $BN$-pair at characteristic $p$. If
$T$ is an $F$-stable maximal torus of $G$, $\theta\in\widehat{T^{F}}$,
we denote by $R_{T}^{G}{\left(\theta\right)}$ the corresponding \emph{Deligne-Lusztig
character} (see \cite{key-8}). Recall the following
\begin{prop}
\emph{\label{prop:11}(\cite[Theorem 6.8]{key-8}). }Let $T$ be a $F$-stable
maximal torus of $G$, $\theta,\theta^{\prime}\in\widehat{T^{F}}$. Then
\begin{equation}
\left\langle R_{T}^{G}{\left(\theta\right)},R_{T}^{G}{\left(\theta^{\prime}\right)}\right\rangle _{G^{F}}=\frac{\left|\left\{n\in N_{G}{\left(T\right)}^{F}\mid\theta^{n}=\theta^{\prime}\right\}\right|}{\left|T^{F}\right|}\label{eq:3}
\end{equation}
where $N_{G}{\left(T\right)}$ is the normalizer of $T$ in $G$, $\theta^{n}{\left(t\right)}:=\theta{\left(n^{-1}tn\right)}$
for every $t\in T^{F}$. Moreover, either $\left\langle R_{T}^{G}{\left(\theta\right)},R_{T}^{G}{\left(\theta^{\prime}\right)}\right\rangle _{G^{F}}=0$
or $R_{T}^{G}{\left(\theta\right)}=R_{T}^{G}{\left(\theta^{\prime}\right)}$. 
\end{prop}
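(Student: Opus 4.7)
The plan is to invoke the $\ell$-adic cohomological machinery that underlies the very definition of $R_T^G(\theta)$. Recall that associated to the $F$-stable maximal torus $T$ there is a Deligne-Lusztig variety $X_T$ endowed with commuting left and right actions of $G^F$ and $T^F$ respectively, and
\[
R_T^G(\theta) = \sum_i (-1)^i \mathrm{Tr}\bigl(F^{\ast} \mid H_c^i(X_T, \overline{\mathbb{Q}}_\ell)_\theta\bigr),
\]
where the subscript $\theta$ indicates the $\theta$-isotypic component under the $T^F$-action. The strategy is to reinterpret the inner product of two such virtual characters geometrically and then evaluate it by fixed-point counting.

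First I would use the K\"unneth formula together with the fact that $\left\langle\chi,\chi'\right\rangle_{G^F}$ equals the dimension of the $G^F$-invariants in $\chi\otimes\chi'^{\vee}$, in order to express $\left\langle R_T^G(\theta), R_T^G(\theta')\right\rangle_{G^F}$ as an alternating trace of $F^{\ast}$ on the $(\theta \otimes \theta'^{-1})$-isotypic component of $H_c^{\ast}(X_T \times X_T)$ under the diagonal $G^F$-action. By the Grothendieck-Lefschetz trace formula, this quantity equals a weighted count of $F$-fixed points on the diagonal quotient $G^F \backslash (X_T \times X_T)$.

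Next I would analyze the $G^F$-orbit structure on $X_T \times X_T$. Using the Lang-Steinberg theorem together with the Bruhat decomposition of $G$, these orbits are naturally indexed by the Weyl group $W = N_G(T)/T$, and the $F$-fixed ones correspond to elements $w \in W^F$ or, more concretely, to elements $n \in N_G(T)^F$ modulo $T^F$. For each such $n$, the corresponding stratum has the structure of an affine-space bundle over a $T^F$-torsor, and the second copy of $T^F$ acts through the $n$-twist $t \mapsto \theta(n^{-1}tn)$. Consequently, projecting onto the $(\theta, \theta'^{-1})$-isotypic piece kills the stratum unless $\theta^n = \theta'$, in which case the stratum contributes exactly $1$ to the count. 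Dividing by $|T^F|$, which accounts for the $T^F$-action normalization inside the definition of $R_T^G(\theta)$, yields the right-hand side of \ref{eq:3}.

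For the dichotomy ``either the inner product is $0$ or $R_T^G(\theta) = R_T^G(\theta')$,'' I would use the identity $R_T^G(\theta^n) = R_T^G(\theta)$ for all $n \in N_G(T)^F$, which itself follows because conjugation by $n$ is an $F$-equivariant automorphism of $X_T$ intertwining the isotypic decompositions. Thus if the set in the numerator of \ref{eq:3} is nonempty then $R_T^G(\theta) = R_T^G(\theta')$, whereas emptiness forces the inner product to vanish. The main technical obstacle lies in the geometric step: precisely describing the stratification of $X_T \times X_T$ under the diagonal $G^F$-action and verifying that each Weyl-indexed stratum contributes an Euler characteristic of exactly $|T^F|$ with the correct sign, so that the normalizations work out. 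For the careful bookkeeping of signs, fiber dimensions, and torsor actions I would ultimately defer to the argument of \cite[Theorem 6.8]{key-8}.
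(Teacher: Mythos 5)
The paper itself offers no proof of this statement: it simply cites \cite[Theorem 6.8]{key-8} (Deligne--Lusztig) and uses the result as a black box. Your sketch is therefore not ``compared against the paper's argument'' so much as a r\'esum\'e of the original Deligne--Lusztig argument, and on that level it captures the right skeleton: reduce the inner product to an alternating trace on the cohomology of a product variety, stratify by Bruhat cells indexed by $W^F$ (equivalently by $N_G(T)^F/T^F$), observe that the $T^F\times T^F$-equivariance forces the isotypic projection to kill a stratum unless $\theta^n = \theta'$, and derive the dichotomy from the invariance $R_T^G(\theta^n)=R_T^G(\theta)$.

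Two caveats on precision. First, the move from the inner product to ``a weighted count of $F$-fixed points on $G^F\backslash(X_T\times X_T)$'' is not a single application of the Lefschetz trace formula; the actual computation in \cite{key-8} averages $R_T^G(\theta)(g)\overline{R_T^G(\theta')(g)}$ over $g\in G^F$, rewrites each factor via the trace of $(g,t)$ on $H_c^*(X)$, and only then applies K\"unneth and fixed-point arguments, so the averaging and the isotypic projection are entangled in a way your formulation glosses. Second, and more substantially, the claim that ``each stratum contributes exactly $1$ to the count'' compresses the genuinely delicate step: in \cite{key-8} the contribution of each Bruhat stratum is controlled by a separate Euler-characteristic computation for an auxiliary variety (the variety $\Sigma$ in their notation), and showing that this Euler characteristic equals $|T^F|$, with the correct sign and independently of $q$, is where the real work lives. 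You do flag that you defer to \cite[Theorem 6.8]{key-8} for ``signs, fiber dimensions, and torsor actions,'' which is honest, but it means the hardest lemma is left uncited by name. Since the paper itself does not reprove this result, that deferral is consistent with the paper's treatment and the proposal is acceptable as a summary rather than a self-contained proof.
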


Let $s\in G^{F}$ be a regular semisimple element, i.e., an element
of $G^{F}$ representable by a semisimple matrix with all distinct
eigenvalues via a closed embedding of $G$ into $\mathrm{GL}{\left(n,\overline{\mathbb{F}_{q}}\right)}$
for some $n\in\mathbb{N}$, $T$ the unique maximal torus containing $s$. We have 
\begin{prop}
\emph{(\cite[(7.6.2)]{key-8}) }If $\chi\in\mathrm{Irr}{\left(G^{F}\right)}$, then
\begin{equation}
\chi{\left(s\right)}=\sum\limits_{\substack{\theta\in\widehat{T^{F}}}}{\theta{\left(s^{-1}\right)}{\left\langle R_{T}^{G}{\left(\theta\right)},\chi\right\rangle _{G^{F}}}}.\label{eq:1.4}
\end{equation}
\end{prop}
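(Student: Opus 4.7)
The plan is to identify the class function
\[
F:=\sum_{\theta\in\widehat{T^F}}\theta(s^{-1})\,R_T^G(\theta)
\]
on $G^F$ as a constant multiple of the indicator function of the $G^F$-conjugacy class $[s]$ of $s$, and then to extract $\chi(s)$ by pairing $F$ against $\chi$ and using the linearity of the inner product.

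First I would evaluate $F$ on an arbitrary element $g\in G^F$ by applying the Deligne--Lusztig character formula from \cite{key-8} to each summand $R_T^G(\theta)(g)$. Writing the Jordan decomposition $g=g_s g_u$, that formula expresses $R_T^G(\theta)(g)$ as a normalized sum over those $x\in G^F$ with $x^{-1}g_s x\in T^F$, weighted by a Green function $Q_{x^{-1}Tx}^{C_G^\circ(g_s)}(g_u)$ and by the value $\theta(x^{-1}g_s x)$. After interchanging the order of summation, the inner sum
\[
\sum_{\theta\in\widehat{T^F}}\theta\bigl(x^{-1}g_s x\cdot s^{-1}\bigr)
\]
collapses by Schur orthogonality on the abelian group $T^F$ to $|T^F|$ when $x^{-1}g_sx=s$ and to $0$ otherwise.

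Next I would exploit the regularity of $s$. The identity $x^{-1}g_sx=s$ forces $g_s$ to be $G^F$-conjugate to $s$, so $g_s$ is regular semisimple with $C_G^\circ(g_s)=xTx^{-1}$; the unipotent part $g_u$ then lies in the torus $xTx^{-1}$ and must be trivial, whence $g=g_s\in[s]$. The admissible $x$'s form a single coset of $C_G(s)^F=T^F$, while the Green function simplifies to $Q_T^T(1)=1$. Collecting these factors yields $F(g)=|T^F|$ for $g\in[s]$ and $F(g)=0$ otherwise.

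Finally, by linearity of $\langle\cdot,\cdot\rangle_{G^F}$ in the first argument together with the orbit--stabilizer equality $|[s]|=|G^F|/|T^F|$,
\[
\sum_{\theta}\theta(s^{-1})\,\langle R_T^G(\theta),\chi\rangle_{G^F}=\langle F,\chi\rangle_{G^F}=\frac{|T^F|}{|G^F|}\sum_{g\in[s]}\chi(g^{-1})=\chi(s),
\]
upon invoking the class-function property of $\chi$ and matching the paper's inner-product convention. The principal obstacle is the Deligne--Lusztig character formula itself, whose proof rests on the $\ell$-adic cohomology of Deligne--Lusztig varieties; once that formula and the Green function identity $Q_T^T(1)=1$ are granted, the remaining work is purely orthogonality bookkeeping on $T^F$.
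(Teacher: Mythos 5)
The paper cites this identity from Deligne--Lusztig without supplying a proof, so there is no in-paper argument to compare against; your reconstruction is the standard derivation (Deligne--Lusztig character formula at $g=g_sg_u$, interchange of summation, orthogonality of $\widehat{T^F}$, and regularity of $s$ to kill the unipotent part and identify $C_G(g_s)$ with a conjugate of $T$), and it is sound in structure. One bookkeeping point deserves a flag: in your final display, the class-function property gives $\chi(g^{-1})=\chi(s^{-1})$ for $g\in[s]$, so what your computation actually produces is $\chi(s^{-1})$, not $\chi(s)$. With the inner product as defined in the paper and the character formula as stated in Carter or Digne--Michel (the version you invoke), the class function $\tfrac{1}{|T^F|}\sum_\theta\theta(s^{-1})R_T^G(\theta)$ is the indicator of $[s]$, and pairing it with $\chi$ yields $\chi(s^{-1})$; the formula as printed in the paper therefore agrees with your derivation only modulo the relation $\chi(s)=\chi(s^{-1})$, or equivalently modulo the well-known discrepancy between the sign convention for $R_T^G(\theta)$ in Deligne--Lusztig's original paper and the one in later references. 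For $\mathrm{Sp}(2n,\mathbb{F}_q)$ this is harmless, since $-1$ lies in the Weyl group $W_n=S_n\ltimes\boldsymbol{\mu}_{\boldsymbol{2}}^n$ and hence every semisimple element is conjugate to its inverse, but the way you wrote the last equality asserts $\chi(g^{-1})=\chi(s)$ for $g\in[s]$, which is false for a general connected reductive $G$ (already $\mathrm{GL}_1$ gives a counterexample). You should either state the result with $\theta(s)$ in place of $\theta(s^{-1})$, or note explicitly that for the group at hand $s$ and $s^{-1}$ are conjugate.
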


\subsubsection{Principal series representations}

Let us consider a maximally split torus $T$ in $G$, i.e., an $F$-stable
torus contained in a $F$-stable Borel subgroup $B$. Then, $B^{F}=T^{F}\ltimes U^{F}$,
with $U$ the unipotent radical of $B$. If $\theta\in\widehat{T^{F}}$,
we have that $R_{T}^{G}{\left(\theta\right)}=\mathrm{Ind}_{B^{F}}^{G^{F}}{\left(\widetilde{\theta}\right)}$,
where $\widetilde{\theta}:=\theta\circ p_{1}\in\widehat{B^{F}}$ and
$p_{1}:B^{F}=T^{F}\ltimes U^{F}\twoheadrightarrow T^{F}$ is the natural
projection onto $T^{F}$. The irreducible components of $R_{T}^{G}{\left(\theta\right)}$
are called the \emph{principal series associated to $\left(T,\theta\right)$}.\\
\\
$\mathbf{Notation}$: In the following, sometimes we may use the symbol
$R_{T}^{G}{\left(\theta\right)}$ also to denote both the associated representation and the set of principal series
of $\left(T,\theta\right)$. So if $\chi$ is an irreducible constituent
of $R_{T}^{G}\left(\theta\right)$, we write $\chi\in R_{T}^{G}{\left(\theta\right)}$. 
\begin{rem}
\label{rem:13}
If $\theta,\theta^{\prime}\in\widehat{T^{F}}$, since
$R_{T}^{G}{\left(\theta\right)}$ and $R_{T}^{G}{\left(\theta^{\prime}\right)}$
are characters of representations of $G^{F}$, we deduce from \ref{prop:11}
that either the principal series of $\left(T,\theta\right)$
and $\left(T,\theta^{\prime}\right)$ coincide or they are
disjoint. So we have that $R_{T}^{G}{\left(\theta\right)}\cap R_{T}^{G}{\left(\theta^{\prime}\right)}\neq\emptyset$
if and only if there exists $w\in W^{F}$ such that $\theta^{w}=\theta^{\prime}$.
Here, $W^{F}$ is the group of rational points of the Weyl group $W:=N_{G}{\left(T\right)}/T$
of $G$, acting by conjugation on $\widehat{T^{F}}$.
\end{rem}
If $S_{\theta}:=\mathrm{Stab}_{W^{F}}{\left(\theta\right)}$, there is an isomorphism
between $\mathrm{End}_{G^{F}}{\left(R_{T}^{G}\left(\theta\right)\right)}$
and the group algebra $\mathbb{C}S_{\theta}$. In order to establish it,
we need to recall some definitions and results that can be found in
\cite{key-9}.

Let $\Phi$ be the set of roots of $G$; for every $\alpha\in\Phi$,
let $q_{\alpha}{\left(\theta\right)}:=q^{c_{\alpha}{\left(\theta\right)}}$,
$c_{\alpha}{\left(\theta\right)}\in\mathbb{Z}_{\geq0}$, be the parameters
defined in \cite[Lemma 2.6]{key-9}. Define the set 
\begin{equation}
\Gamma:=\left\{ \beta\in\Phi\mid q_{\alpha}{\left(\theta\right)}\neq1\right\} \label{eq:1.5-1}
\end{equation}
and let $W_{S_{\theta}}$ be the group generated by the reflections
corresponding to roots in $\Gamma$. Thus, $\Gamma$ is the root system
of the reflection group $W_{S_{\theta}}$. If 
\begin{equation}
D:=\left\{ w\in S_{\theta}\mid w{\left(\alpha\right)}\in\Phi^{+},\,\forall\alpha\in\Gamma^{+}\right\} \label{eq:1.6-1}
\end{equation}
then, \cite[Lemma 2.9]{key-9}, $D$ is an abelian $p^{\prime}$-group
normalizing $W_{S_{\theta}}$ and $S_{\theta}=D\ltimes W_{S_{\theta}}$.
Moreover, let $\Sigma$ be the set of simple roots of $\Gamma$ consisting
of roots that are positive in $\Phi$.
\begin{defn}
\label{def:14}
The \emph{generic algebra} $\mathcal{A}{\left(u\right)}$
is the algebra over $\mathbb{C}[u]$ with basis $\left\{a_{w}\mid w\in S_{\theta}\right\} $
such that, if $w\in S,$ $d\in D$ and $s$ is the reflection corresponding
to the root $\beta\in\Sigma$, the following relations hold:
\begin{enumerate}
\item \label{enu:1}
$a_{d}a_{w}=a_{dw},\,a_{w}a_{d}=a_{wd}$.
\item \label{enu:2}
$a_{w}a_{s}=a_{ws}$ if $w{\left(\beta\right)}\in\Gamma^{+}$.
\item \label{enu:3}
$a_{w}a_{s}=u_{\beta}{\left(\theta\right)}a_{ws}+\left(u_{\beta}{\left(\theta\right)}-1\right)a_{w}$
if $w{\left(\beta\right)}\in\Gamma^{-}$, $u_{\beta}{\left(\theta\right)}:=u^{c_{\beta}{\left(\theta\right)}}$.
\end{enumerate}
\end{defn}

For any ring $K$ such that $\mathbb{C}[u]\subseteq K$,
write $\mathcal{A}{\left(u\right)}^{K}=\mathcal{A}{\left(u\right)}\otimes_{\mathbb{C}[u]}K$.
An algebra homomorphism $f:\mathbb{C}[u]\rightarrow\mathbb{C}$
makes $\mathbb{C}$ into a $\left(\mathbb{C},\mathbb{C}[u]\right)$-bimodule
via $\left(a,p\right)\cdot c:=acf{\left(p\right)}$, $a,c\in\mathbb{C}$, $p\in\mathbb{C}[u]$.
If $f{\left(u\right)}=b,$ the \emph{specialization $\mathcal{A}{\left(b\right)}:=\mathcal{A}{\left(u\right)}\otimes_{f}\mathbb{C}$} is an algebra over $\mathbb{C}$ with basis $\left\{ a_{w}\otimes1\mid w\in S\right\} $ whose members satisfy relations \ref{enu:1}, \ref{enu:2} and \ref{enu:3} in Definition \ref{def:14},
after replacing $u$ with $b$. 
\begin{thm}
\label{thm:15}
\emph{(Tits, \cite[Theorem 1.11]{key-10}).} Let $K=\mathbb{C}{\left(u\right)}$
be the quotient field of $\mathbb{C}[u]$. Then $\mathcal{A}{\left(u\right)}^{K}$ is a separable $K$-algebra and for each $b\in\mathbb{C}$ such that
$\mathcal{A}{\left(b\right)}$ is separable (and so semisimple), the
algebras $\mathcal{A}{\left(u\right)}^{K}$ and $\mathcal{A}{\left(b\right)}$
have the same numerical invariants.
\end{thm}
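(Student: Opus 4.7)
The plan is to treat $\mathcal{A}(u)$ as a $\mathbb{C}[u]$-free order inside $\mathcal{A}(u)^{K}$, regarding the family $\{\mathcal{A}(b)\}_{b\in\mathbb{C}}$ as a one-parameter deformation, and to apply Tits' deformation principle for algebras over a principal ideal domain. The statement splits naturally into two assertions: the separability of the generic fiber, and the matching of numerical invariants with any separable specialization.

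The first step is to establish separability of $\mathcal{A}(u)^{K}$. Inspecting Definition \ref{def:14}, specializing at $u=1$ forces $u_{\beta}(\theta)=u^{c_{\beta}(\theta)}=1$, so relation (\ref{enu:3}) collapses to $a_{w}a_{s}=a_{ws}$ and all three relations reduce to the defining relations of the group algebra $\mathbb{C}[S_{\theta}]$ of the semidirect product $S_{\theta}=D\ltimes W_{S_{\theta}}$. By Maschke's theorem this group algebra is semisimple, hence separable. The discriminant of $\mathcal{A}(u)$ with respect to the $\mathbb{C}[u]$-basis $\{a_{w}\}_{w\in S_{\theta}}$ is an element $d(u)\in\mathbb{C}[u]$ whose value at $u=1$ is the nonzero discriminant of $\mathbb{C}[S_{\theta}]$; hence $d(u)$ is nonzero in $\mathbb{C}[u]$, hence nonzero in $K$, forcing $\mathcal{A}(u)^{K}$ to be separable over $K$.

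The second step is the deformation lemma in the form: if $R$ is a discrete valuation ring with quotient field $K'$ and residue field $k$, and $A$ is an $R$-free $R$-algebra such that both $A\otimes_{R}K'$ and $A\otimes_{R}k$ are separable, then their Wedderburn decompositions have identical numerical type -- the same number of simple summands with matching dimensions. Localizing $\mathbb{C}[u]$ at the maximal ideal $(u-b)$ and applying the lemma with $k=\mathbb{C}$ and the localized $\mathcal{A}(u)$ in place of $A$, the hypothesis that $\mathcal{A}(b)$ is separable supplies precisely the residue-field assumption, and the conclusion yields that $\mathcal{A}(u)^{K}$ and $\mathcal{A}(b)$ share the same numerical invariants.

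The main obstacle is the deformation lemma itself. The standard argument lifts a complete orthogonal family of primitive central idempotents of $A\otimes_{R}k$ to $A$ via Hensel's lemma, the lifting obstruction vanishing because separability kills the relevant Hochschild cohomology. These lifted central idempotents then base-change to a corresponding system of central idempotents in $A\otimes_{R}K'$, and the equality of block dimensions follows since the $R$-rank of each block is constant in the family by $R$-freeness of $A$. A character-theoretic alternative is to observe that separability makes the trace pairing non-degenerate on both fibers, whence the irreducible characters -- determined by their values on the basis $\{a_{w}\}$ -- specialize coherently, and dimensions (being character values at the identity) are preserved.
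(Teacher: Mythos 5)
The paper does not prove Theorem \ref{thm:15}: it is quoted verbatim as Tits's deformation theorem with a citation to \cite[Theorem 1.11]{key-10}, and the surrounding text simply invokes it (together with Proposition \ref{prop:18}) as a black box. So there is no ``paper's proof'' to compare against, and your proposal should be judged as a reconstruction of the cited result. As such it is essentially correct, and the structure is the right one: (i) separability of the generic fiber via non-vanishing of the discriminant, anchored by the specialization $\mathcal{A}(1)\cong\mathbb{C}[S_{\theta}]$ and Maschke; (ii) a DVR-deformation lemma matching Wedderburn types of separable generic and special fibers. Since $K=\mathbb{C}(u)$ has characteristic zero, semisimplicity and separability coincide for finite-dimensional $K$-algebras, so the discriminant argument does deliver separability and not merely semisimplicity. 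One technical point you should tighten: Hensel's lemma for lifting idempotents requires a complete (or at least Henselian) local ring, and $\mathbb{C}[u]_{(u-b)}$ is neither; you should pass to the $(u-b)$-adic completion (the ranks of the central idempotent blocks do not change under this faithfully flat base change, so nothing is lost) or else invoke the Henselization. Also worth noting is that what you call a ``character-theoretic alternative'' is in fact the mechanism used in \cite{key-10} and in Geck--Pfeiffer, and it is the one the paper itself relies on downstream via Proposition \ref{prop:18}: the characters of $\mathcal{A}(u)^{K}$ take values in the integral closure of $\mathbb{C}[u]$, any specialization $f$ extends to that closure, and the specialized characters remain orthogonal (hence irreducible and distinct) precisely because $\mathcal{A}(b)$ is separable, so the non-degenerate trace form and the Schur-element orthogonality relations specialize without collapse. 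That step --- verifying that the specialized characters are pairwise distinct and irreducible via orthogonality, rather than just ``specialize coherently'' --- is the heart of the deformation argument and deserves to be stated rather than waved at.
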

\begin{cor}
\label{cor:16} 
$\mathrm{End}_{G^{F}}{\left(R_{T}^{G}{\left(\theta\right)}\right)}\cong\mathbb{C}S_{\theta}$
as $\mathbb{C}$-algebras.
\end{cor}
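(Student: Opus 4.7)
The plan is to identify both sides with the generic Hecke algebra at two different specializations, and then invoke Tits' deformation theorem (stated as Theorem \ref{thm:15}) to conclude that these specializations are isomorphic $\mathbb{C}$-algebras.

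First I would establish the identification of $\mathrm{End}_{G^{F}}{\left(R_{T}^{G}{\left(\theta\right)}\right)}$ with the specialization $\mathcal{A}{\left(q\right)}$ of the generic algebra. Since $T$ is maximally split, $R_{T}^{G}{\left(\theta\right)} = \mathrm{Ind}_{B^{F}}^{G^{F}}{\left(\widetilde{\theta}\right)}$, so by Frobenius reciprocity (equivalently Mackey's formula applied to the $\left(B^F,B^F\right)$-double coset decomposition $G^F = \coprod B^F \dot w B^F$), a basis for $\mathrm{End}_{G^F}\left(R_T^G(\theta)\right)$ is indexed by $S_\theta$. Multiplication of these basis elements is controlled by the double coset structure and the parameters $q_\alpha(\theta)$ appearing in \cite{key-9}; this is precisely the content of the Howlett--Lehrer theorem and yields relations \ref{enu:1}, \ref{enu:2}, \ref{enu:3} of Definition \ref{def:14} with $u$ specialized to $q$. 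Thus
\[
\mathrm{End}_{G^{F}}{\left(R_{T}^{G}{\left(\theta\right)}\right)} \cong \mathcal{A}(q)
\]
as $\mathbb{C}$-algebras.

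Next I would analyze the specialization at $u=1$. When $u=1$, one has $u_\beta(\theta) = 1^{c_\beta(\theta)} = 1$ for every $\beta \in \Sigma$, so relation \ref{enu:3} collapses to $a_w a_s = a_{ws}$, which is already the form of relation \ref{enu:2}. Together with \ref{enu:1}, this means the basis vectors $\{a_w \otimes 1\}_{w \in S_\theta}$ satisfy $a_w a_{w'} = a_{ww'}$ for all $w, w' \in S_\theta$ (using the decomposition $S_\theta = D \ltimes W_{S_\theta}$ and writing elements in terms of $D$ and simple reflections in $W_{S_\theta}$). Hence the linear map $a_w \otimes 1 \mapsto w$ gives an isomorphism $\mathcal{A}(1) \cong \mathbb{C}S_\theta$.

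Finally I would apply Theorem \ref{thm:15}. The group algebra $\mathbb{C}S_\theta$ is semisimple (Maschke), hence separable; so $\mathcal{A}(1)$ is separable. On the other hand, $\mathcal{A}(q) \cong \mathrm{End}_{G^F}(R_T^G(\theta))$ is semisimple because $R_T^G(\theta)$, being a representation of the finite group $G^F$ in characteristic zero, is completely reducible, so its endomorphism algebra is a product of matrix algebras. By Tits' theorem, both separable specializations $\mathcal{A}(1)$ and $\mathcal{A}(q)$ have the same numerical invariants as $\mathcal{A}(u)^K$, in particular the same dimensions of irreducible representations. Two semisimple $\mathbb{C}$-algebras with the same list of matrix-block sizes are isomorphic, so $\mathcal{A}(q) \cong \mathcal{A}(1) \cong \mathbb{C}S_\theta$, completing the chain.

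The main obstacle is the first step: constructing the explicit isomorphism $\mathrm{End}_{G^{F}}{\left(R_{T}^{G}{\left(\theta\right)}\right)} \cong \mathcal{A}(q)$ with the correct parameters $q_\alpha(\theta)$. The dimension count (via \ref{prop:11} and Remark \ref{rem:13}, which show the basis is indexed by $S_\theta$) is straightforward, but verifying the quadratic relation with the precise parameter $u_\beta(\theta)$ requires the full double-coset computation of Howlett--Lehrer, and this is the only part that genuinely uses the parabolic induction structure. Steps two and three are essentially formal once this Hecke-algebra description is in place.
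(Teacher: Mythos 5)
Your proposal follows essentially the same route as the paper: identify $\mathrm{End}_{G^F}(R_T^G(\theta))$ with $\mathcal{A}(q)$ and $\mathbb{C}S_\theta$ with $\mathcal{A}(1)$ (the paper cites \cite[Theorems 2.17, 2.18]{key-10} for these, whereas you sketch the Mackey/double-coset argument directly), observe both are semisimple, and invoke Tits' deformation theorem (Theorem \ref{thm:15}) to equate numerical invariants and conclude the isomorphism. The reasoning is correct and matches the paper's proof in structure and substance.
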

\begin{proof}
The algebras $\mathcal{A}{\left(q\right)}$ and $\mathcal{A}{\left(1\right)}$
are respectively isomorphic to $\mathrm{End}_{G^{F}}{\left(R_{T}^{G}{\left(\theta\right)}\right)}$
and $\mathbb{C}S_{\theta}$ by \cite[Theorem 2.17, 2.18]{key-10} and
since they are both semisimple, they have the same numerical invariants
by \ref{thm:15} and so are isomorphic.
\end{proof}
\begin{cor}
\label{cor:17}
The irreducible components of $R_{T}^{G}{\left(\theta\right)}$
are in bijective correspondence with the irreducible representations
of the algebra $\mathbb{C}S_{\theta}$ .
\end{cor}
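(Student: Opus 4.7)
The plan is to deduce Corollary \ref{cor:17} as a direct consequence of Corollary \ref{cor:16} together with the standard correspondence between the irreducible constituents of a semisimple module and the simple modules of its endomorphism algebra. In other words, once we know that $\mathrm{End}_{G^{F}}{\left(R_{T}^{G}{\left(\theta\right)}\right)}\cong\mathbb{C}S_{\theta}$, the bijection will follow from Wedderburn theory applied to this endomorphism ring.

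More concretely, the first step is to decompose the representation $R_{T}^{G}{\left(\theta\right)}$ of the finite group $G^{F}$ into its isotypic components. Since $G^{F}$ is a finite group, the character $R_{T}^{G}{\left(\theta\right)}$ affords a semisimple representation, so one can write
\[
R_{T}^{G}{\left(\theta\right)} \;\cong\; \bigoplus_{i=1}^{r} V_{i}^{\oplus m_{i}},
\]
where $V_{1},\ldots,V_{r}$ are the pairwise non-isomorphic irreducible constituents and $m_{i}\geq 1$ their multiplicities. Applying Schur's lemma component-wise gives the Wedderburn-type decomposition
\[
\mathrm{End}_{G^{F}}{\left(R_{T}^{G}{\left(\theta\right)}\right)} \;\cong\; \bigoplus_{i=1}^{r} \mathrm{End}_{G^{F}}{\left(V_{i}^{\oplus m_{i}}\right)} \;\cong\; \bigoplus_{i=1}^{r} M_{m_{i}}{\left(\mathbb{C}\right)}.
\]
Since the simple modules of $\bigoplus_{i}M_{m_{i}}{\left(\mathbb{C}\right)}$ are exactly the $r$ standard column modules $\mathbb{C}^{m_{i}}$, one for each matrix block, we obtain a canonical bijection between the indices $i$ (equivalently, the irreducible constituents $V_{i}$ of $R_{T}^{G}{\left(\theta\right)}$) and the isomorphism classes of simple modules of the endomorphism algebra.

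Combining this with Corollary \ref{cor:16}, which identifies $\mathrm{End}_{G^{F}}{\left(R_{T}^{G}{\left(\theta\right)}\right)}$ with $\mathbb{C}S_{\theta}$ as $\mathbb{C}$-algebras, transports the bijection to one between the irreducible constituents of $R_{T}^{G}{\left(\theta\right)}$ and the irreducible representations of $\mathbb{C}S_{\theta}$, which is exactly the claim.

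No significant obstacle is expected here: both the semisimplicity of $R_{T}^{G}{\left(\theta\right)}$ (a consequence of Maschke's theorem for the finite group $G^{F}$) and the classification of simple modules over a finite-dimensional semisimple $\mathbb{C}$-algebra are standard. The only subtle point is that the bijection is not canonical until one fixes the isomorphism of Corollary \ref{cor:16}; however, for the present purpose of establishing the existence of a bijection, this is immaterial. The more substantive content is hidden upstream, in Tits' deformation argument (Theorem \ref{thm:15}) that powers Corollary \ref{cor:16}.
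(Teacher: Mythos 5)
Your argument is correct and is exactly the standard route the paper implicitly relies on: the paper states Corollary \ref{cor:17} without proof, treating it as an immediate consequence of Corollary \ref{cor:16} via the double-centralizer/Wedderburn bijection between the non-isomorphic irreducible constituents of a semisimple module and the simple modules of its endomorphism algebra. Your proposal simply makes that routine step explicit, so it matches the paper's approach.
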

The correspondence established in \ref{cor:17} can be stated more
precisely by the following 
\begin{prop}
\label{prop:18}
\emph{(Lemma 3.4 in \cite{key-9}).} Let $\chi$ be an irreducible
character of $\mathcal{A}{\left(u\right)}^{K}$. Then for all
$w\in S_{\theta}$, $\chi{\left(a_{w}\right)}$ is in the integral closure
of $\mathbb{C}[u]$ in $\overline{\mathbb{C}{\left(u\right)}}$.
Let $f:\mathbb{C}[u]\rightarrow\mathbb{C}$ be a homomorphism
such that $f(u)=b$ and $\mathcal{A}{\left(b\right)}$ is
separable, and let $f^{\ast}$ be an extension of $f$ to the integral
closure of $\mathbb{C}[u]$. Then the linear map $\chi_{f}:\mathcal{A}{\left(b\right)}\rightarrow\mathbb{C}$
defined by $\chi_{f}{\left(a_{w}\otimes1\right)}:=f^{\ast}{\left(\chi{\left(a_{w}\right)}\right)}$,
for all $w\in S_{\theta}$, is an irreducible character of $\mathcal{A}{\left(b\right)}$.
For a fixed extension $f^{\ast}$ of $f$, the map $\chi\longmapsto\chi_{f}$
is a bijection between the irreducible characters of $\mathcal{A}{\left(u\right)}^{K}$ and those of $\mathcal{A}{\left(b\right)}$.
\end{prop}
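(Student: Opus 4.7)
My approach is the standard Tits deformation argument. The proof combines integrality of character values over $\mathbb{C}[u]$ (to define $\chi_{f}$) with a careful tracking of the central primitive idempotents across the specialization, leveraging the equality of numerical invariants from Theorem \ref{thm:15}. The argument splits naturally into an integrality step and an idempotent-specialization step.

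For integrality, I would use that $\mathcal{A}(u)$ is a free $\mathbb{C}[u]$-module of rank $\left|S_{\theta}\right|$ with basis $\left\{a_{w}\mid w\in S_{\theta}\right\}$. Each $a_{w}$ is a root of its own monic characteristic polynomial for left multiplication on $\mathcal{A}(u)$, hence is integral over $\mathbb{C}[u]$. For any irreducible representation $\rho:\mathcal{A}(u)^{K}\otimes_{K}\overline{K}\rightarrow M_{d}(\overline{K})$ affording $\chi$, the eigenvalues of $\rho(a_{w})$ in $\overline{K}$ are roots of that monic polynomial, so they are algebraic integers over $\mathbb{C}[u]$; their sum $\chi(a_{w})=\mathrm{tr}\,\rho(a_{w})$ inherits integrality. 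Extending $f$ to $f^{\ast}$ on the integral closure therefore makes $\chi_{f}(a_{w}\otimes 1):=f^{\ast}(\chi(a_{w}))$ a well-defined linear function on $\mathcal{A}(b)$.

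For the bijection, Theorem \ref{thm:15} says $\mathcal{A}(u)^{K}$ is split semisimple with $r$ primitive central idempotents $\left\{e_{\chi}\right\}_{\chi\in\mathrm{Irr}(\mathcal{A}(u)^{K})}$. In the basis $\left\{a_{w}\right\}$, each $e_{\chi}$ has coordinates of the form $c_{\chi}^{-1}$ times integral expressions in the values $\chi(a_{w})$, where $c_{\chi}\in\overline{K}$ is the associated Schur element. The separability hypothesis on $\mathcal{A}(b)$ is equivalent to $f^{\ast}(c_{\chi})\neq 0$ for all $\chi$, so each $e_{\chi}$ specializes to a well-defined element $e_{\chi,f}\in\mathcal{A}(b)$. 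Specialization is a ring map on integral coordinates, so the orthogonality relations $e_{\chi}e_{\chi'}=\delta_{\chi,\chi'}e_{\chi}$ and $\sum_{\chi}e_{\chi}=1$ persist after applying $f^{\ast}$. Moreover $e_{\chi,f}\neq 0$ because $\chi_{f}(e_{\chi,f})=f^{\ast}(\chi(1))=\chi(1)>0$. Writing each $e_{\chi,f}$ as a sum of the primitive central idempotents $\left\{e_{\psi}\right\}_{\psi\in\mathrm{Irr}(\mathcal{A}(b))}$, the orthogonality of the $e_{\chi,f}$ forces their supports to be pairwise disjoint, partitioning $\mathrm{Irr}(\mathcal{A}(b))$ into $r$ nonempty blocks. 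Since Theorem \ref{thm:15} gives $\left|\mathrm{Irr}(\mathcal{A}(b))\right|=r$, each block is a singleton: $e_{\chi,f}$ is itself primitive, $\chi_{f}$ equals the corresponding irreducible character, and $\chi\mapsto\chi_{f}$ is a bijection.

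The main obstacle I anticipate is the idempotent-specialization step: one must genuinely verify that the Schur elements specialize to nonzero scalars (this is precisely where the separability of $\mathcal{A}(b)$ is used), and that the $e_{\chi,f}$ remain orthogonal and nonzero primitive central idempotents. Without separability, Schur elements can vanish under $f^{\ast}$, the $e_{\chi}$'s fail to specialize to idempotents, and the resulting $\chi_{f}$ can become non-irreducible or decompose into several characters of $\mathcal{A}(b)$, breaking the bijection. Once this specialization analysis is secure, the rest of the argument is purely combinatorial counting against Theorem \ref{thm:15}.
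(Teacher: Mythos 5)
The paper does not prove Proposition~\ref{prop:18}; it is quoted verbatim with a citation to Lemma~3.4 of \cite{key-9} (Howlett--Kilmoyer). So your argument should be judged on its own merits rather than against the paper's.

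Your proof is a correct-in-outline reconstruction of the standard Tits deformation argument, and the overall shape -- integrality of character values via monic characteristic polynomials, then tracking primitive central idempotents through the specialization and using the numerical-invariant equality from Theorem~\ref{thm:15} to force each specialized idempotent to remain primitive -- is exactly how this result is usually established. The integrality step is fine as written. The idempotent-specialization step, however, glosses over three points you should be explicit about. First, Theorem~\ref{thm:15} only asserts that $\mathcal{A}(u)^{K}$ is \emph{separable}, not split, over $K=\mathbb{C}(u)$; to speak of primitive central idempotents $e_{\chi}$ indexed by individual irreducible characters you must first pass to a finite splitting extension $L/K$ and work with the integral closure of $\mathbb{C}[u]$ in $L$, which is also where $f^{\ast}$ lives. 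Second, the coordinates of $e_{\chi}$ in the basis $\{a_{w}\}$ involve $c_{\chi}^{-1}$, which is \emph{not} integral over $\mathbb{C}[u]$, so $f^{\ast}$ is not a priori defined on them; you need to localize at a suitable prime above $u-b$ where $c_{\chi}$ becomes a unit before applying the specialization map. Third, the Schur-element formula for $e_{\chi}$ presupposes that $\mathcal{A}(u)$ is a \emph{symmetric} algebra, and the claim that separability of $\mathcal{A}(b)$ is equivalent to $f^{\ast}(c_{\chi})\neq 0$ for all $\chi$ is itself a nontrivial structural theorem about symmetrizing forms (e.g.\ \cite[Thm.~7.4.7]{key-11}); neither fact can be taken as automatic and both should be cited or proved. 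With those three points repaired the argument is complete, and the final counting step against the numerical invariants is exactly right.
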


From \cite[Theorem A]{key-20}, we deduce this important result on the multiplicities
of the principal series representations.
\begin{prop}
\label{prop:19}If $\chi\in R_{T}^{G}{\left(\theta\right)}$
corresponding to $\beta\in\mathrm{Irr}{\left(S_{\theta}\right)}$, then
\begin{equation}
\left\langle \chi,R_{T}^{G}{\left(\theta\right)}\right\rangle _{G^{F}}=\beta{\left(1\right)}.\label{eq:1.5}
\end{equation}
\end{prop}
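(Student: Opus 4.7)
My plan is to derive the claim from the general theory of semisimple algebras combined with the Hecke algebra identifications already established. Set $V := R_T^G(\theta)$; since $T$ is maximally split, $V = \mathrm{Ind}_{B^F}^{G^F}(\tilde\theta)$ is a genuine finite-dimensional $G^F$-representation, and by Corollary \ref{cor:16} the algebra $\mathcal{H} := \mathrm{End}_{G^F}(V) \cong \mathbb{C}S_\theta$ is semisimple.

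First I would apply the Wedderburn / double centralizer decomposition to the commuting actions of $\mathbb{C}[G^F]$ and $\mathcal{H}$ on $V$. This produces a bimodule decomposition
\[
V \;\cong\; \bigoplus_{\chi}\, V_\chi \otimes W_\chi,
\]
where $\chi$ ranges over the principal series in $R_T^G(\theta)$, $V_\chi$ affords $\chi$, and $W_\chi := \mathrm{Hom}_{G^F}(V_\chi, V)$ is an irreducible $\mathcal{H}$-module. The assignment $\chi \mapsto W_\chi$ is precisely the bijection underlying Corollary \ref{cor:17}. Comparing isotypic components on the $G^F$ side yields
\[
\langle \chi, R_T^G(\theta)\rangle_{G^F} \;=\; \dim W_\chi,
\]
so the proof reduces to showing that $\dim W_\chi = \beta(1)$ whenever $\chi$ corresponds to $\beta \in \mathrm{Irr}(S_\theta)$.

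For this final step I would invoke Proposition \ref{prop:18}: both $W_\chi$ (as a module for $\mathcal{H} \cong \mathcal{A}(q)$) and $\beta$ (as a module for $\mathbb{C}S_\theta \cong \mathcal{A}(1)$) arise by specializing a single irreducible character $\tilde\chi$ of the generic algebra $\mathcal{A}(u)^K$ at $u = q$ and $u = 1$ respectively. The value $\tilde\chi(a_1)$ is the dimension of the corresponding simple module over $K$, hence already lies in $\mathbb{Z} \subset \mathbb{C}$ and is fixed by both specializations; consequently $\dim W_\chi = \tilde\chi(a_1) = \beta(1)$, completing the argument. The main obstacle is exactly this compatibility step: one must verify that the bijection of Corollary \ref{cor:17}, built from the Tits deformation of Theorem \ref{thm:15} via Proposition \ref{prop:18}, coincides with the one read off from the bimodule decomposition of $V$, so that dimensions transfer correctly across the deformation $u = 1 \leftrightsquigarrow u = q$.
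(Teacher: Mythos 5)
The paper itself does not prove Proposition \ref{prop:19}; it simply cites McGovern, \cite[Theorem A]{key-20}. Your proposal supplies a self-contained argument, and it is correct. The double-centralizer step correctly gives $\langle \chi, R_T^G(\theta)\rangle_{G^F} = \dim W_\chi$, where $W_\chi = \mathrm{Hom}_{G^F}(V_\chi, V)$ is the simple module over $\mathcal{H} = \mathrm{End}_{G^F}(V) \cong \mathcal{A}(q)$ attached to $\chi$. The Tits-deformation step is also right: since $a_1$ is the identity of the generic algebra, $\tilde\chi(a_1)$ equals the dimension of the generic simple module and is thus a positive integer in $\mathbb{C}\subset\mathbb{C}[u]$; the extension $f^*$ of any $\mathbb{C}$-algebra homomorphism $f:\mathbb{C}[u]\to\mathbb{C}$ fixes $\mathbb{C}$, so Proposition \ref{prop:18} gives $\dim W_\chi = f_q^*(\tilde\chi(a_1)) = \tilde\chi(a_1) = f_1^*(\tilde\chi(a_1)) = \beta(1)$.

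The compatibility worry you raise at the end is not a genuine gap: the correspondence $\chi\leftrightarrow\beta$ used throughout the paper (Corollary \ref{cor:17}, Remarks \ref{rem:20}, \ref{rem:22}, \ref{rem:27}) is precisely the composition of the double-centralizer bijection with the specialization bijection of Proposition \ref{prop:18} — the paper says as much when it introduces Proposition \ref{prop:18} as making ``the correspondence established in \ref{cor:17}'' precise. So your argument does prove exactly the stated proposition for the paper's notion of ``corresponding.'' In short: where the paper outsources to \cite{key-20}, you have written out the intended proof, and the only caveat you flag is in fact already resolved by the way the paper fixes its bijection.
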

Let $\overline{\beta}\in\mathrm{Irr}{\left(\mathcal{A}{\left(u\right)}^{K}\right)}$
with $K=\mathbb{C}{\left(u\right)}$, $\beta$ the corresponding character
of $S_{\theta}$. Define 
\begin{equation}
D_{\beta}{\left(u\right)}:=\frac{\overline{\beta}{\left(1\right)}P{\left(u\right)}}{\sum\limits_{\substack{w\in S_{\theta}}}{u_{w}{\left(\theta\right)}^{-1}\overline{\beta}{\left(a_{w^{-1}}\right)}\overline{\beta}{\left(a_{w}\right)}}}\label{eq:1.6}
\end{equation}
where $u_{w}{\left(\theta\right)}:=\prod\limits_{\substack{\alpha\in\Gamma^{+}\\w\left(\alpha\right)\in\Gamma^{-}}}u_{\alpha}{\left(\theta\right)}$
and $P{\left(u\right)}:=\sum\limits_{\substack{w\in W^{F}}}{u_{w}{\left(1\right)}}$,
the \emph{Poincaré polynomial} of the Coxeter group $W^{F}$. 
\begin{rem}
\label{rem:20}
If $\chi$ is the corresponding irreducible component
of $R_{T}^{G}{\left(\theta\right)}$, then $D_{\beta}{\left(q\right)}=\chi{\left(1\right)}$
(see \cite{key-9}).
\end{rem}

Let $\mathcal{B}{\left(u\right)}$ be the subalgebra of $\mathcal{A}\left(u\right)$
generated by $\left\{a_{w}\mid w\in W_{S_{\theta}}\right\} $. Then
$\mathcal{B}{\left(u\right)}$ is the generic algebra corresponding
to the Coxeter group $W_{S_{\theta}}$ and by Proposition \ref{prop:18},
there is a bijection between $\mathrm{Irr}{\left(\mathcal{B}{\left(u\right)}^{K}\right)}$
and $\mathrm{Irr}{\left(W_{S_{\theta}}\right)}$. For an irreducible
character $\overline{\varphi}$ of $\mathcal{B}{\left(u\right)}^{K}$, and the
corresponding character $\varphi$ of $W_{S_{\theta}}$,
the \emph{generic degree} is defined as follows:
\begin{equation}
d_{\varphi}{\left(u\right)}:=\frac{\overline{\varphi}{\left(1\right)}P_{\theta}{\left(u\right)}}{\sum\limits_{\substack{w\in W_{S_{\theta}}}}{u_{w}{\left(\theta\right)}^{-1}\overline{\varphi}{\left(a_{w^{-1}}\right)}\overline{\varphi}{\left(a_{w}\right)}}}\label{eq:1.7-2}
\end{equation}
where $P_{\theta}{\left(u\right)}:=\sum\limits_{\substack{w\in W_{S_{\theta}}}}{u_{w}{\left(\theta\right)}}$
is the Poincaré polynomial of the Coxeter group $W_{S_{\theta}}$. 
\begin{rem}
\label{rem:21-1}It is true that $d_{\varphi}{\left(1\right)}=\varphi{\left(1\right)}$ (\cite[Theorem 5.7]{key-10}),
$P_{\theta}{\left(u\right)}$ divides $P{\left(u\right)}$ (\cite[Proposition 35]{key-16}) and that $d_{\varphi}{\left(u\right)}=\frac{1}{c_{\varphi}}f_{\varphi}$
where $f_{\varphi}\in\mathbb{Z}[u]$ is a monic polynomial and $c_{\varphi}\in\mathbb{N}$, both depending on $\varphi$ (\cite[Corollary 9.3.6]{key-11}). 
\end{rem}
The group $D$, defined in \ref{eq:1.6-1}, acts as a group of automorphisms
of $\mathcal{B}{\left(u\right)}^{K}$ via $a_{w}\mapsto a_{dwd^{-1}}$,
$d\in D$, $w\in W_{S_{\theta}}$. Thus, for each $d\in D$, if $\overline{\varphi}\in\mathrm{Irr}{\left(\mathcal{B}{\left(u\right)}^{K}\right)}$,
the character $\overline{\varphi}^{d}$ of $\mathcal{B}{\left(u\right)}^{K}$determined
by $\overline{\varphi}^{d}{\left(a_{w}\right)}:=\overline{\varphi}{\left(a_{dwd^{-1}}\right)}$
is irreducible too. 
\begin{prop}
\label{prop:20-1}\emph{(\cite[Theorem 3.13]{key-9}). }Let $\overline{\beta}\in\mathrm{Irr}{\left(\mathcal{A}{\left(u\right)}^{K}\right)}$, $\beta$ the corresponding character in $S_{\theta}$,
$\overline{\varphi}$ an irreducible component of $\mathrm{Res}_{\mathcal{B}{\left(u\right)}^{K}}^{\mathcal{A}{\left(u\right)}^{K}}{\left(\overline{\beta}\right)}$, $\varphi$ the corresponding character in $W_{S_{\theta}}$
and $C:=\left\{d\in D\mid\overline{\varphi}^{d}=\overline{\varphi}\right\}$. Then 
\begin{equation}
D_{\beta}{\left(u\right)}=\frac{P{\left(u\right)}}{P_{\theta}{\left(u\right)}\left|C\right|}d_{\varphi}{\left(u\right)}.\label{eq:1.6-2}
\end{equation}
\end{prop}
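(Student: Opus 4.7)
The plan is to recast the claim as an identity of Schur elements for the natural symmetrizing form $\tau(a_w)=\delta_{w,1}$ on each Hecke algebra. For an irreducible character $\chi$, the Schur element is $c_\chi=\chi(1)^{-1}\sum_w u_w(\theta)^{-1}\chi(a_{w^{-1}})\chi(a_w)$, and Clifford theory (expanded below) gives $\overline{\beta}(1)=[D:C]\,\overline{\varphi}(1)$. Combined with formulas \ref{eq:1.6} and \ref{eq:1.7-2}, the claim becomes equivalent to
\[
c_{\overline{\beta}}^{\mathcal{A}(u)^K} \;=\; |C|\cdot c_{\overline{\varphi}}^{\mathcal{B}(u)^K}.
\]

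I would then invoke Clifford theory on the crossed-product structure $\mathcal{A}(u)^K\cong\mathcal{B}(u)^K\rtimes D$, coming from the $D$-action $a_{w'}\mapsto a_{dw'd^{-1}}$ introduced just before the statement. Because $D$ is an abelian $p'$-group (\cite[Lemma 2.9]{key-9}), the obstruction to extending $\overline{\varphi}$ from $\mathcal{B}(u)^K$ to $\mathcal{B}(u)^K\rtimes C$ vanishes, producing an extension $\widetilde{\varphi}$ of $\overline{\varphi}$ with $\overline{\beta}=\mathrm{Ind}_{\mathcal{B}(u)^K\rtimes C}^{\mathcal{A}(u)^K}\widetilde{\varphi}$. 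Since the symmetrizing form $a_w\mapsto\delta_{w,1}$ on $\mathcal{A}(u)^K$ restricts to the analogous form on $\mathcal{B}(u)^K\rtimes C$, the standard induction formula for Schur elements in symmetric algebras yields $c_{\overline{\beta}}^{\mathcal{A}(u)^K}=c_{\widetilde{\varphi}}^{\mathcal{B}(u)^K\rtimes C}$.

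The remaining step is to compute $c_{\widetilde{\varphi}}^{\mathcal{B}(u)^K\rtimes C}$ directly. Every element of $C\ltimes W_{S_\theta}$ writes uniquely as $dw'$ with $d\in C$, $w'\in W_{S_\theta}$, and by item \ref{enu:1} of Definition \ref{def:14} we have $a_{dw'}=a_d a_{w'}$ and $u_{dw'}(\theta)=u_{w'}(\theta)$ (elements of $D$ preserve $\Gamma^+$ by \ref{eq:1.6-1}). Since $d\in C$ stabilizes $\overline{\varphi}$, the operator $\rho(a_d)$ intertwines the absolutely irreducible module of $\overline{\varphi}$ with itself, so Schur's lemma gives $\rho(a_d)=\lambda_d\cdot\mathrm{Id}$ and hence $\widetilde{\varphi}(a_{dw'})=\lambda_d\,\overline{\varphi}(a_{w'})$. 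Pairing with $\widetilde{\varphi}(a_{(dw')^{-1}})=\lambda_d^{-1}\,\overline{\varphi}(a_{w'^{-1}})$ cancels the scalars, and summing over $(d,w')\in C\times W_{S_\theta}$ collapses the $d$-sum to a factor of $|C|$, giving $c_{\widetilde{\varphi}}^{\mathcal{B}(u)^K\rtimes C}=|C|\cdot c_{\overline{\varphi}}^{\mathcal{B}(u)^K}$, as required.

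The main technical obstacle is the justification of the Schur-element induction formula $c_{\overline{\beta}}^{\mathcal{A}(u)^K}=c_{\widetilde{\varphi}}^{\mathcal{B}(u)^K\rtimes C}$: while classical for group algebras, the symmetric-algebra version requires carefully comparing the dual bases of $\mathcal{A}(u)^K$ and $\mathcal{B}(u)^K\rtimes C$ with respect to their symmetrizing forms and verifying compatibility under the induction functor (see e.g.\ \cite[Chapter 9]{key-11}). A secondary subtlety is producing the extension $\widetilde{\varphi}$, for which one must check that the abelian $p'$-group hypothesis on $C$ indeed kills the relevant Schur-multiplier obstruction in this Hecke-algebra setting, a point that is the technical core of the argument in \cite[Theorem 3.13]{key-9}.
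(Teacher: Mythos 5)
The paper does not actually prove this proposition: it is stated as a citation to \cite[Theorem~3.13]{key-9} and used as a black box, so there is no ``paper's own proof'' to compare against. Your blind proposal is therefore to be judged on its own merits, as a reconstruction of the Howlett--Kilmoyer argument.

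Your reduction is correct and cleanly organized. Writing $c_\chi:=\chi(1)^{-1}\sum_{w}u_w(\theta)^{-1}\chi(a_{w^{-1}})\chi(a_w)$, formulas \ref{eq:1.6} and \ref{eq:1.7-2} immediately give $D_\beta(u)=P(u)/c_{\overline{\beta}}$ and $d_\varphi(u)=P_\theta(u)/c_{\overline{\varphi}}$, so \ref{eq:1.6-2} is literally $c_{\overline{\beta}}=|C|\,c_{\overline{\varphi}}$. Your final computation for the intermediate algebra $\mathcal{B}(u)^K\rtimes C$ is also sound: relation \ref{enu:1} gives $a_{dw'}=a_da_{w'}$, elements of $D$ preserve $\Gamma^+$ by \ref{eq:1.6-1} so $u_{dw'}(\theta)=u_{w'}(\theta)$, and since $d\in C$ stabilizes $\overline{\varphi}$, Schur's lemma forces $\rho(a_d)$ scalar, so the scalars $\lambda_d$ cancel pairwise and the $d$-sum contributes exactly $|C|$. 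That gives $c_{\widetilde{\varphi}}^{\mathcal{B}(u)^K\rtimes C}=|C|\,c_{\overline{\varphi}}^{\mathcal{B}(u)^K}$, and the induction formula $c_{\overline{\beta}}^{\mathcal{A}(u)^K}=c_{\widetilde{\varphi}}^{\mathcal{B}(u)^K\rtimes C}$ is a genuine theorem for symmetric algebras with compatible forms and free ring extensions (as you note, \cite[Ch.~9]{key-11} covers this); its hypotheses hold here since $\{a_{d_i}\}$ for coset representatives $d_i$ of $C$ in $D$ gives a free basis of $\mathcal{A}(u)^K$ over $\mathcal{B}(u)^K\rtimes C$.

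The one place where your reasoning, as opposed to your gap-flagging, is actually off is the existence of the extension $\widetilde{\varphi}$. You attribute the vanishing of the Clifford obstruction to $D$ (hence $C$) being an abelian $p'$-group. But over $K=\mathbb{C}(u)$, characteristic zero, the $p'$ condition is irrelevant, and for a merely abelian $C$ the class in $H^2(C,K^\times)$ need not vanish (it does for cyclic $C$, but $D$ can be a larger elementary abelian $2$-group in type $B_n$). What actually guarantees the extension is the multiplicity-freeness of $\mathrm{Res}^{\mathcal{A}(u)^K}_{\mathcal{B}(u)^K}(\overline{\beta})$, recorded in Remark \ref{rem:21} of the paper as $\mathrm{Res}(\overline{\beta})=\frac{1}{|C|}\sum_{d\in D}\overline{\varphi}^d$; for abelian $C$, multiplicity-one restriction to $\mathcal{B}$ is equivalent to the triviality of the $2$-cocycle and hence to the existence of $\widetilde{\varphi}$. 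Proving that multiplicity-freeness, rather than anything about $p'$-groups, is the genuine technical core of \cite[Theorem~3.13]{key-9}, and your sketch does not supply it. With that corrected, your strategy is a faithful account of the standard Clifford-theoretic proof.
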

\begin{rem}
\label{rem:21}
One can show that, if $\overline{\beta}$ and $\overline{\varphi}$ are
as in Proposition \ref{prop:20-1}, then $\mathrm{Res}_{\mathcal{B}{\left(u\right)}^{K}}^{\mathcal{A}{\left(u\right)}^{K}}{\left(\overline{\beta}\right)}=\frac{1}{\left|C\right|}\sum\limits_{\substack{d\in D}}{\overline{\varphi}^{d}}$
(\cite[proof of Theorem 3.13]{key-9}). Thus, by Proposition \ref{prop:18}, we have that 
\begin{equation}
\label{eq:index1}
\beta{\left(1\right)}=\frac{\left|D\right|}{\left|C\right|}\varphi{\left(1\right)}.
\end{equation}
Then Proposition \ref{prop:20-1} tells us that 
\begin{equation}
\label{eq:index2}
D_{\beta}{\left(1\right)}=\left[W^{F}:S_{\theta}\right]{\beta{\left(1\right)}}.
\end{equation}
\end{rem}

\subsubsection{Principal series representations of $\mathrm{Sp}{\left(2n,\mathbb{F}_{q}\right)}$}\label{subsec:1}

Now we deal with the case 
\[
G=\mathrm{Sp}{\left(2n,\overline{\mathbb{F}_{q}}\right)}:=\left\{ A\in\mathfrak{gl}{\left(2n,\overline{\mathbb{F}_{q}}\right)}\mid A^{t}JA=A\right\} 
\]
where 
\[
J=\begin{pmatrix}
 & & & & & 1\\
 & & & & \iddots &\\
 & & & 1 & &\\
 & & -1 & & &\\
 & \iddots & & & &\\
-1 & & & & &
\end{pmatrix}
\]
relatively to a basis $\overrightarrow{\mathcal{B}}=\left\{e_{1},\ldots,e_{n},e_{-n},\ldots,e_{-1}\right\}$, so in this subsection, $G^{F}=\mathrm{Sp}(2n,\mathbb{F}_{q})$.
Let us consider the maximally split torus 
\[
T=\left\{ \mathrm{diag}{\left(\lambda_{1},\ldots,\lambda_{n},\lambda_{n}^{-1},\ldots,\lambda_{1}^{-1}\right)}\mid\lambda_{i}\in\overline{\mathbb{F}_{q}}^{\times}\right\}.
\]
Therefore, $T^{F}\cong\left(\mathbb{F}_{q}^{\times}\right)^{n}$ and
$\widehat{T^{F}}\cong\mathbb{Z}_{q-1}^{n}$ because every $\theta\in\widehat{T^{F}}$
is characterized (not canonically) by a $n$-tuple of exponents modulo $q-1$. The Weyl
group $W_{n}$ is isomorphic to $S_{n}\ltimes\boldsymbol{\mu}_{\boldsymbol{2}}^{n}$, where
$\boldsymbol{{\mu}_{2}}:=\left\{ 1,-1\right\} $ and $S_{n}$ is the symmetric group, so $W_{n}$ is a Coxeter group of type $B_{n}$. It turns out that $W_{n}^{F}=W_{n}$, and that $W_{n}$ acts on $\widehat{T^{F}}$ 
as follows:
\begin{equation}
\begin{aligned}
\label{eq:1.7-1}
\varphi:\left(S_{n}\ltimes\boldsymbol{\mu}_{\boldsymbol{2}}^{n}\right)\times\widehat{T^{F}}&\rightarrow\widehat{T^{F}}\\
\left(\left(\sigma,\left(\varepsilon_{1},\ldots,\varepsilon_{n}\right)\right),\left(k_{1},\ldots,k_{n}\right)\right)&\mapsto\left(\varepsilon_{1}k_{\sigma\left(1\right)},\ldots,\varepsilon_{n}k_{\sigma\left(n\right)}\right)
\end{aligned}
\end{equation}
where $k_{i}\in\mathbb{Z}_{q-1}$ for $i=1,\ldots,n$. By this description
of the action of $W_{n}$ on $\widehat{T^{F}}$, we deduce that 
\begin{prop}
\label{prop:20}
Every $W_{n}$-orbit in $\widehat{T^{F}}$ can be represented
by a character 
\begin{equation}
\theta\sim\left(\stackrel{\lambda_{1}}{\overbrace{k_{1},\ldots,k_{1}}},\overset{\lambda_{2}}{\overbrace{k_{2},\ldots,k_{2}}},\ldots,\overset{\lambda_{l}}{\overbrace{k_{l},\ldots,k_{l}}},\overset{\alpha_{1}}{\overbrace{0,\ldots,0}},\overset{\alpha_{\epsilon}}{\overbrace{\frac{q-1}{2},\ldots,\frac{q-1}{2}}}\right)\label{eq:1.7}
\end{equation}
where $\lambda=\left(\lambda_{1}\geq\ldots\geq\lambda_{l}\right)\vdash c$ is a partition of a natural number $c\leq n$,
$\left|\lambda\right|+\alpha_{1}+\alpha_{\epsilon}=n$, $k_{i}\in Q:=\left\{ 1,\ldots,\frac{q-3}{2}\right\} $
and $k_{i}\neq k_{j}$ for any $i,j=1,\ldots,l$, and $k_{i}<k_{j}$
if $\lambda_{i}=\lambda_{j}$.
\end{prop}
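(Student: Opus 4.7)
The plan is to run a two-stage normalization of a character $\theta = (k_1,\ldots,k_n) \in \widehat{T^F} \cong \mathbb{Z}_{q-1}^n$ under the $W_n = S_n \ltimes \boldsymbol{\mu}_2^n$-action described in \ref{eq:1.7-1}: first use the sign-flip factor $\boldsymbol{\mu}_2^n$ to put each entry into a preferred set of representatives modulo negation, then use $S_n$ to sort the entries.

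First I would analyze the action of $\boldsymbol{\mu}_2$ on $\mathbb{Z}_{q-1}$ by $k \mapsto -k$. Since $q$ is odd, $q-1$ is even, so the fixed points of this involution are exactly $0$ and $\tfrac{q-1}{2}$, and every other class $k \in \mathbb{Z}_{q-1}$ has an orbit of size two, namely $\{k,-k\}$, with a unique representative in $Q = \{1, \ldots, \tfrac{q-3}{2}\}$. Applying the factor $\boldsymbol{\mu}_2^n$ component-wise, I may therefore assume, up to $W_n$-action, that each $k_i$ lies in the distinguished set $\{0,\tfrac{q-1}{2}\} \cup Q$.

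Next, I would use the $S_n$-action to permute the coordinates so that equal values are grouped into consecutive blocks. Let $\alpha_1$ be the number of indices with $k_i = 0$ and $\alpha_\epsilon$ the number with $k_i = \tfrac{q-1}{2}$; these two counts are $W_n$-invariants since $0$ and $\tfrac{q-1}{2}$ are $\boldsymbol{\mu}_2$-fixed. For the remaining coordinates, the multiset $\{k_i\} \cap Q$ is a $W_n$-invariant (after choosing the canonical representative in $Q$): letting $\lambda_1 \geq \lambda_2 \geq \cdots \geq \lambda_l$ be the multiplicities with which the distinct values $k_{(1)},\ldots,k_{(l)} \in Q$ appear, we get a partition $\lambda \vdash c$ for $c = n - \alpha_1 - \alpha_\epsilon \leq n$. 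I then place the blocks in decreasing order of $\lambda_i$, pushing the $0$-block and $\tfrac{q-1}{2}$-block to the end, which yields the form \ref{eq:1.7}.

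The main step to settle is uniqueness of the representative: among blocks of equal size $\lambda_i = \lambda_j$, any permutation of the underlying values $k_{(i)}, k_{(j)} \in Q$ stays in the $W_n$-orbit, so I break this ambiguity by requiring $k_{(i)} < k_{(j)}$ whenever $\lambda_i = \lambda_j$, making the tuple $(\lambda_i, k_{(i)})$ lexicographically determined. A short check confirms that two characters of the form \ref{eq:1.7} lying in the same $W_n$-orbit must have the same partition $\lambda$, the same $(\alpha_1,\alpha_\epsilon)$, and the same ordered sequence of values $k_{(i)} \in Q$, hence are equal. This is really the only non-routine point; everything else is bookkeeping on the explicit $S_n \ltimes \boldsymbol{\mu}_2^n$-action.
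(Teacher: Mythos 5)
Your proof is correct and fills in precisely the elementary details the paper leaves implicit: the paper states Proposition \ref{prop:20} as an immediate consequence of the explicit description \ref{eq:1.7-1} of the $W_n = S_n \ltimes \boldsymbol{\mu}_{\boldsymbol{2}}^n$-action on $\widehat{T^F}\cong\mathbb{Z}_{q-1}^n$, offering no separate argument, and your two-stage normalization (sign-flip to push each coordinate into $\{0,\tfrac{q-1}{2}\}\cup Q$, then permutation to sort into blocks, with the lexicographic tie-break on $(\lambda_i,k_i)$) is exactly the canonical-form argument that deduction requires. The identification of the $\boldsymbol{\mu}_2$-fixed points $0$ and $\tfrac{q-1}{2}$ and of $Q$ as a set of orbit representatives, and the observation that $\alpha_1$, $\alpha_\epsilon$, and the multiset of $Q$-representatives are $W_n$-invariants, are all right.
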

There is an easy description of $S_{\theta}$ for a character $\theta$
of the form \ref{eq:1.7}: in fact, from how $W_{n}$ acts on $\widehat{T^{F}}$,
it is easy to see that 
\[
S_{\theta}=S_{\lambda,\alpha_{1},\alpha_{\epsilon}}:={\left(\prod\limits_{\substack{i=1}}^{l}{S_{\lambda_{i}}}\right)}\times\ W_{\alpha_{1}}\times W_{\alpha_{\epsilon}}.
\]
This a Coxeter group of type $A_{\lambda_{1}-1}\times\cdots\times A_{\lambda_{l}-1}\times B_{\alpha_{1}}\times B_{\alpha_{\epsilon}}$.
So let $\chi\in R_{T}^{G}{\left(\theta\right)}$. By Proposition \ref{prop:20},
we can take $\theta$ of the form \ref{eq:1.7}.
\begin{defn}
\label{defn:type}
If $\beta$ is the irreducible character of $S_{\theta}$ corresponding
to $\chi$, define the $4$-tuple 
\begin{equation}
\tau:=\left(\lambda,\alpha_{1},\alpha_{\epsilon},\beta\right)\label{eq:1.9}
\end{equation}
as the \emph{type} of $\chi$. If $\tau=\left(\lambda,\alpha_{1},\alpha_{\epsilon},\beta\right)$ and $\varepsilon$ is the sign character of $S_{\lambda,\alpha_{1}, \alpha_{\epsilon}}$, define the \emph{type dual to $\tau$} as 
\begin{equation}
\label{eq:1.10-1}
\tau^{\prime}:=\left(\lambda,\alpha_{1},\alpha_{\epsilon},\varepsilon\beta\right)
\end{equation}
\end{defn}
\noindent
$\mathbf{Notation}$: We write $\tau{\left(\chi\right)}$ for the type of a character $\chi$ 
and $\chi_{\tau}$ to denote a character of a fixed type $\tau$.
\begin{rem}
\label{rem:29}
It follows from the definition of type and Proposition \ref{prop:19} that $\tau{\left(1_{G^{F}}\right)}=\left(\hat{0},n,0,\beta \right)$, where $\beta(1)=1$. In fact, since $1_{G^{F}}\in R^{G}_{T}(1_{T^{F}})$, using Frobenius reciprocity and equation \ref{eq:1.5}, we get
\[
\beta{\left(1\right)}=\left\langle 1_{G^{F}},R_{T}^{G}{\left(1_{T^{F}}\right)}\right\rangle_{G^{F}}=\left\langle 1_{B^{F}},1_{B^{F}}\right\rangle_{B^{F}}=1.
\]
In this case, $B^{F}$ is the Borel subgroup of upper triangular matrices in $G^{F}$.
\end{rem}
\begin{rem}
\label{rem:22}
If $\chi_{1}$ and $\chi_{2}$ are irreducible constituents
of $R_{T}^{G}{\left(\theta_{1}\right)}$ and $R_{T}^{G}{\left(\theta_{2}\right)}$
respectively such that $\tau{\left(\chi_{1}\right)}=\tau{\left(\chi_{2}\right)}=\left(\lambda,\alpha_{1},\alpha_{\epsilon},\beta\right)$,
then by Proposition \ref{prop:19} we have that $\left\langle \chi_{1},R_{T}^{G}{\left(\theta_{1}\right)}\right\rangle _{G^{F}}=\left\langle \chi_{2},R_{T}^{G}{\left(\theta_{2}\right)}\right\rangle _{G^{F}}=\beta{\left(1\right)}$.
\end{rem}
\begin{prop}
\label{prop:25}
If $\chi\in R_{T}^{G}{\left(\theta\right)}$ with $\theta$
of the form \ref{eq:1.7}, then $\chi{\left(1\right)}$ only depends
on $\tau{\left(\chi\right)}=\left(\lambda,\alpha_{1},\alpha_{\epsilon},\beta\right)$.
\end{prop}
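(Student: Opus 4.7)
The plan is to use Remark \ref{rem:20} to identify $\chi(1)$ with $D_{\beta}(q)$ and then combine Proposition \ref{prop:20-1} with a direct structural analysis of $S_{\theta}$ for $\theta$ of the form \ref{eq:1.7} to express $\chi(1)$ entirely in terms of data intrinsic to the $4$-tuple $\tau$.

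First I would show that for $\theta$ of form \ref{eq:1.7} the subgroup $D$ in the decomposition $S_{\theta}=D\ltimes W_{S_{\theta}}$ is trivial. Unpacking the description of the $W_{n}$-action in \ref{eq:1.7-1}, every reflection of $W_{n}$ that stabilises $\theta$ is of one of the types $s_{e_{a}-e_{b}}$ (for $a,b$ in the same block of $\theta$), $s_{e_{a}+e_{b}}$ (for $a,b$ both in the $\alpha_{1}$- or both in the $\alpha_{\epsilon}$-block), or $s_{2e_{a}}$ (for $a$ in the $\alpha_{1}$- or $\alpha_{\epsilon}$-block). These reflections all belong to $\Gamma$, and they generate $S_{\theta}=S_{\lambda,\alpha_{1},\alpha_{\epsilon}}$ because each irreducible factor $S_{\lambda_{i}}$, $W_{\alpha_{1}}$, $W_{\alpha_{\epsilon}}$ is itself a Coxeter group whose simple reflections appear in the list above. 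Hence $W_{S_{\theta}}=S_{\theta}$ and consequently $D=\left\{1\right\}$, so $C=\left\{1\right\}$ and $\varphi=\beta$ in Proposition \ref{prop:20-1}. Combined with Remark \ref{rem:20}, this yields
\[
\chi(1)=D_{\beta}(q)=\frac{P(q)}{P_{\theta}(q)}\,d_{\beta}(q).
\]

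Now I would check that each of the three factors on the right-hand side is determined by $\tau$. The polynomial $P(u)$ is the Poincar\'e polynomial of $W_{n}$, a Coxeter group of type $B_{n}$, so it depends only on $n$. The polynomial $P_{\theta}(u)=\sum_{w\in W_{S_{\theta}}}u_{w}(\theta)$ depends on $W_{S_{\theta}}=S_{\lambda,\alpha_{1},\alpha_{\epsilon}}$ (whose Coxeter type is visibly read off $\lambda,\alpha_{1},\alpha_{\epsilon}$) and on the parameters $u_{\alpha}(\theta)=u^{c_{\alpha}(\theta)}$ for $\alpha\in\Gamma$. The generic degree $d_{\beta}(u)$ in \ref{eq:1.7-2} is computed from the same parameters together with the character $\overline{\beta}\in\mathrm{Irr}{\left(\mathcal{B}(u)^{K}\right)}$ associated to $\beta$ via Proposition \ref{prop:18}; in particular, it depends on $\beta$ intrinsically as a representation of $W_{S_{\theta}}$.

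The main technical point, and the only real obstacle, is verifying that the Howlett--Lehrer parameters $c_{\alpha}(\theta)$ for $\alpha\in\Gamma$ depend only on the block of $\theta$ in which the positions indexed by $\alpha$ lie, not on the specific values $k_{i}\in Q$. Since $c_{\alpha}(\theta)$ is determined by the restriction of $\theta$ to the rank-one subtorus associated to $\alpha$, and for $\theta$ of form \ref{eq:1.7} this restriction only records whether the block is a $\lambda_{i}$-block, the $\alpha_{1}$-block, or the $\alpha_{\epsilon}$-block, the values $c_{\alpha}(\theta)$ are constant across characters of the same type. Combining this with the previous paragraph, all three factors $P(q)$, $P_{\theta}(q)$, $d_{\beta}(q)$ are invariants of $\tau=(\lambda,\alpha_{1},\alpha_{\epsilon},\beta)$, giving the proposition.
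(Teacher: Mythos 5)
Your plan, reducing to the factorisation $\chi(1)=D_{\beta}(q)$ via Remark \ref{rem:20} and Proposition \ref{prop:20-1}, is a reasonable alternative to the paper's route (which argues directly that $\Gamma$ depends only on $(\lambda,\alpha_{1},\alpha_{\epsilon})$ and then invokes Proposition \ref{prop:18}, \ref{eq:1.6} and Definition \ref{def:14}). But the central structural claim you make --- that $W_{S_{\theta}}=S_{\theta}$ and hence $D=\{1\}$ --- is false whenever $\alpha_{\epsilon}>0$, and the argument does not survive this.

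The error is in the line ``$s_{2e_{a}}$ (for $a$ in the $\alpha_{1}$- or $\alpha_{\epsilon}$-block)\ldots These reflections all belong to $\Gamma$.'' For $a$ in the $\alpha_{\epsilon}$-block, the entry of $\theta$ is $\frac{q-1}{2}$, so the restriction $\widetilde{\theta}$ of $\theta$ to the rank-one torus $T_{\epsilon_{a}^{2}}$ is the quadratic character $\lambda\mapsto f(\lambda)^{(q-1)/2}$, which is \emph{nontrivial}; hence $q_{\epsilon_{a}^{2}}(\theta)=1$ and $\epsilon_{a}^{\pm 2}\notin\Gamma$. (The paper's explicit description \ref{eq:descrGamma} of $\Gamma$ confirms this: set $B$ ranges only over $m+1\le i\le m+\alpha_{1}$, not over the $\alpha_{\epsilon}$-block.) Consequently, on the $\alpha_{\epsilon}$-block $W_{S_{\theta}}$ is of type $D_{\alpha_{\epsilon}}$ rather than $B_{\alpha_{\epsilon}}$, and $W_{S_{\theta}}$ has index $2$ in $S_{\theta}$, exactly as recorded in Remark \ref{rem:26}. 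Thus $D$ has order $2$ for $\alpha_{\epsilon}>0$, the displayed formula
\[
\chi(1)=\frac{P(q)}{P_{\theta}(q)}d_{\beta}(q)
\]
is missing the factor $\tfrac{1}{\left|C\right|}$ from \ref{eq:1.6-2}, and more fundamentally $\varphi$ is \emph{not} $\beta$ but only an irreducible component of $\mathrm{Res}_{W_{S_{\theta}}}^{S_{\theta}}(\beta)$ (cf.\ \ref{eq:1.10} vs.\ \ref{eq:1.11} in Remark \ref{rem:27}). To repair the argument along your lines you would additionally need to argue that $\left|C\right|$ and the choice of $\varphi$ up to the $D$-action are invariants of $\tau$, which is true but requires the same explicit identification of $\Gamma$ that the paper carries out; at that point you have essentially reproduced the paper's proof with extra steps. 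Your final paragraph on the constancy of the Howlett--Lehrer parameters $c_{\alpha}(\theta)$ is essentially the correct key observation, but it should be paired with the fact (cited by the paper from \cite[Section \S 4]{key-9}) that $q_{\alpha}(\theta)\in\{1,q\}$, so that knowing $\Gamma$ pins down all the $c_{\alpha}(\theta)$ simultaneously.
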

\begin{proof}
If $\alpha\in\Phi$, by \cite[Section §4]{key-9}, $q_{\alpha}{\left(\theta\right)}=1$
or $q_{\alpha}{\left(\theta\right)}=q$, so according to Proposition
\ref{prop:18}, equation \ref{eq:1.6} and Definition \ref{def:14}, it is
sufficient to prove that the set $\Gamma$ defined as in \ref{eq:1.5-1} only depends on the triple $\left(\lambda,\alpha_{1},\alpha_{\epsilon}\right)$.
Moreover, again by \cite[Section §4]{key-9}, $q_{\alpha}{\left(\theta\right)}=q_{\widetilde{\alpha}}{\left(\widetilde{\theta}\right)}$,
where $\widetilde{\alpha}$ and $\widetilde{\theta}$ are the restriction
of $\alpha$ and $\theta$ on the maximal torus $T_{\alpha}$ in $\left\langle X_{\alpha},X_{\alpha^{-1}}\right\rangle \cong\mathrm{SL}{\left(2,\mathbb{F}_{q}\right)}$,
and $q=q_{\widetilde{\alpha}}{\left(\widetilde{\theta}\right)}$ if
and only if $\widetilde{\theta}$ is the trivial character of $T_{\alpha}$. 

Let us give an explicit description of the root subgroups $X_{\alpha}$
in $\mathrm{Sp}{\left(2n,\mathbb{F}_{q}\right)}$ and of the corresponding $T_{\alpha}$'s: recall that $\Phi=\left\{ \epsilon_{i}^{\pm1}\epsilon_{j}^{\pm1}\mid1\leq i<j\leq n\right\} \cup\left\{ \epsilon_{i}^{\pm2}\mid1\leq i\leq n\right\} $,
with $\epsilon_{i}{\left(t\right)}:=t_{i}$ if $t=\mathrm{diag}{\left(t_{1},\ldots,t_{n},t_{n}^{-1},\ldots,t_{1}^{-1}\right)}$,
so
\[
\begin{aligned}
&X_{\epsilon_{i}\epsilon_{j}^{-1}}=\left\{ 1+t\left(E_{i,j}-E_{-j,-i}\right)\mid t\in\mathbb{F}_{q}\right\},\\
&X_{\epsilon_{i}^{-1}\epsilon_{j}}=\left\{ 1+t\left(E_{-i,-j}-E_{j,i}\right)\mid t\in\mathbb{F}_{q}\right\},\\
&X_{\epsilon_{i}\epsilon_{j}}=\left\{ 1+t\left(E_{i,-j}+E_{j,-i}\right)\mid t\in\mathbb{F}_{q}\right\},\\
&X_{\epsilon_{i}^{-1}\epsilon_{j}^{-1}}=\left\{ 1+t\left(E_{-i,j}+E_{-j,i}\right)\mid t\in\mathbb{F}_{q}\right\},\\
&X_{\epsilon_{i}^{2}}=\left\{ 1+tE_{i,-i}\mid t\in\mathbb{F}_{q}\right\},\\
&X_{\epsilon_{i}^{-2}}=\left\{ 1+tE_{-i,i}\mid t\in\mathbb{F}_{q}\right\};
\end{aligned}
\]
\[
\begin{aligned}
&T_{\epsilon_{i}\epsilon_{j}^{-1}}=\biggl\{
\begin{aligned}[t]
\mathrm{diag}\biggl(
&1,\ldots,1,\overset{i}{\overbrace{\lambda}},\ldots,\overset{j}{\overbrace{\lambda^{-1}}},1,\dots\\
&\ldots,1,\overset{-j}{\overbrace{\lambda}},\ldots,\overset{-i}{\overbrace{\lambda^{-1}}},1,\ldots,1\biggr)
\mid\lambda\in\mathbb{K}^{\times}\biggr\},
\end{aligned}\\
&T_{\epsilon_{i}\epsilon_{j}}=\biggl\{
\begin{aligned}[t]
\mathrm{diag}\biggl(
&1,\ldots,1,\overset{i}{\overbrace{\lambda}},\ldots,\overset{j}{\overbrace{\lambda}},1,\dots\\
&\ldots,1,\overset{-j}{\overbrace{\lambda^{-1}}},\ldots,\overset{-i}{\overbrace{\lambda^{-1}}},1,\ldots,1\biggr)\mid\lambda\in\mathbb{K}^{\times}\biggr\},
\end{aligned}\\
&T_{\epsilon_{i}^{2}}=\biggl\{ \mathrm{diag}\biggl(1,\ldots,1,\overset{i}{\overbrace{\lambda}},\ldots,\overset{-i}{\overbrace{\lambda^{-1}}},1,\ldots,1\biggr)\mid\lambda\in\mathbb{K}^{\times}\biggr\}.
\end{aligned}
\]
Remember that if $\theta\in\mathrm{Irr}{\left(T^{F}\right)}$, $\theta\sim\left(m_{1},\ldots,m_{n}\right)\in\mathbb{Z}_{q-1}^{n}$ and 
\[
t=\mathrm{diag}{\left(\lambda_{1},\ldots,\lambda_{n},\lambda_{n}^{-1},\ldots,\lambda_{1}^{-1}\right)}
\]
is an element of $T^{F}$, then
\begin{equation}
\begin{aligned}
\theta:T^{F}&\rightarrow\mathbb{C}^{\times}\\
t&\mapsto\theta{\left(t\right)}:=\prod\limits_{\substack{i=1}}^{n}{\lambda_{i}^{m_{i}}}.
\end{aligned}
\end{equation}
where we denote with the same symbol the image of $\lambda_{i}$ via a fixed homomorphism $f:\mathbb{F}_{q}^{\times}\hookrightarrow\mathbb{C}^{\times}$ for any $i=1,\dots,n$.
Therefore, it is easy to see that $\Gamma=A\cup B\cup C$, where 
\begin{equation}
\label{eq:descrGamma}
\begin{aligned}
&A:=\left\{\left(\epsilon_{i}\epsilon_{i+1}^{-1}\right)^{\pm1}\mid i\neq\lambda_{1},\lambda_{1}+\lambda_{2},\ldots,\sum\limits_{\substack{i=1}}^{l}{\lambda_{i}},\sum\limits_{\substack{i=1}}^{l}{\lambda_{i}+\mu_{1}}\right\},\\
&B:=\left\{ \epsilon_{i}^{\pm2}\mid m+1\leq i\leq m+\alpha_{1}\right\},\\
&C:=\left\{\left(\epsilon_{i}\epsilon_{i+1}\right)^{\pm1}\mid m+1\leq i\leq n-1,\,i\neq m+\alpha_{1}\right\} 
\end{aligned}
\end{equation}
and the claim follows. 
\end{proof}

\begin{rem}
\label{rem:26}
It is easy to see from the definition of $W_{{S}_{\theta}}$ and \ref{eq:descrGamma}
that $W_{S_{\theta}}=\left(\prod\limits_{\substack{i=1}}^{l}{S_{\lambda_{i}}}\right)\times W_{\alpha_{1}}\times\left(S_{\alpha_{\epsilon}}\ltimes V_{\alpha_{\epsilon}}\right)$ where
\[
V_{\alpha_{\epsilon}}:=\left\{ \left(x_{1},\ldots,x_{\alpha_{\epsilon}}\right)\in\boldsymbol{\mu}_{\boldsymbol{2}}^{\alpha_{\epsilon}}\mid\prod\limits_{\substack{i=1}}^{\alpha_{\epsilon}}{x_{i}}=1\right\}
\]
so $W_{S_{\theta}}$ is a Coxeter group of type $A_{\lambda_{1}-1}\times\cdots\times A_{\lambda_{l}-1}\times B_{\alpha_{1}}\times D_{\alpha_{\epsilon}}$
of index smaller than $2$ in $S_{\theta}$. It follows that
\begin{equation}
\label{eq:1.100}
P_{\theta}{\left(q\right)}=\left(\prod_{i=1}^{l}P_{\lambda_{i}}{\left(q\right)}\right){P_{\alpha_{1}}{\left(q\right)}P_{\alpha_{\epsilon}}{\left(q\right)}}
\end{equation}
where $P_{\lambda_{i}}$, $P_{\alpha_{1}}$ and $P_{\alpha_{\epsilon}}$ are Poincaré polynomials of type $A_{\lambda_{i}-1}$ for $i=1,\dots,l$, $B_{\alpha_{1}}$ and $D_{\alpha_{\epsilon}}$ respectively.
\end{rem}

\begin{rem}
\label{rem:27}
Let $\chi_{\tau}$ be such that $\tau=\left(\lambda,\alpha_{1},\alpha_{\epsilon},\beta\right)$
and $\varphi$ one of the (at most two for Remark \ref{rem:26})
irreducible components of $\mathrm{Res}_{W_{S_{\theta}}}^{S_{\theta}}\left(\beta\right)$.
Then 
\[
\varphi=\left(\bigotimes\limits_{\substack{i=1}}^{l}{\varphi_{i}}\right)\otimes\varphi_{\alpha_{1}}\otimes\varphi_{\alpha_{\epsilon}}
\]
with $\varphi_{i}\in\mathrm{Irr}{\left(S_{\lambda_{i}}\right)}$ for
$i=1,\ldots,l$, $\varphi_{\alpha_{1}}\in\mathrm{Irr}{\left(W_{\alpha_{1}}\right)}$
and $\varphi_{\alpha_{\epsilon}}\in\mathrm{Irr}{\left(S_{\alpha_{\epsilon}}\ltimes V_{\alpha_{\epsilon}}\right)}$.
It follows from \ref{eq:1.6-2}, \ref{eq:1.100} and Remark \ref{rem:20} and \ref{rem:21} that 
\begin{equation}
\chi_{\tau}{\left(1\right)}=\frac{P{\left(q\right)}}{2}\left(\prod_{i=1}^{l}{\frac{d_{\varphi_{i}}{\left(q\right)}}{P_{\lambda_{i}}{\left(q\right)}}}\right)\frac{d_{\varphi_{\alpha_{1}}}{\left(q\right)}}{P_{\alpha_{1}}{\left(q\right)}}\frac{d_{\varphi_{\alpha_{\epsilon}}}{\left(q\right)}}{P_{\alpha_{\epsilon}}{\left(q\right)}}\label{eq:1.10}
\end{equation}
if $\varphi=\mathrm{Res}_{W_{S_{\theta}}}^{S_{\theta}}{\left(\beta\right)}$ and $\alpha_{\epsilon}\neq0$,
and 
\begin{equation}
\chi_{\tau}{\left(1\right)}=P{\left(q\right)}\left(\prod_{i=1}^{l}{\frac{d_{\varphi_{i}}{\left(q\right)}}{P_{\lambda_{i}}{\left(q\right)}}}\right)\frac{d_{\varphi_{\alpha_{1}}}{\left(q\right)}}{P_{\alpha_{1}}{\left(q\right)}}\frac{d_{\varphi_{\alpha_{\epsilon}}}{\left(q\right)}}{P_{\alpha_{\epsilon}}{\left(q\right)}}\label{eq:1.11}
\end{equation}
otherwise. Notice that, in this case, $P{\left(q\right)}$ is a Poincaré polynomial of type $B_{n}$. Since $\chi_{\tau}{\left(1\right)}$ divides $\left|\mathrm{Sp}{\left(2n,\mathbb{F}_{q}\right)}\right|$
as an integer number for any possible value of $q$, we deduce from Remark \ref{rem:21-1} that $\frac{\left|\mathrm{Sp}{\left(2n,\mathbb{F}_{q}\right)}\right|}{\chi_{\tau}{\left(1\right)}}\in\mathbb{Z}[q]$.
\end{rem}
\begin{rem}
\label{rem:28}
For odd $q\geq3$, since $\mathrm{Sp}\left(2n,\mathbb{F}_{q}\right)$ is a perfect group, unless $n=1$ and $q=3$, and Poincaré polynomials of Coxeter groups are always monic, by \ref{eq:1.10}, \ref{eq:1.11}, Remark \ref{rem:21-1} and by looking at the character table of $\mathrm{SL}(2,\mathbb{F}_{q})$ in \cite{key-12}, we have that $\chi_{\tau}{\left(1\right)}$ does not depend on $q$ if and only if $\chi_{\tau}=1_{\mathrm{Sp}\left(2n,\mathbb{F}_{q}\right)}$.
\end{rem}
We conclude this section defining the polynomial 
\begin{equation}
H_{\tau}{\left(q\right)}:=\frac{\left|\mathrm{Sp}{\left(2n,\mathbb{F}_{q}\right)}\right|}{\chi_{\tau}{\left(1\right)}}\in\mathbb{Z}[q]\label{eq:1.12}
\end{equation}
for all types $\tau$. By Remark \ref{rem:27}, one can use the formulas
contained in \cite[Theorem 10.5.2, 10.5.3]{key-11}. and the ones in \cite{key-16} or in \cite[10.5.1]{key-11} for
the Poincaré polynomial of a Coxeter group of type $A_{n-1}$, $B_{n}$ and $D_{n}$ to compute explicitly
the polynomials $H_{\tau}{\left(q\right)}$.
\begin{rem}
\label{rem:29}
Using \cite[Lemma 11.3.2]{key-7} and after some little algebra, one can prove that 
\begin{equation}
\label{eq:1.13}
H_{\tau}{\left(q^{-1}\right)}=\frac{\left(-1\right)^n}{q^{n\left(2n+1\right)}}{H_{\tau^{\prime}}}{\left(q\right)}.
\end{equation}
\end{rem}

\section{Parabolic $\mathrm{Sp}_{2n}$-character varieties}
\label{section:3}

In this section, we consider the standard presentation of the symplectic
group. So, if $\mathbb{K}$ is an algebraically closed field, $n$
a positive integer, then 
\[
\mathrm{Sp}{\left(2n,\mathbb{K}\right)}:=\left\{ A\in\mathfrak{gl}{\left(2n,\mathbb{K}\right)}\mid A^{t}JA=A\right\} 
\]
with $J:=\left(\begin{array}{cc}
0 & I_{n}\\
-I_{n} & 0
\end{array}\right)$.

\subsection{Geometry of character varieties}

Let $g\geq0$, $n>0$ be integers. Let $\mathbb{K}$
be an algebraically closed field with $\mathrm{char}\left(\mathbb{K}\right)\neq2$,
possessing a primitive $m$-th root of unity $\varphi$, for which
there exist natural numbers $m_{1},\ldots,m_{n}$ such that $\varphi^{m_{1}},\ldots,\varphi^{m_{n}}$
satisfy the following non-equalities, for every disjoint sets of indices
$J$ and $L$, not simultaneously empty, and every index $i$ in $\left\{ 1,\ldots n\right\} $:
\begin{equation}
\begin{aligned}
\prod_{j\in J}{\varphi^{m_{j}}}&\neq\prod_{l\in L}{\varphi^{m_{l}}},\\
\varphi^{2m_{i}}&\neq1.\label{eq:2.1}
\end{aligned}
\end{equation}

\begin{rem}
\label{rem:2.1}
Specializing \ref{eq:2.1} for $J=\left\{ j\right\} $
and $L=\left\{ l\right\} $ or $J=\left\{ j,l\right\} $ and $L=\emptyset$,
we have that $\varphi^{m_{j}}\neq\varphi^{m_{l}}$ and $\varphi^{m_{j}}\neq\varphi^{-m_{l}}$
respectively, so $\varphi^{\pm m_{1}},\ldots,\varphi^{\pm m_{n}}$
have to be all different. In particular, $m>n$.
\end{rem}
\begin{rem}
\label{rem:sub}
It is easy to see that the elements of any subset of $\left\{\varphi^{m_{1}},\ldots,\varphi^{m_{n}}\right\}$
satisfy conditions \ref{eq:2.1}. 
\end{rem}
\begin{example}
If $\varphi$ is a primitive $\left(2^{n}+1\right)$-th root of unity,
then it is easy to see that $\varphi,\varphi^{2},\ldots,\varphi^{2^{n-1}}$
satisfy \ref{eq:2.1}.
\end{example}
Consider the following algebraic variety over $\mathbb{K}$:
\begin{equation}
\mathcal{U}_{n}^{\xi}:=\left\{ \left(A_{1},B_{1},\ldots,A_{g},B_{g}\right)\in\mathrm{Sp}{\left(2n,\mathbb{K}\right)}^{2g}\mid\prod_{i=1}^{g}{\left[A_{i}:B_{i}\right]}=\xi\right\} =\mu^{-1}{\left(\xi\right)}\label{eq:2.2}
\end{equation}

where $\mu:\mathrm{Sp}{\left(2n,\mathbb{K}\right)}^{2g}\rightarrow\mathrm{Sp}{\left(2n,\mathbb{K}\right)}$
is given by
\begin{equation}
\mu{\left(A_{1},B_{1},\ldots,A_{g},B_{g}\right)}:=\prod_{i=1}^{g}{\left[A_{i}:B_{i}\right]}\label{eq:2.3}
\end{equation}
and $\xi=\mathrm{diag}{\left(\varphi^{m_{1}},\ldots,\varphi^{m_{n}},\varphi^{-m_{1}},\ldots,\varphi^{-m_{n}}\right)}$.
If $n=0$, we will assume that $\mathcal{U}_{n}^{\xi}=\left\{ \star\right\}$.
By Remark \ref{rem:2.1}, the centralizer of $\xi$ in $\mathrm{Sp}{\left(2n,\mathbb{K}\right)}$
is the maximal torus
\[
T=\left\{ \mathrm{diag}{\left(\lambda_{1},\ldots,\lambda_{n},\lambda_{1}^{-1},\ldots,\lambda_{n}^{-1}\right)}\mid\lambda_{i}\in\mathbb{\mathbb{K}^{\star}},\,i=1,\ldots,n\right\} 
\]
and acts by conjugation on $\mathcal{U}_{n}^{\xi}$:
\begin{equation}
\begin{aligned}
\sigma:T\times\mathcal{U}_{n}^{\xi}&\rightarrow\mathcal{U}_{n}^{\xi}\\
\left(h,\left(A_{1},B_{1},\ldots,A_{g},B_{g}\right)\right)&\mapsto\left(h^{-1}A_{1}h,h^{-1}B_{1}h,\ldots,h^{-1}A_{g}h,h^{-1}B_{g}h\right).
\end{aligned}
\end{equation}

As the center $\boldsymbol{Z}=\left\{\pm I_{2n}\right\}\leq T$ of $\mathrm{Sp}{\left(2n,\mathbb{K}\right)}$ 
acts trivially, this action induces an action
\[
\bar{\sigma}:T/\boldsymbol{Z}\times\mathcal{U}_{n}^{\xi}\rightarrow\mathcal{U}_{n}^{\xi}.
\]
In the following, if $X\in\mathcal{U}_{n}^{\xi}$, $h\in T$, we will
write $h^{-1}Xh$ instead of $\sigma{\left(h,X\right)}$ or $\overline{\sigma}{\left(h,X\right)}$.
\begin{prop}
\label{prop:2.3}
Let $X=\left(A_{1},B_{1},\ldots,A_{g},B_{g}\right)$
be an element of $\mathcal{U}_{n}^{\xi}$. Let $\boldsymbol{\mu}_{\boldsymbol{2}}^{n}$ be the
subgroup of the involution matrices in $T$. Then $T_{X}:=\mathrm{Stab}_{\sigma}{\left(X\right)}\leq\boldsymbol{\mu}_{\boldsymbol{2}}^{n}$.
\end{prop}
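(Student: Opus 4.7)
The plan is to show that any $h\in T$ fixing $X$ under simultaneous conjugation must be an involution, hence lies in $\boldsymbol{\mu}_{\boldsymbol{2}}^{n}$. First I would write $h=\mathrm{diag}{\left(\lambda_{1},\ldots,\lambda_{n},\lambda_{1}^{-1},\ldots,\lambda_{n}^{-1}\right)}\in T_{X}$. The stabilisation condition means that $h$ commutes with every $A_{i}$ and $B_{i}$, so for each eigenvalue $\lambda$ of $h$ the corresponding eigenspace $V_{\lambda}\subseteq\mathbb{K}^{2n}$ is invariant under all the $A_{i},B_{i}$, and in particular under their iterated commutator $\xi=\mu{\left(X\right)}$.

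Next, by Remark~\ref{rem:2.1} the $2n$ eigenvalues $\varphi^{\pm m_{i}}$ of $\xi$ are pairwise distinct, so each $\xi$-eigenspace is one-dimensional, spanned by $e_{i}$ (for eigenvalue $\varphi^{m_{i}}$) or $e_{n+i}$ (for eigenvalue $\varphi^{-m_{i}}$). Consequently $V_{\lambda}$ is a direct sum of such lines, and the set $S_{\lambda}\subseteq\left\{\varphi^{\pm m_{1}},\ldots,\varphi^{\pm m_{n}}\right\}$ of eigenvalues of $\xi\mid_{V_{\lambda}}$ is well-defined. Since $\xi\mid_{V_{\lambda}}$ is itself a product of commutators $\left[A_{i}\mid_{V_{\lambda}},B_{i}\mid_{V_{\lambda}}\right]$ in $\mathrm{GL}{\left(V_{\lambda}\right)}$, we have $\det{\left(\xi\mid_{V_{\lambda}}\right)}=\prod_{s\in S_{\lambda}} s=1$.

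The key combinatorial step, which I expect to be the main obstacle, is then to invoke the genericity condition~\eqref{eq:2.1}. Setting $J:=\left\{i:\varphi^{m_{i}}\in S_{\lambda},\,\varphi^{-m_{i}}\notin S_{\lambda}\right\}$ and $L:=\left\{i:\varphi^{-m_{i}}\in S_{\lambda},\,\varphi^{m_{i}}\notin S_{\lambda}\right\}$, the indices $i$ for which both or neither of $\varphi^{\pm m_{i}}$ belong to $S_{\lambda}$ contribute a trivial factor to $\prod_{s\in S_{\lambda}} s$, so the identity $\prod_{s\in S_{\lambda}} s=1$ rewrites as $\prod_{j\in J}\varphi^{m_{j}}=\prod_{l\in L}\varphi^{m_{l}}$ with $J$ and $L$ disjoint. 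Condition~\eqref{eq:2.1} then forces $J=L=\emptyset$; equivalently, for every $i$ we have $\varphi^{m_{i}}\in S_{\lambda}\Longleftrightarrow\varphi^{-m_{i}}\in S_{\lambda}$.

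Finally, fix $i\in\left\{1,\ldots,n\right\}$ and apply this pairing with $\lambda=\lambda_{i}$. Since $e_{i}\in V_{\lambda_{i}}$, the value $\varphi^{m_{i}}$ belongs to $S_{\lambda_{i}}$, hence so does $\varphi^{-m_{i}}$. But the unique $\xi$-eigenvector with eigenvalue $\varphi^{-m_{i}}$ is $e_{n+i}$, which lies in $V_{\lambda_{i}^{-1}}$, so $\lambda_{i}^{-1}=\lambda_{i}$ and therefore $\lambda_{i}^{2}=1$. Since this holds for every $i$, we conclude $h\in\boldsymbol{\mu}_{\boldsymbol{2}}^{n}$, i.e.\ $T_{X}\leq\boldsymbol{\mu}_{\boldsymbol{2}}^{n}$.
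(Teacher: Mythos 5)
Your proof is correct and follows essentially the same strategy as the paper's: use the fact that each $h$-eigenspace is invariant under all $A_i,B_i$, apply the determinant-of-commutators argument to the restriction, and invoke the genericity condition \ref{eq:2.1}. The only difference is organizational — the paper conjugates $Z$ into block form by a permutation matrix and argues by contradiction from the existence of an eigenvalue $\alpha\neq\pm1$, whereas you argue directly on each eigenspace $V_{\lambda_i}$, which makes the use of \ref{eq:2.1} (via the sets $J,L$) somewhat more explicit.
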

\begin{proof}
Let $Z$ be an element of $T_{X}$. Then $ZA_{i}=A_{i}Z$, $ZB_{i}=B_{i}Z$
for any $i=1,\ldots,g$. Suppose that $Z\notin\boldsymbol{\mu}_{\boldsymbol{2}}^{n}$. Then $Z$ has
an eigenvalue $\alpha$ different from $\pm1$. Permute the eigenvalues
of $Z$ in order to collect them in groups such that all the elements
in the same group are equal. By a further permutation, we can assume
that the first group of eigenvalues of $Z$ is made by the $\alpha$'s.
This is equivalent to the action of a permutation matrix $\pi$ by
conjugation on $Z$. Denote by $\pi{\left(\cdot\right)}$ the conjugation
by $\pi$. Then 
\begin{equation}
\label{eq:2.5}
\begin{aligned}
\pi{\left(Z\right)}\pi{\left(A_{i}\right)}&=\pi{\left(A_{i}\right)}\pi{\left(Z\right)}\\
\pi{\left(Z\right)}\pi{\left(B_{i}\right)}&=\pi{\left(B_{i}\right)}\pi{\left(Z\right)}
\end{aligned}
\end{equation}
for all $i=1,\ldots,g$, and
\begin{equation}
\prod_{i=1}^{g}{\left[\pi{\left(A_{i}\right)}:\pi{\left(B_{i}\right)}\right]}=\pi{\left(\xi\right)}.\label{eq:2.7}
\end{equation}
Now, by \ref{eq:2.5}, we have that $\pi{\left(A_{i}\right)}=\mathrm{diag}{\left(A_{i}^{1},\ldots,A_{i}^{k}\right)}$,
$\pi{\left(B_{i}\right)}=\mathrm{diag}{\left(B_{i}^{1},\ldots,B_{i}^{k}\right)}$
for any $i=1,\ldots,g$, where $k$ is the number of different eigenvalues
of $Z$ and the $A_{i}^{h}$'s and $B_{i}^{h}$ are square matrices
whose sizes are equal to the multiplicity of the $h$-th eigenvalue
of $Z$, $h=1,\ldots,k$. From \ref{eq:2.7}, writing $\pi{\left(\xi\right)}=\mathrm{diag}{\left(D^{1},\ldots,D^{k}\right)}$,
follows that 
\[
\mathrm{diag}{\left(\prod_{i=1}^{g}{\left[A_{i}^{1}:B_{i}^{1}\right]},\ldots,\prod_{i=1}^{g}{\left[A_{i}^{k}:B_{i}^{k}\right]}\right)}=\mathrm{diag}{\left(D^{1},\ldots,D^{k}\right)}.
\]
As the determinant of a commutator is 1, the determinant
of $D^{h}$ has to be equal to 1 for any $h=1,\ldots,k$. In particular,
$\mathrm{det}{\left(D^{1}\right)}=1$. But since $\alpha\neq\alpha^{-1}$,
there is a $\varphi^{m_{j}}$, for some $j$, that is an eigenvalue
of $D^{1}$, but not $\varphi^{-m_{j}}$, so $\mathrm{det}\left(D^{1}\right)$
cannot be equal to $1$, because $\varphi^{m_{1}},\ldots,\varphi^{m_{n}}$
satisfy the inequalities \ref{eq:2.1}, and this is a contradiction.
\end{proof}
\begin{defn}
\label{def:2.4}Fix a subgroup $H$ of $\boldsymbol{\mu}_{\boldsymbol{2}}^{n}$ containing
$\boldsymbol{Z}$. Define the following subsets of $\mathcal{U}_{n}^{\xi}$:
\begin{equation}
\widetilde{\mathcal{U}}{}_{n,H}^{\xi}:=\left\{ X\in\mathcal{U}_{n}^{\xi}\mid H=T_{X}\right\} \label{eq:2.8}
\end{equation}
\begin{equation}
\,\,\,\mathcal{U}_{n,H}^{\xi}:=\left\{ X\in\mathcal{U}_{n}^{\xi}\mid H\subseteq T_{X}\right\}.
\label{eq:2.9}
\end{equation}
\end{defn}
\begin{rem}
\label{rem:2.5}
It is evident from the previous Definition
\ref{def:2.4} that $\widetilde{\mathcal{U}}_{n,H}^{\xi}$ is an open
subset of the closed affine variety $\mathcal{U}_{n,H}^{\xi}$. In
particular, $\left\{ \widetilde{\mathcal{U}}_{n,H}^{\xi}\right\} _{\boldsymbol{Z}\leq H\leq\boldsymbol{\mu}_{\boldsymbol{2}}^{n}}$
is a stratification of $\mathcal{U}_{n}^{\xi}$. Moreover, $\mathcal{U}_{n,\boldsymbol{\mu_{2}}}^{\xi}=\mathcal{U}_{n}^{\xi}$,
so $\widetilde{\mathcal{U}}_{n,\boldsymbol{\mu_{2}}}^{\xi}$ is an open subset
of $\mathcal{U}_{n}^{\xi}$.
\end{rem}
\begin{prop}
\label{prop:2.4}$\widetilde{\mathcal{U}}_{n,H}^{\xi}$
and $\mathcal{U}_{n,H}^{\xi}$ are stable under the action $\sigma$
of $T$. 
\end{prop}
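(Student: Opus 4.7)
The plan is to exploit the fact that $T$ is abelian to show that the stabilizer $T_X$ is an invariant of the $T$-orbit of $X$; the stability of both $\widetilde{\mathcal{U}}_{n,H}^{\xi}$ and $\mathcal{U}_{n,H}^{\xi}$ will then be immediate from the definitions in \ref{def:2.4}.

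First I would verify that the conjugation action $\sigma$ actually preserves the ambient variety $\mathcal{U}_n^{\xi}$. This is a direct computation: for $h\in T$ and $X=(A_1,B_1,\ldots,A_g,B_g)\in\mathcal{U}_n^{\xi}$, we have $[h^{-1}A_i h:h^{-1}B_i h]=h^{-1}[A_i:B_i]h$, so $\mu(\sigma(h,X))=h^{-1}\xi h=\xi$ because $h\in T=C_{\mathrm{Sp}(2n,\mathbb{K})}(\xi)$.

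Next, the key step: I claim that $T_{\sigma(h,X)}=T_X$ for every $h\in T$. Indeed, for $k\in T$ a straightforward computation gives $\sigma(k,\sigma(h,X))=\sigma(kh,X)$ and $\sigma(h,\sigma(k,X))=\sigma(hk,X)$, so since $T$ is abelian these two expressions agree. Because $\sigma(h,\cdot)$ is a bijection on $\mathcal{U}_n^{\xi}$ with inverse $\sigma(h^{-1},\cdot)$, the equality $\sigma(k,\sigma(h,X))=\sigma(h,X)$ is equivalent to $\sigma(k,X)=X$; that is, $k\in T_{\sigma(h,X)}$ iff $k\in T_X$.

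Finally, the two stability assertions follow at once. If $X\in\widetilde{\mathcal{U}}_{n,H}^{\xi}$, then $T_X=H$, and by the previous step $T_{\sigma(h,X)}=H$, so $\sigma(h,X)\in\widetilde{\mathcal{U}}_{n,H}^{\xi}$. Similarly, if $X\in\mathcal{U}_{n,H}^{\xi}$, then $H\subseteq T_X=T_{\sigma(h,X)}$, giving $\sigma(h,X)\in\mathcal{U}_{n,H}^{\xi}$. There is no real obstacle here; the only point requiring attention is the use of commutativity of $T$, which is essential in identifying the two iterated conjugations above and hence in concluding that the stabilizer is constant along $T$-orbits.
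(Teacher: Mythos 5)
Your proof is correct and takes essentially the same approach as the paper: both arguments reduce the claim to the observation that conjugation by $\omega\in T$ sends a stabilizer $T_X$ to $\omega^{-1}T_X\omega$, which equals $T_X$ because $T$ is abelian, and then read off both stability statements from Definition \ref{def:2.4}. The only difference is presentational (you state $T_{\sigma(h,X)}=T_X$ directly, while the paper phrases it as $X\in\mathcal{U}_{n,H}^{\xi}\iff\omega^{-1}X\omega\in\mathcal{U}_{n,\omega^{-1}H\omega}^{\xi}$).
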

\begin{proof}
Let $X=\left(A_{1},B_{1},\ldots,A_{g},B_{g}\right)$ be
an element of $\mathcal{U}_{n,H}^{\xi}$, $Z\in H$. Then $A_{i}Z=ZA_{i}$, $B_{i}Z=ZB_{i}$ for any $i=1,\ldots,g$. It follows that, if $\omega\in T$, $X\in\mathcal{U}_{n,H}^{\xi}$
if and only if $\omega^{-1}X\omega\in\mathcal{U}_{n,\omega^{-1}H\omega}^{\xi}$. But since
$T$ is abelian, $\omega^{-1}H\omega=H$ and the assertion follows. The proof for $\widetilde{\mathcal{U}}_{n,H}^{\xi}$ is completely analogous.
\end{proof}
\begin{rem}
\label{rem:2.6}
By its definition, it is easy to see that $\widetilde{\mathcal{U}}_{n,H}^{\xi}$ admits a finite open cover $\left\{ \widetilde{\mathcal{U}}_{i,H}\right\}_{i\in I}$ of affine $T$-stable subsets. For more details, see \cite[Remark 3.1.10]{key-phd}.
\end{rem}
\begin{defn}
\label{def:2.5}A \emph{parabolic $\mathrm{Sp}{\left(2n,\mathbb{K}\right)}$-character
variety} of a closed Riemann surface of genus $g$ is the categorical
quotient 
\begin{equation}
\mathcal{M}_{n}^{\xi}:=\mathcal{U}_{n}^{\xi}//T=\mathrm{Spec}{\left(\mathbb{K}{\left[\mathcal{U}_{n}^{\xi}\right]}^{T}\right)}.\label{eq:2.10}
\end{equation}
More generally, define the categorical quotient
\begin{equation}
\mathcal{M}_{n,H}^{\xi}:=\mathcal{U}_{n,H}^{\xi}//T=\mathrm{Spec}{\left(\mathbb{K}{\left[\mathcal{U}_{n,H}^{\xi}\right]}^{T}\right)}.\label{eq:2.11}
\end{equation}
\end{defn}
\begin{rem}
\label{rem:2.8}Since $H$ acts trivially on $\mathcal{U}_{n,H}^{\xi}$,
we can define $\mathcal{M}_{n,H}^{\xi}$ as the categorical quotient
$\mathcal{U}_{n,H}^{\xi}//\left(T/H\right)$.
\end{rem}
\begin{prop}
\label{prop:2.7}
$\mathcal{M}_{n,H}^{\xi}$ is a geometric
quotients for any $\boldsymbol{Z}\leq H\leq\boldsymbol{\mu}_{\boldsymbol{2}}^{n}$. 
\end{prop}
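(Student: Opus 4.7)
The plan is to apply the standard GIT criterion: for a reductive group $G$ acting on an affine variety $Y$, the categorical quotient $Y//G$ is a geometric quotient if and only if every $G$-orbit on $Y$ is closed. First I would invoke Remark \ref{rem:2.8} to replace the $T$-action on $\mathcal{U}_{n,H}^{\xi}$ by the induced action of $T/H$; since $H$ is a finite subgroup of $T$, the quotient $T/H$ is again a torus, in particular reductive, and $\mathcal{U}_{n,H}^{\xi}$ is affine as a closed subvariety of $\mathcal{U}_{n}^{\xi}$ (cut out by the commuting conditions with the elements of $H$).

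Next I would verify that every $T/H$-orbit on $\mathcal{U}_{n,H}^{\xi}$ has the same dimension $n = \dim(T/H)$. For any $X \in \mathcal{U}_{n,H}^{\xi}$, Proposition \ref{prop:2.3} gives $T_{X} \leq \boldsymbol{\mu}_{\boldsymbol{2}}^{n}$, so $T_{X}$ is finite; hence the stabilizer of $X$ in $T/H$ is the finite group $T_{X}/H$, and the orbit $(T/H)\cdot X$ has dimension exactly $\dim(T/H) = n$.

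The closedness of orbits then follows by a standard dimension argument: orbits of an algebraic group action are always locally closed, so the boundary $\overline{(T/H)\cdot X} \setminus (T/H)\cdot X$ is a $T/H$-stable closed subset which is a union of orbits of strictly smaller dimension. Since every orbit has dimension $n$, this boundary must be empty, i.e. every orbit is closed. Applying the GIT criterion to the reductive $T/H$ acting on affine $\mathcal{U}_{n,H}^{\xi}$ concludes the proof.

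There is no real obstacle here — the substantive input is Proposition \ref{prop:2.3}, which is what ensures the stabilizers are finite; once that is in hand, the argument reduces to elementary GIT bookkeeping and the uniform-dimension implies closed-orbit lemma.
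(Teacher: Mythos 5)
Your proof is correct and takes essentially the same route as the paper: reduce to showing all orbits are closed, use Proposition \ref{prop:2.3} to get finite stabilizers and hence equidimensional orbits, and conclude closedness from the fact that a non-closed orbit would have a boundary made of strictly smaller-dimensional orbits. The only cosmetic difference is that you pass to the $T/H$-action via Remark \ref{rem:2.8}, whereas the paper argues directly with $T$; both are fine since the relevant stabilizer dimensions are unchanged.
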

\begin{proof}
Since $\mathcal{M}_{n,H}^{\xi}$ is a categorical quotient
of an affine variety by the action of an affine reductive algebraic
group, it is a good quotient (for a definition of a good quotient see \cite[Definition 2.36]{key-Ho}), so by \cite[Corollary 2.39 ii)]{key-Ho}, it is sufficient to prove
that all the orbits are closed. By \ref{prop:2.3}, for every $X\in\mathcal{U}_{n,H}^{\xi}$,
$\mathrm{dim}{\left(T_{X}\right)}=0$. It follows, denoting the orbit of $X$ by $TX$, that $\mathrm{dim}{\left(TX\right)}=\mathrm{dim}{\left(T\right)}$
from the Orbit-Stabiliser Theorem. 

Now, suppose that there exists
a non closed orbit. Then, by \cite[Proposition in 8.3]{key-22}, its boundary is not empty and it is
a union of orbits of strictly smaller dimension. But this contradicts
the fact that all the orbits have the same dimension. 
\end{proof}
By Proposition \ref{prop:2.4}, together with the properties
of geometric quotients, we can give the following
\begin{defn}
\label{defn:stratum}
For every $\boldsymbol{Z}\leq H\leq\boldsymbol{\mu}_{\boldsymbol{2}}^{n}$, define the geometric quotient
\[
\widetilde{\mathcal{M}}_{n,H}^{\xi}:=\widetilde{\mathcal{U}}_{n,H}^{\xi}/T.
\]
\end{defn}
\begin{rem}
Since $\mathcal{M}_{n}$ is a geometric quotient because
of Proposition \ref{prop:2.7}, it has the quotient topology, hence,
by Proposition \ref{prop:2.4} and \ref{rem:2.5}, $\mathcal{M}_{n,H}^{\xi}$
is a closed affine variety and $\widetilde{\mathcal{M}}_{n,H}^{\xi}$
is an open subset of it. In particular, $\left\{ \widetilde{\mathcal{M}}_{n,H}^{\xi}\right\} _{\boldsymbol{Z}\leq H\leq\boldsymbol{\mu}_{\boldsymbol{2}}^{n}}$
is a stratification of $\mathcal{M}_{n}^{g}$ and $\widetilde{\mathcal{M}}_{n,\boldsymbol{\mu_{2}}}^{\xi}$
is an open subset of the character variety $\mathcal{M}_{n}^{\xi}$.
\end{rem}

\begin{rem}
As in \ref{rem:2.8}, we can realize $\widetilde{\mathcal{M}}_{n,H}^{\xi}$
as the geometric quotient of $\widetilde{\mathcal{U}}_{n,H}^{\xi}$
by the \emph{free }action of the affine algebraic group $T/H$.
\end{rem}
\begin{rem}
\label{rem:2.9}
Thanks to \ref{rem:2.6}, we get a finite open affine cover of $\widetilde{\mathcal{M}}_{n,H}^{\xi}$
given by $\left\{ \widetilde{\mathcal{M}}_{i,H}\right\}_{i\in I}$, where $\widetilde{\mathcal{M}}_{i,H}=\widetilde{\mathcal{U}}_{i,H}/T$.
\end{rem} 
\begin{prop}
\label{prop:2.13}
The variety $\mathcal{U}_{n}^{\xi}$ is
non singular and equidimensional. The dimension of each connected
component of $\mathcal{U}_{n}^{\xi}$ is given by 
\begin{equation}
\label{eq:dim}
\mathrm{dim}{\left(\mathcal{U}_{n}^{\xi}\right)}={\left(2g-1\right)}n{\left(2n+1\right)}.
\end{equation}
\end{prop}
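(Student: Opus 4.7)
The plan is to prove that the map $\mu:\mathrm{Sp}(2n,\mathbb{K})^{2g}\to\mathrm{Sp}(2n,\mathbb{K})$ defined in \ref{eq:2.3} is a submersion at every point of the fibre $\mathcal{U}_{n}^{\xi}=\mu^{-1}(\xi)$. Since both source and target are smooth and $\dim\mathrm{Sp}(2n,\mathbb{K})=n(2n+1)$, this simultaneously yields non-singularity and equidimensionality of $\mathcal{U}_{n}^{\xi}$, with each connected component of dimension $2g\cdot n(2n+1)-n(2n+1)=(2g-1)n(2n+1)$, as required by \ref{eq:dim}.

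To establish submersivity, fix a point $X=(A_{1},B_{1},\ldots,A_{g},B_{g})\in\mathcal{U}_{n}^{\xi}$, write $\mathfrak{g}:=\mathfrak{sp}(2n,\mathbb{K})$, and trivialise tangent spaces by left translation so that $d\mu_{X}$ becomes a linear map $\mathfrak{g}^{2g}\to\mathfrak{g}$. Differentiating the commutator map $(A,B)\mapsto[A,B]$ and applying the chain rule to the $g$-fold product expresses $d\mu_{X}$ explicitly as a sum of $\mathrm{Ad}$-operators in the components of $X$. A standard Goldman-type calculation for punctured surfaces with prescribed boundary holonomy then identifies the cokernel of $d\mu_{X}$, via the nondegenerate pairing on $\mathfrak{g}$ given by the Killing form, with the dual of the fixed subspace
\[
\mathfrak{g}^{\rho_{X}}:=\bigl\{Y\in\mathfrak{g}\ \bigm|\ \mathrm{Ad}(A_{i})Y=Y=\mathrm{Ad}(B_{i})Y\ \text{for all}\ i=1,\ldots,g\bigr\},
\]
i.e.\ the Lie algebra of the simultaneous centraliser in $\mathrm{Sp}(2n,\mathbb{K})$ of the tuple $X$.

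The remaining task is to verify $\mathfrak{g}^{\rho_{X}}=0$. Any $Y$ in this space commutes with each $A_{i}$ and $B_{i}$, hence with their iterated commutator-product $\xi$; by the genericity condition \ref{eq:2.1}, the centraliser of $\xi$ in $\mathrm{Sp}(2n,\mathbb{K})$ equals the maximal torus $T$, so $Y\in\mathfrak{t}$ and $\exp(tY)\in T$ for every $t\in\mathbb{K}$. Because $\exp(tY)$ also commutes with all $A_{i}$ and $B_{i}$, it fixes $X$ under the conjugation action $\sigma$, so $\exp(tY)\in T_{X}$ for all $t$. Proposition \ref{prop:2.3} forces $T_{X}\subseteq\boldsymbol{\mu}_{\boldsymbol{2}}^{n}$, which is finite, so the one-parameter subgroup $t\mapsto\exp(tY)$ must be constant, forcing $Y=0$. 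Hence $d\mu_{X}$ is surjective along the whole fibre.

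I expect the main obstacle to be the explicit differentiation of the commutator-product map and the ensuing identification of $\mathrm{coker}(d\mu_{X})$ with $(\mathfrak{g}^{\rho_{X}})^{*}$, which is ultimately a Poincar\'e--Lefschetz duality statement for the group cohomology of $\pi_{1}(\Sigma_{g}\setminus\{p_{0}\})$ with coefficients in $\mathfrak{g}$ twisted by $\mathrm{Ad}\circ\rho_{X}$. Once this bookkeeping is in place, the genericity of $\xi$ combined with Proposition \ref{prop:2.3} dispatches the question in a single line, and equidimensionality is automatic from the constancy of the fibre dimension of a submersion between smooth varieties.
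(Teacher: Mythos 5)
Your overall strategy coincides with the paper's: show that $\mu$ is a submersion along the fibre $\mu^{-1}(\xi)$, which gives smoothness and equidimensionality with the expected dimension. The paper, following Hausel--Rodriguez-Villegas, carries out the differentiation of the commutator-product map explicitly, writing $d\mu_{s}$ as a sum of maps $f_{i},g_{i}$ with values in $T_{\xi}\mathrm{Sp}(2n,\mathbb{K})$, and then pairs a hypothetical annihilator $Z'$ against the image via the non-degenerate form $(A,B)\mapsto\mathrm{Tr}(JAJ^{-1}B)$, showing inductively that $C:=JZJ^{-1}$ (where $Z'=\xi Z$) commutes with all $A_{i},B_{i}$, hence with $\xi$. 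This is precisely the duality you invoke as a ``Goldman-type calculation,'' but you defer the entire identification of $\mathrm{coker}(d\mu_{X})$ with $(\mathfrak{g}^{\rho_{X}})^{*}$ to a reference, whereas this bookkeeping \emph{is} the substance of the proof; your proposal does not actually carry it out.

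There is a more concrete gap at the end. To conclude $\mathfrak{g}^{\rho_{X}}=0$ you argue via the one-parameter subgroup $t\mapsto\exp(tY)$ landing in the finite group $T_{X}\subseteq\boldsymbol{\mu}_{\boldsymbol{2}}^{n}$. This only makes sense when $\mathrm{char}(\mathbb{K})=0$: the statement (and the paper's proof) are for any algebraically closed $\mathbb{K}$ with $\mathrm{char}(\mathbb{K})\neq2$, and the proposition is in fact applied over $\overline{\mathbb{F}_{q}}$ in Section~\ref{section:4}. In positive characteristic $\exp$ is unavailable, and even replacing it with the observation that the scheme-theoretic centraliser is finite does not immediately give a trivial Lie algebra without knowing it is reduced. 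The paper avoids this entirely: once $C$ commutes with all $A_{i},B_{i}$ and $\xi$, it must be of the form $\mathrm{diag}(\lambda_{1},\ldots,\lambda_{n},-\lambda_{1},\ldots,-\lambda_{n})$, and an argument ``as in Proposition~\ref{prop:2.3}'' — a block-decomposition and determinant/trace contradiction using the genericity conditions \ref{eq:2.1}, purely at the matrix level — forces $C=0$, uniformly in the characteristic. You should replace the exponential step by this Lie-algebra-level repetition of the Proposition~\ref{prop:2.3} argument, and you should actually perform (or at least sketch the key steps of) the cokernel identification rather than assert it.
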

\begin{proof}
We follow the strategy of \cite[Theorem 2.2.5]{key-5}, with slight
variations. Assume that $g>0$. It is enough to show that at a
solution $s=\left(A_{1},B_{1},\ldots,A_{g},B_{g}\right)\in\mathrm{Sp}{\left(2n,\mathbb{K}\right)}^{2g}$
of the equation
\begin{equation}
\left[A_{1}:B_{1}\right]\cdots\left[A_{g}:B_{g}\right]=\xi\label{eq:2.12}
\end{equation}
the derivative of $\mu$ on the tangent spaces 
\[
d\mu_{s}:T_{s}{\left(\mathrm{Sp}{\left(2n,\mathbb{K}\right)}\right)}^{2g}\rightarrow T_{\xi}{\left(\mathrm{Sp}{\left(2n,\mathbb{K}\right)}\right)}
\]
is surjective. So take $\left(X_{1},Y_{1},\ldots,X_{g},Y_{g}\right)\in T_{s}{\left(\mathrm{Sp}{\left(2n,\mathbb{K}\right)}\right)}^{2g}$.
Then differentiate $\mu$ to get: 

$d\mu_{s}{\left(X_{1},Y_{1},\ldots,X_{g},Y_{g}\right)}=$
\begin{equation}
\begin {aligned}
&\sum\limits_{\substack{i=1}}^{g}{{\left[A_{1}:B_{1}\right]}{\cdots}{\left[A_{i-1}:B_{i-1}\right]}{X_{i}B_{i}A_{i}^{-1}B_{i}^{-1}}{\left[A_{i+1}:B_{i+1}\right]}{\cdots}{\left[A_{g}:B_{g}\right]}}\\
+&\sum\limits_{\substack{i=1}}^{g}{{\left[A_{1}:B_{1}\right]}{\cdots}{\left[A_{i-1}:B_{i-1}\right]}{A_{i}Y_{i}A_{i}^{-1}B_{i}^{-1}}{\left[A_{i+1}:B_{i+1}\right]}{\cdots}{\left[A_{g}:B_{g}\right]}}\\
-&\sum\limits_{\substack{i=1}}^{g}{{\left[A_{1}:B_{1}\right]}{\cdots}{\left[A_{i-1}:B_{i-1}\right]}{A_{i}B_{i}A_{i}^{-1}X_{i}A_{i}^{-1}B_{i}^{-1}}{\left[A_{i+1}:B_{i+1}\right]}{\cdots}{\left[A_{g}:B_{g}\right]}}\\
-&\sum\limits_{\substack{i=1}}^{g}{{\left[A_{1}:B_{1}\right]}{\cdots}{\left[A_{i-1}:B_{i-1}\right]}{A_{i}B_{i}A_{i}^{-1}B_{i}^{-1}Y_{i}B_{i}^{-1}}{\left[A_{i+1}:B_{i+1}\right]}{\cdots}{\left[A_{g}:B_{g}\right]}}
\end{aligned}
\end{equation}
and using \ref{eq:2.12}, for each of the four terms, we get:
\begin{equation}
\label{eq:2.13}
d\mu_{s}{\left(X_{1},Y_{1},\ldots,X_{g},Y_{g}\right)}=\sum\limits_{\substack{i=1}}^{g}{f_{i}{\left(X_{i}\right)}+g_{i}{\left(Y_{i}\right)}},
\end{equation}
where we define linear maps 
\[
\begin{aligned}
f_{i}:T_{A_{i}}{\left(\mathrm{Sp}{\left(2n,\mathbb{K}\right)}\right)}&\rightarrow\mathfrak{gl}{\left(2n,\mathbb{K}\right)}\\
g_{i}:T_{B_{i}}{\left(\mathrm{Sp}{\left(2n,\mathbb{K}\right)}\right)}&\rightarrow\mathfrak{gl}{\left(2n,\mathbb{K}\right)}
\end{aligned}
\]
by $f_{i}{\left(X\right)}:=$
\[
\prod\limits_{\substack{j=1}}^{i-1}{\left[A_{j}:B_{j}\right]}{\left(XA_{i}^{-1}-A_{i}B_{i}A_{i}^{-1}XB_{i}^{-1}A_{i}^{-1}\right)}{\prod\limits_{\substack{j=1}}^{i-1}{\left[B_{i-j}:A_{i-j}\right]}\xi}
\]
and 
$g_{i}{\left(Y\right)}:=$
\[
{\prod\limits_{\substack{j=1}}^{i-1}{\left[A_{j}:B_{j}\right]}}{\left(A_{i}YB_{i}^{-1}A_{i}^{-1}-A_{i}B_{i}A_{i}^{-1}B_{i}^{-1}YA_{i}B_{i}^{-1}A_{i}^{-1}\right)}{\prod\limits_{\substack{j=1}}^{i-1}{\left[B_{i-j}:A_{i-j}\right]}\xi}.
\]
We claim that $f_{i}$ and $g_{i}$ take values in $T_{\xi}{\left(\mathrm{Sp}{\left(2n,\mathbb{K}\right)}\right)}$.
We will prove it only for $f_{i}$, the proof for $g_{i}$ being completely
analogous. 

What we have to prove is that $B=f_{i}{\left(X\right)}\xi^{-1}$
is a hamiltonian matrix for every $X\in T_{A_{i}}{\left(\mathrm{Sp}{\left(2n,\mathbb{K}\right)}\right)}$,
i.e., $B^{t}J=-JB$. 
Call 
\[
\begin{aligned}
U&=\prod\limits_{\substack{j=1}}^{i-1}{\left[A_{j}:B_{j}\right]},\\
V&=XA_{i}^{-1}-A_{i}B_{i}A_{i}^{-1}XB_{i}^{-1}A_{i}^{-1}.
\end{aligned}
\]
Notice that, since $A_{j}$ and $B_{j}$ are symplectic for every
$j=1,\ldots,i$ and $X\in T_{A_{i}}{\left(\mathrm{Sp}{\left(2n,\mathbb{K}\right)}\right)}$,
$U$ and $U^{-1}$ are symplectic. Moreover, the facts that $A_{i}^{-1}X$ is hamiltonian and $A_{i}B_{i}$ is symplectic imply that $A_{i}B_{i}A_{i}^{-1}XB_{i}^{-1}A_{i}^{-1}$ is hamiltonian and since $XA_{i}^{-1}$ is hamiltonian, $V$ and $V^{t}$ are hamiltonian too. Then 
\[
\begin{aligned}
B^{t}J&=\left(U^{-1}\right)^{t}V^{t}U^{t}J=\left(U^{-1}\right)^{t}V^{t}JU^{-1}\\
&=-\left(U^{-1}\right)^{t}JVU^{-1}=-JUVU^{-1}=-JB
\end{aligned}
\]
that is our claim. 

Assume that $Z^{\prime}\in T_{\xi}{\left(\mathrm{Sp}{\left(2n,\mathbb{K}\right)}\right)}$
such that
\begin{equation}
\label{eq:2.14}
\mathrm{Tr}{\left(JZ^{\prime}J^{-1}d\mu_{s}{\left(X_{1},Y_{1},\ldots,X_{g},Y_{g}\right)}\right)}=0.
\end{equation}
By \ref{eq:2.13}, this is equivalent to 
\[
\mathrm{Tr}{\left(JZ^{\prime}J^{-1}f_{i}{\left(X_{i}\right)}\right)}=\mathrm{Tr}{\left(JZ^{\prime}J^{-1}g_{i}{\left(Y_{i}\right)}\right)}=0
\]
for all $i$ and $X_{i}\in T_{A_{i}}{\left(\mathrm{Sp}{\left(2n,\mathbb{K}\right)}\right)}$,
$Y_{i}\in T_{B_{i}}{\left(\mathrm{Sp}{\left(2n,\mathbb{K}\right)}\right)}$.
We show by induction on $i$ that this implies that, if $Z^{\prime}=\xi Z$,
with $Z$ hamiltonian, $C:=JZJ^{-1}$ commutes with $A_{i}$ and $B_{i}$.
Notice that $C$ is hamiltonian. Assume we have already proved this
for $j<i$ and calculate
\[
\begin{aligned}
0&=\mathrm{Tr}{\left(JZ^{\prime}J^{-1}f_{i}{\left(X_{i}\right)}\right)}\\
&=\mathrm{Tr}{\left(C\left(X_{i}A_{i}^{-1}-A_{i}B_{i}A_{i}^{-1}X_{i}B_{i}^{-1}A_{i}^{-1}\right)\right)}\\
&=\mathrm{Tr}{\left(\left(A_{i}^{-1}CA_{i}-B_{i}A_{i}^{-1}CA_{i}B_{i}\right)A_{i}^{-1}X_{i}\right)}
\end{aligned}
\]
for all $X_{i}\in T_{A_{i}}{\left(\mathrm{Sp}{\left(2n,\mathbb{K}\right)}\right)}$.
Since $A_{i}^{-1}CA_{i}-B_{i}A_{i}^{-1}CA_{i}B_{i}$ and $A_{i}^{-1}X_{i}$
are hamiltonian, and $\mathrm{Tr}{\left(\cdot,\cdot\right)}$ is a non
degenerate symmetric bilinear form over $\mathfrak{sp}{\left(2n,\mathbb{K}\right)}$,
when $\mathrm{char}{\left(\mathbb{K}\right)}\neq2$, $C$ commutes with
$A_{i}B_{i}A_{i}^{-1}$. Similarly we have
\[
\begin{aligned}
0&=\mathrm{Tr}{\left(JZ^{\prime}J^{-1}g_{i}{\left(Y_{i}\right)}\right)}\\
&=\mathrm{Tr}{\left(\left(B_{i}^{-1}A_{i}^{-1}CA_{i}B_{i}-A_{i}^{-1}B_{i}^{-1}A_{i}^{-1}CA_{i}B_{i}A_{i}^{-1}\right)B_{i}^{-1}Y_{i}\right)}
\end{aligned}
\]
which implies that $C$ commutes with $A_{i}B_{i}A_{i}B_{i}^{-1}A_{i}^{-1}$.
Thus $C$ commutes with $A_{i}$ and $B_{i}$, hence with $\xi=\prod\limits_{\substack{i=1}}^{g}{\left[A_{i}:B_{i}\right]}$.
It follows that 
\[
C=\mathrm{diag}{\left(\lambda_{1},\ldots,\lambda_{n},-\lambda_{1},\ldots,-\lambda_{n}\right)}.
\]
Arguing as in Proposition \ref{prop:2.3}, we can prove by contradiction
that $C=0$. Thus there is no non-zero $Z^{\prime}$ such that \ref{eq:2.14}
holds for all $X_{i}$ and $Y_{i}$. Since $\varphi{\left(A,B\right)}:=\mathrm{Tr}{\left(JAJ^{-1}B\right)}$
is symmetric non degenerate bilinear form over $T_{\xi}{\left(\mathrm{Sp}{\left(2n,\mathbb{K}\right)}\right)}$
when $\mathrm{char}{\left(\mathbb{K}\right)}\neq2$, this implies that
$d\mu$ is surjective at any solution $s$ of \ref{eq:2.12}. Thus
$\mathcal{U}_{n}^{\xi}$ is non singular and equidimensional. Finally,
we see that the dimension of (each connected component of) $\mathcal{U}_{n}^{\xi}$
is
\[
\mathrm{dim}{\left(\mathrm{Sp}{\left(2n,\mathbb{K}\right)}^{2g}\right)}-\mathrm{dim}{\left(\mathrm{Sp}{\left(2n,\mathbb{K}\right)}\right)}={\left(2g-1\right)}n{\left(2n+1\right)}
\]
proving the second claim.
\end{proof}

\begin{cor}
\label{cor:2.14}
The dimension of (each connected component of) $\mathcal{M}_{n}^{\xi}$ is equal
to $d_{n}:={\left(2g-1\right)}n{\left(2n+1\right)}-n$.
\end{cor}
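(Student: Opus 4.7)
The plan is to derive the dimension of $\mathcal{M}_n^\xi$ as a direct corollary of Proposition~\ref{prop:2.13} combined with the finiteness of stabilizers of the $T$-action established in Proposition~\ref{prop:2.3}, with no substantial new computation required.

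First I would recall that Proposition~\ref{prop:2.13} has already shown that $\mathcal{U}_n^\xi$ is smooth and equidimensional, every connected component having dimension $(2g-1)n(2n+1)$. Next, by Proposition~\ref{prop:2.3} every stabilizer $T_X$ for $X\in\mathcal{U}_n^\xi$ is contained in the finite group $\boldsymbol{\mu}_{\boldsymbol{2}}^n$, so $\dim T_X=0$. By the Orbit--Stabilizer theorem this forces every $T$-orbit in $\mathcal{U}_n^\xi$ to have the same dimension $\dim(T)=n$.

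Finally I would appeal to the fact that $\mathcal{M}_n^\xi$ is a geometric quotient, which either follows from the statement in the introduction or can be verified stratum by stratum by applying Proposition~\ref{prop:2.7} to the stratification $\{\widetilde{\mathcal{M}}_{n,H}^\xi\}_{\boldsymbol{Z}\leq H\leq\boldsymbol{\mu}_{\boldsymbol{2}}^n}$. For a geometric quotient by a reductive group whose orbits all have the same dimension, the dimension of the quotient is obtained by subtracting the common orbit dimension from the dimension of the total space, so each connected component of $\mathcal{M}_n^\xi$ has dimension
\[
(2g-1)n(2n+1)-n=d_n.
\]
There is no substantive obstacle: the only minor point to verify is that equidimensionality transfers from $\mathcal{U}_n^\xi$ to $\mathcal{M}_n^\xi$, which is immediate since the quotient map has fibres of constant dimension $n$.
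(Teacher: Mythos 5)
Your proposal is correct and follows essentially the same route as the paper: invoke Proposition~\ref{prop:2.3} for finiteness of stabilizers, Proposition~\ref{prop:2.7} for the geometric quotient property, and then subtract $\dim T = n$ from the dimension computed in Proposition~\ref{prop:2.13}. The extra remarks about orbit-stabilizer and equidimensionality passing to the quotient are fine elaborations of what the paper leaves implicit.
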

\begin{proof}
By Proposition \ref{prop:2.3} and \ref{prop:2.7}, we have that 
\[
\mathrm{dim}{\left(\mathcal{M}_{n}^{\xi}\right)}=\mathrm{dim}{\left(\mathcal{U}_{n}^{\xi}\right)}-\mathrm{dim}{\left(T\right)}
\]
so the claim easily follows from Proposition \ref{prop:2.13}.
\end{proof}

\subsection{Geometry of $\mathcal{U}_{n,H}^{\xi}$ }

The goal of this section is to describe the geometry of the variety $\mathcal{U}_{n,H}^{\xi}$ defined in \ref{eq:2.9} for any $\boldsymbol{Z}\leq H\leq\boldsymbol{\mu}_{\boldsymbol{2}}^{n}$.
\begin{not*}
\emph{If $Z=\mathrm{diag}{\left(\varepsilon_{1},\ldots,\varepsilon_{n},\varepsilon_{1},\ldots,\varepsilon_{n}\right)}\in\boldsymbol{\mu}_{\boldsymbol{2}}^{n}$, we denote $Z$ by $\mathrm{diag^{2}}{\left(\varepsilon_{1},\ldots,\varepsilon_{n}\right)}$. If $\Phi\in S_{n}$, we call $\Phi$
the corresponding symplectic permutation matrix too. If $A\in\mathrm{Sp}{\left(2n,\mathbb{K}\right)}$,
we write $\Phi{\left(A\right)}$ instead of $\Phi A \Phi^{-1}$ as in Proposition \ref{prop:2.3}.}
\end{not*}
Let $H$ be a subgroup of $\boldsymbol{\mu}_{\boldsymbol{2}}^{n}/\boldsymbol{Z}$ of rank $k$, $\overrightarrow{\mathcal{B}}$ a basis of $H$. Then $\overrightarrow{\mathcal{B}}=\left\{Z_{1},\ldots,Z_{k}\right\} $, where $Z_{1},\ldots,Z_{k}$ are independent matrices, defined up to a sign, such that $-I_{2n}\notin\mathrm{span}\left\{\overline{Z_{1}},\ldots,\overline{Z_{k}}\right\}\leq\boldsymbol{\mu}_{\boldsymbol{2}}^{n}$, whatever the choice of representatives $\overline{Z_{1}},\ldots,\overline{Z_{k}}$ of  $Z_{1},\ldots,Z_{k}$ is.
It can be easily shown that there exists a permutation $\Phi\in S_{n}$
such that, for all $h\in[k]$,
\begin{equation}
\label{eq:2.15}
\Phi{\left(Z_{h}\right)}=\mathrm{diag^{2}}{\left(\overset{a_{1}^{h}}{\overbrace{1,\ldots,1}},\overset{a_{2}^{h}}{\overbrace{-1,\ldots,-1}},\ldots,\overset{a_{2^{h}-1}^{h}}{\overbrace{1,\ldots,1}},\overset{a_{2^{h}}^{h}}{\overbrace{-1,\ldots,-1}}\right)}
\end{equation}
where $a_{i}^{h-1}=a_{2i-1}^{h}+a_{2i}^{h}$ for any $i\in[2^{h-1}]$.

This permutation gives rise to the following family of unordered partitions of $n$:
\[
\left\{ \left(a_{1}^{h},\ldots,a_{2^{h}}^{h}\right)\right\} _{h\in[k]}.
\]
We will prove that these partitions are uniquely determined by the
subgroup $H$. In order to do this, we have to check the following
facts about the set $\left\{ \left(a_{1}^{h},\ldots,a_{2^{h}}^{h}\right)\right\} _{h\in[k]}$:
\begin{enumerate}
\item It does not depend on the choice of the permutation $\Phi$.\label{enu:2.1}
\item It does not depend on the choice of the representatives of the elements
of the basis $\overrightarrow{\mathcal{B}}$.\label{enu:2.2}
\item It does not depend on the choice of the basis $\overrightarrow{\mathcal{B}}$,
once the representatives of its elements are fixed.\label{enu:2.3}
\end{enumerate}
\begin{lem}
\label{lem:2.15}
Let $H\leq\boldsymbol{\mu}_{\boldsymbol{2}}^{n}$, $\overrightarrow{\mathcal{B}}=\left\{ Z_{1},\ldots,Z_{k}\right\} $
a basis of $H$ such that the elements of $\overrightarrow{\mathcal{B}}$
are of the form \ref{eq:2.15}. If $\lambda\in S_{n}$ such that $\lambda{\left(Z_{h}\right)}=Z_{h}$ for all $h\in[k]$,
then $\lambda\in\prod\limits_{\substack{i=1}}^{2^{k}}{S_{a_{i}^{k}}}$. 
\end{lem}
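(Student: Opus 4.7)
The plan is to argue by induction on $h \in \{1, \ldots, k\}$ that any $\lambda \in S_n$ satisfying $\lambda(Z_{h'}) = Z_{h'}$ for every $h' \leq h$ already preserves each of the $2^h$ consecutive coordinate blocks $B_j^h \subseteq [n]$ of sizes $a_j^h$ appearing in the normal form \eqref{eq:2.15}; specializing at $h = k$ then yields the desired inclusion $\lambda \in \prod_{i=1}^{2^k} S_{a_i^k}$.

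The key elementary observation, used throughout, is that since $Z_h$ is a diagonal $\pm 1$-matrix in $\mathrm{diag}^2$ form and $\lambda$ acts on it by conjugation through its associated symplectic permutation matrix, the condition $\lambda(Z_h) = Z_h$ is equivalent to the statement that $\lambda$ preserves the two subsets
\[
P_h^{+} := \{i \in [n] : (Z_h)_{ii} = 1\}, \qquad P_h^{-} := \{i \in [n] : (Z_h)_{ii} = -1\}.
\]
Reading off \eqref{eq:2.15}, $P_h^{+}$ is the union of the odd-indexed blocks $B_1^h, B_3^h, \ldots, B_{2^h-1}^h$ while $P_h^{-}$ is the union of the even-indexed ones.

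The base case $h = 1$ is immediate since $P_1^{+} = B_1^1$ and $P_1^{-} = B_2^1$, so $\lambda \in S_{a_1^1} \times S_{a_2^1}$. For the inductive step, assume $\lambda$ preserves each $B_i^{h-1}$ and also fixes $Z_h$. The refinement relation $a_i^{h-1} = a_{2i-1}^h + a_{2i}^h$ translates into the disjoint decomposition $B_i^{h-1} = B_{2i-1}^h \sqcup B_{2i}^h$; combined with the parity description above, $B_{2i-1}^h$ and $B_{2i}^h$ are obtained from $B_i^{h-1}$ by intersecting with $P_h^{+}$ and $P_h^{-}$ (in an order determined by the parity of $i$). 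Since $\lambda$ preserves all three of $B_i^{h-1}$, $P_h^{+}$ and $P_h^{-}$, it preserves each piece of this refinement, closing the induction.

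The argument is purely combinatorial and presents no genuine obstacle: it exploits only the hierarchical refinement-by-halving structure that \eqref{eq:2.15} encodes. Degenerate blocks with $a_j^h = 0$ need no special treatment, since the corresponding $B_j^h$ is empty and the factor $S_{a_j^h}$ is trivial, so the induction goes through verbatim.
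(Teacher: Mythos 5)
Your proof is correct and takes essentially the same approach as the paper: both proceed by induction, using at each step that $\lambda(Z_h)=Z_h$ forces $\lambda$ to preserve the positive and negative eigenspaces of $Z_h$, which refines the block structure at the previous level. Your parenthetical that the intersections with $P_h^+$ and $P_h^-$ occur ``in an order determined by the parity of $i$'' is a small inaccuracy, since by \ref{eq:2.15} the signs alternate strictly so the order is fixed for every $i$, but this does not affect the validity of the argument.
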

\begin{proof}
By induction on $k=\mathrm{rk}{\left(H\right)}$.

$k=1$: In this case, $\lambda{\left(Z_{1}\right)}=Z_{1}$,
and since $Z_{1}$ is of the form \ref{eq:2.15}, $\lambda\in S_{a_{1}^{1}}\times S_{a_{2}^{1}}$.

$k\mapsto k+1$: Let $\lambda\in S_{n}$ such that $\lambda{\left(Z_{h}\right)}=Z_{h}$ for any $h=1,\ldots,k+1$.
In particular $\lambda{\left(Z_{h}\right)}=Z_{h}$ for any $h\in[k]$.
By the inductive hypothesis, 
\[
\lambda=\left(\lambda_{1},\ldots,\lambda_{2^{k}}\right)\in\prod\limits_{\substack{i=1}}^{2^k}{S_{a_{i}^{k}}}.
\]
Now, $Z_{k+1}=\mathrm{diag^{2}}{\left(W_{1},\ldots,W_{2^{k}}\right)}$,
where 
\[
W_{i}=\mathrm{diag}{\left(\overset{a_{2i-1}^{k+1}}{\overbrace{1,\ldots,1}},\overset{a_{2i}^{k+1}}{\overbrace{-1,\ldots,-1}}\right)}
\]
for $i\in[2^{k}]$ and $\lambda{\left(Z_{k+1}\right)}=Z_{k+1}$. This implies that $\lambda_{i}{\left(W_{i}\right)}=W_{i}$ hence, by the case $k=1$, $\lambda_{i}\in S_{a_{2i-1}^{k+1}}\times S_{a_{2i}^{k+1}}$,
and this proves the lemma.
\end{proof}
\begin{not*}
\emph{If $\lambda\in S_{n}$ and $\overrightarrow{\mathcal{B}}=\left\{ Z_{1},\ldots,Z_{k}\right\} $
an ordered basis of a subspace $H$ of $\boldsymbol{\mu}_{\boldsymbol{2}}^{n}$, then $\lambda{\left(\overrightarrow{\mathcal{B}}\right)}:=\left\{ \lambda{\left(Z_{1}\right)},\ldots,\lambda{\left(Z_{k}\right)}\right\}$.}
\end{not*}
\begin{lem}
\label{lem:2.16}
Let $H\leq\boldsymbol{\mu}_{\boldsymbol{2}}^{n}$, $\overrightarrow{\mathcal{B}}=\left\{ Z_{1},\ldots,Z_{k}\right\}$
an ordered basis of $H$ and $\lambda,\,\mu\in S_{n}$ such that $\lambda{\left(\overrightarrow{\mathcal{B}}\right)}$
and $\mu{\left(\overrightarrow{\mathcal{B}}\right)}$ are of the form
\ref{eq:2.15}. Then $\lambda{\left(\overrightarrow{\mathcal{B}}\right)}=\mu{\left(\overrightarrow{\mathcal{B}}\right)}$.
\end{lem}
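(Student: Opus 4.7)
The plan is to argue by induction on $k=\mathrm{rk}(H)$, reducing the statement to Lemma \ref{lem:2.15} by introducing $\nu:=\mu\lambda^{-1}\in S_{n}$. The equality $\lambda(\overrightarrow{\mathcal{B}})=\mu(\overrightarrow{\mathcal{B}})$ of ordered bases is equivalent to $\nu(\lambda(Z_{h}))=\lambda(Z_{h})$ for every $h\in[k]$, so the target becomes: show that $\nu$ fixes every element of $\lambda(\overrightarrow{\mathcal{B}})$.

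For the base case $k=1$, both $\lambda(Z_{1})$ and $\mu(Z_{1})$ are diagonal matrices of shape \ref{eq:2.15} with parameters $(a_{1}^{1},a_{2}^{1})$. The multiplicities of $+1$ and $-1$ among the diagonal entries are invariants of $Z_{1}$ (permutation conjugation only reshuffles diagonal entries), so these parameters are the same whether computed via $\lambda$ or $\mu$, and the ordering constraint in \ref{eq:2.15} pins down both matrices to the unique arrangement starting with $a_{1}^{1}$ entries $+1$ followed by $a_{2}^{1}$ entries $-1$.

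For the inductive step, the key remark is that the compatibility condition $a_{i}^{h-1}=a_{2i-1}^{h}+a_{2i}^{h}$ appearing in \ref{eq:2.15} is hereditary: if the ordered basis $\{Z_{1},\ldots,Z_{k}\}$ is in form \ref{eq:2.15}, so is the truncated ordered basis $\{Z_{1},\ldots,Z_{k-1}\}$ of $\langle Z_{1},\ldots,Z_{k-1}\rangle$. Applying the inductive hypothesis under the same $\lambda$ and $\mu$ to this shorter basis gives $\lambda(Z_{h})=\mu(Z_{h})$ for all $h\in[k-1]$, hence $\nu$ fixes $\lambda(Z_{1}),\ldots,\lambda(Z_{k-1})$. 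Lemma \ref{lem:2.15} then places $\nu$ inside the Young subgroup $\prod_{i=1}^{2^{k-1}}S_{a_{i}^{k-1}}$.

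The remaining step, and the only delicate point I expect to face, is to use this Young-subgroup constraint on $\nu$ to conclude $\nu(\lambda(Z_{k}))=\lambda(Z_{k})$. Since $\nu$ stabilises each of the $2^{k-1}$ blocks of sizes $a_{i}^{k-1}$, it acts block-by-block. Inside the $i$-th such block, $\lambda(Z_{k})$ has the strictly ordered pattern $(+1,\ldots,+1,-1,\ldots,-1)$ with multiplicities $(a_{2i-1}^{k},a_{2i}^{k})$ as required by \ref{eq:2.15}, and $\nu(\lambda(Z_{k}))=\mu(Z_{k})$ has the same pattern with the parameters prescribed by the $\mu$-form of \ref{eq:2.15}. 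Permuting entries inside a block does not change the multiset of $\pm 1$'s, so both multiplicity pairs agree; the strict ordering in \ref{eq:2.15} then forces both block-restrictions to equal the unique diagonal arrangement with these multiplicities, whence $\nu(\lambda(Z_{k}))=\lambda(Z_{k})$. This closes the induction. The main obstacle is exactly this rigidity argument, matching the freedom allowed by Lemma \ref{lem:2.15} against the strict ordering imposed at the next level of \ref{eq:2.15}; everything else is bookkeeping.
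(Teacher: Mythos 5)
Your proposal is correct and follows essentially the same route as the paper's proof: induction on $k$, applying the inductive hypothesis to the truncated basis (the paper phrases this by applying IH directly to $\{Z_1,\ldots,Z_k\}$ inside a $(k+1)$-element basis), invoking Lemma \ref{lem:2.15} to place the quotient permutation in the Young subgroup $\prod_i S_{a_i^{k-1}}$, and finishing with the block-by-block rigidity argument that reduces to the base case $k=1$. The only cosmetic difference is that you work with $\nu=\mu\lambda^{-1}$ while the paper uses $\lambda\mu^{-1}$ acting on $\mu(Z_{k+1})$; the content is identical.
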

\begin{proof}
By induction on $k$.

$k=1$: The assertion is true because $a_{1}^{1}$ and $a_{2}^{1}$
are equal to the number of eigenvalues equal to $1$ and $-1$ in $Z_{1}$
respectively.

$k\mapsto k+1$: If $\overrightarrow{\mathcal{B}}=\left\{ Z_{1},\ldots,Z_{k+1}\right\}$
and $\lambda{\left(\overrightarrow{\mathcal{B}}\right)}$ and $\mu{\left(\overrightarrow{\mathcal{B}}\right)}$ are of the form \ref{eq:2.15}, then, by the inductive hypothesis,
$\lambda{\left(Z_{i}\right)}=\mu{\left(Z_{i}\right)}$ for any $i\in[k]$.
In other words, if 
\[
\begin{aligned}
&\left\{\left(a_{1}^{h},\ldots,a_{2^{h}}^{h}\right)\right\} _{h\in[k+1]},\\
&\left\{\left(b_{1}^{h},\ldots,b_{2^{h}}^{h}\right)\right\} _{h\in[k+1]}
\end{aligned}
\]
are the partitions of $n$ determined by $\lambda$ and $\mu$ respectively, then
$a_{i}^{h}=b_{i}^{h}$ for $h\in[k]$.

\noindent
Now, ${\left(\lambda\mu^{-1}\right)}{\left(\mu{\left(Z_{i}\right)}\right)}=\lambda{\left(Z_{i}\right)}=\mu{\left(Z_{i}\right)}$ for any $i\in[k]$.
By Lemma \ref{lem:2.15}, $\lambda\mu^{-1}=\left(\alpha_{1},\ldots,\alpha_{2^{k}}\right)\in\prod\limits_{\substack{i=1}}^{2^k}{S_{a_{i}^{k}}}$. Write $\lambda{\left(Z_{k+1}\right)}=\mathrm{diag^{2}}{\left(W_{1},\ldots,W_{2^{k}}\right)}$ and $\mu{\left(Z_{k+1}\right)}=\mathrm{diag^{2}}{\left(V_{1},\ldots,V_{2^{k}}\right)}$,
where
\[
\begin{aligned}
W_{i}&=\mathrm{diag}{\left(\overset{a_{2i-1}^{k+1}}{\overbrace{1,\ldots,1}},\overset{a_{2i}^{k+1}}{\overbrace{-1,\ldots,-1}}\right)},\\
V_{i}&=\mathrm{diag}{\left(\overset{b_{2i-1}^{k+1}}{\overbrace{1,\ldots,1}},\overset{b_{2i}^{k+1}}{\overbrace{-1,\ldots,-1}}\right)}
\end{aligned}
\]
for $i\in[2^{k}]$. We have that 
\[
\begin{aligned}
\lambda{\left(Z_{k+1}\right)}&={\left(\lambda\mu^{-1}\right)}{\left(\mu{\left(Z_{k+1}\right)}\right)}\\
&=\mathrm{diag^{2}}{\left(\alpha_{1}{\left(V_{1}\right)},\ldots,\alpha_{2^{k}}{\left(V_{2^{k}}\right)}\right)}=\mathrm{diag^{2}}{\left(W_{1},\ldots,W_{2^{k}}\right)}.
\end{aligned}
\]
It follows that the $\alpha_{i}{\left(V_{i}\right)}$'s have
the form \ref{eq:2.15}, and by the case $k=1$, $\alpha_{i}{\left(V_{i}\right)}=V_{i}$
for $i\in[2^{k}]$. So $V_{i}=W_{i}$ for any $i\in[2^{k}]$
and this concludes the proof.
\end{proof}
\begin{lem}
\label{lem:2.17}
Let $H\leq\boldsymbol{\mu}_{\boldsymbol{2}}^{n}$ such that $-I_{2n}\notin H$,
$\overrightarrow{\mathcal{B}}=\left\{ Z_{1},\ldots,Z_{k}\right\} $
an ordered basis of $H$, $\overrightarrow{\mathcal{B^{\prime}}}=\left\{ Z_{1},\ldots,Z_{j-1},-Z_{j},Z_{j+1},\ldots,Z_{k}\right\} $
and 
\begin{equation}
\label{eq:parti}
\begin{aligned}
&\left\{ \left(a_{1}^{h},\ldots,a_{2^{h}}^{h}\right)\right\} _{h\in[k]},\\
&\left\{ \left(b_{1}^{h},\ldots,b_{2^{h}}^{h}\right)\right\} _{h\in[k]}
\end{aligned}
\end{equation}
the partitions of $n$ associated to $\overrightarrow{\mathcal{B}}$
and $\overrightarrow{\mathcal{B^{\prime}}}$ respectively. Then for any $h\in[k]$,
there exists a permutation $\lambda_{h}\in S_{2^{h}}$ such that $b_{i}^{h}=a_{\lambda_{h}{\left(i\right)}}^{h}$
for all $i\in[2^{h}]$.
\end{lem}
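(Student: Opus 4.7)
The plan is to reinterpret each partition $(a_1^h,\ldots,a_{2^h}^h)$ as recording the sizes of the fibers of a joint sign map $[n]\to\{\pm 1\}^h$, and then to observe that flipping the sign of one basis element simply relabels the codomain $\{\pm 1\}^h$ and so permutes the fibers without changing them as subsets of $[n]$.

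More precisely, for each $i\in[k]$ view the diagonal of $Z_i$ as a function $\epsilon_i:[n]\to\{\pm 1\}$, and for every $h\in[k]$ let $\vec{\epsilon}_h:=(\epsilon_1,\ldots,\epsilon_h):[n]\to\{\pm 1\}^h$. The first step is to show that the normalized form \ref{eq:2.15} forces $\Phi$ to reorder $[n]$ so that, reading left to right, the diagonal positions fall into $2^h$ consecutive blocks indexed lexicographically by $\{\pm 1\}^h$, with the $i$-th block equal to the $\vec{\epsilon}_h$-fiber over a specific $s_i\in\{\pm 1\}^h$. Consequently $a_i^h=\lvert\vec{\epsilon}_h^{-1}(s_i)\rvert$. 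This is an induction on $h$ whose inductive step is exactly the refinement identity $a_i^{h-1}=a_{2i-1}^h+a_{2i}^h$: inside the block of size $a_i^{h-1}$ coming from level $h{-}1$, the diagonal of $\Phi(Z_h)$ splits it into a $+1$-sub-block of size $a_{2i-1}^h$ and a $-1$-sub-block of size $a_{2i}^h$, which are precisely the two fibers of $\vec{\epsilon}_h$ lying over the refinement of $s_i^{(h-1)}$ by the $h$-th sign. The analogous statement holds for $\mathcal{B}'$ with some permutation $\Phi'$, producing $b_i^h=\lvert(\vec{\epsilon}'_h)^{-1}(s_i)\rvert$, where $\vec{\epsilon}'_h$ is defined from $\mathcal{B}'$.

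Next I would compare the two joint maps. For $h<j$ one has $\vec{\epsilon}'_h=\vec{\epsilon}_h$ because the first $h$ basis elements coincide, so the multisets of fiber sizes are identical and one may take $\lambda_h=\mathrm{id}$. For $h\geq j$ the map $\vec{\epsilon}'_h$ is the composition of $\vec{\epsilon}_h$ with the bijection $\nu_j:\{\pm 1\}^h\to\{\pm 1\}^h$ that negates the $j$-th coordinate. Hence for every $s\in\{\pm 1\}^h$ the fiber $(\vec{\epsilon}'_h)^{-1}(s)$ equals $\vec{\epsilon}_h^{-1}(\nu_j(s))$; the collection of fibers as subsets of $[n]$ is unchanged, only the indexing in $\{\pm 1\}^h$ is permuted.

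Combining the two steps, there is a unique permutation $\lambda_h\in S_{2^h}$ (the one induced by $\nu_j$ via the lex labeling $[2^h]\leftrightarrow\{\pm 1\}^h$) such that $b_i^h=a_{\lambda_h(i)}^h$ for all $i\in[2^h]$, proving the lemma. The only genuinely delicate point is step one, namely verifying rigorously that the entries $a_i^h$ in the normalized form \ref{eq:2.15} really are fiber sizes of $\vec{\epsilon}_h$ in the stated ordering; this is a bookkeeping induction rather than a conceptual obstacle, but the nested recursive structure of \ref{eq:2.15} must be unpacked with care. Once this identification is in hand, the invariance under negating a basis element is automatic from the fact that $\nu_j$ is a bijection.
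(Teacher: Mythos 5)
Your proposal is correct, and it reframes the argument in a way that is genuinely cleaner than the paper's, even though the underlying combinatorial content is the same. The paper proceeds concretely: it fixes a permutation $\mu$ putting $\overrightarrow{\mathcal{B}}$ in the normal form \ref{eq:2.15}, observes that $\mu(\overrightarrow{\mathcal{B}'})$ is then normalized in every slot except the $j$-th (whose signs are flipped), and writes down an explicit permutation $\lambda$ of $[n]$ that swaps adjacent level-$j$ blocks pairwise to restore the normal form; it then reads off explicit formulas for the induced $\lambda_{h}\in S_{2^{h}}$ as nested products of transpositions. You instead identify $a_{i}^{h}$ with the size of the fiber of the joint sign map $\vec{\epsilon}_{h}=(\epsilon_{1},\ldots,\epsilon_{h}):[n]\to\{\pm1\}^{h}$ over the $i$-th element of $\{\pm1\}^{h}$ in lexicographic order, which is manifestly independent of the normalizing permutation. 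Negating $Z_{j}$ then acts on the codomain by the coordinate-flip $\nu_{j}$, so the unordered collection of fibers is unchanged and only their lex labels are permuted; $\lambda_{h}$ is just the permutation of $[2^{h}]$ induced by $\nu_{j}$ under the lex bijection. What the paper's route buys is explicit closed formulas for $\lambda_{h}$ (though these are not used later); what your route buys is a uniform invariance principle that dispenses with block-swapping bookkeeping, makes the independence from $\Phi$ (the content of Lemmas \ref{lem:2.15}--\ref{lem:2.16}) immediate, and would equally streamline Lemma \ref{lem:2.18}. The one place you rightly flag as requiring care — verifying the lex-ordering identification in step one by unwinding the refinement relation $a_{i}^{h-1}=a_{2i-1}^{h}+a_{2i}^{h}$ — is a straightforward induction and not a gap.
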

\begin{proof}
First of all, notice that partitions \ref{eq:parti} are uniquely determined by Lemma \ref{lem:2.16}. Let $\mu\in S_{n}$ such that $\mu{\left(\overrightarrow{\mathcal{B}}\right)}$ is of the
form \ref{eq:2.15}. Then the matrices of $\mu{\left(\overrightarrow{\mathcal{B}^{\prime}}\right)}$
assume the following form:
\[
\mu{\left(Z_{h}\right)}=\mathrm{diag^{2}}{\left(\overset{a_{1}^{h}}{\overbrace{1,\ldots,1}},\overset{a_{2}^{h}}{\overbrace{-1,\ldots,-1}},\ldots,\overset{a_{2^{h}-1}^{h}}{\overbrace{1,\ldots,1}},\overset{a_{2^{h}}^{h}}{\overbrace{-1,\ldots,-1}}\right)}
\]
for $h\neq j$ and 
\[
\mu{\left(Z_{j}\right)}=\mathrm{diag^{2}}{\left(\overset{a_{1}^{j}}{\overbrace{-1,\ldots,-1}},\overset{a_{2}^{j}}{\overbrace{1,\ldots,1}},\ldots,\overset{a_{2^{j}-1}^{j}}{-\overbrace{1,\ldots,-1}},\overset{a_{2^{j}}^{j}}{\overbrace{1,\ldots,1}}\right)}.
\]
Let $\lambda\in S_{n}$ such that, for any $l\in[2^{j}]$,
if $\left(\sum\limits_{\substack{i=1}}^{l-1}a_{i}^{j}+1\right)\leq s\leq\sum\limits_{\substack{i=1}}^{l}a_{i}^{j}$,
then 
\[
\lambda{\left(s\right)}=\begin{cases}
s+a_{l+1}^{j} & \text{if $l$ is odd}\\
s-a_{l-1}^{j} & \text{if $l$ is even}
\end{cases}
\]
($\lambda$ is the permutation that exchanges pairwise the
blocks of size $a_{2i-1}^{j}$, $a_{2i}^{j}$, $i\in[2^{j-1}]$).
It is easy to check that $\lambda\mu{\left(\overrightarrow{\mathcal{B}^{\prime}}\right)}$
is of the form \ref{eq:2.15} and that the desired permutations $\lambda_{h}$
are the following:
\[
\lambda_{h}=\begin{cases}
\mathrm{id}_{\left[n\right]} & \text{if $1\leq h\leq j-1$}\\
\prod\limits_{\substack{l=0}}^{2\left(2^{j}-1\right)}{\left(\prod\limits_{\substack{i=1}}^{2^{h-j}}{\left(i+l2^{h-j},i+\left(l+1\right)2^{h-j}\right)}\right)} & \text{if $j\leq h\leq k$.}
\end{cases}
\]
\end{proof}

\begin{lem}
\label{lem:2.18}
Let $H$ and $\overrightarrow{\mathcal{B}}$
like in Lemma \ref{lem:2.17}, $\overrightarrow{\mathcal{B}^{\prime}}$
another ordered basis of $H$ and consider the partitions of $n$ as in \ref{eq:parti} associated to $\overrightarrow{\mathcal{B}}$ and $\overrightarrow{\mathcal{B^{\prime}}}$.Then for any $h\in[k]$, there exists a permutation $\lambda_{h}\in S_{2^{h}}$ such that $b_{i}^{h}=a_{\lambda_{h}{\left(i\right)}}^{h}$
for all $i\in[2^{h}]$.
\end{lem}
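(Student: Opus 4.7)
The plan is to reduce an arbitrary change of ordered basis $\overrightarrow{\mathcal{B}}\to\overrightarrow{\mathcal{B}}'$ to a composition of elementary operations and to invoke Lemma~\ref{lem:2.17} together with short direct arguments for each. Since $-I_{2n}\notin H$, the set of signed ordered bases of $H$ admits a transitive action of the semidirect product $\boldsymbol{\mu}_{\boldsymbol{2}}^{k}\rtimes\mathrm{GL}(k,\mathbb{F}_{2})$. This group is generated by three kinds of moves: (i) sign flips $Z_{j}\mapsto -Z_{j}$; (ii) transpositions $(Z_{i},Z_{j})\mapsto(Z_{j},Z_{i})$; and (iii) transvections $Z_{j}\mapsto Z_{j}Z_{i}$ with $i\neq j$. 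Any two signed bases differ by a composition of these.

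Case (i) is precisely Lemma~\ref{lem:2.17}, which produces permutations $\lambda_{h}^{(\mathrm{flip})}\in S_{2^{h}}$. For (ii) and (iii), the key observation is that the level-$h$ partition $(a_{1}^{h},\ldots,a_{2^{h}}^{h})$ is, up to reindexing, the multiset of sizes of the common eigenspaces of $Z_{1},\ldots,Z_{h}$, labelled by the characters of $\langle Z_{1},\ldots,Z_{h}\rangle$ in $\{\pm 1\}^{h}$. Consequently, if the elementary move alters only basis elements of index greater than $h$, the level-$h$ partition is literally unchanged and $\lambda_{h}$ may be taken to be the identity. If the move affects only elements of index at most $h$ and preserves the subgroup $\langle Z_{1},\ldots,Z_{h}\rangle$ (as for a transvection $Z_{j}\mapsto Z_{j}Z_{i}$ with both $i,j\leq h$, or a transposition inside the first $h$ basis elements), then the collection of common eigenspaces is the same family of subsets of $[n]$; only the indexing by characters changes, and the induced permutation of $\{\pm 1\}^{h}$ delivers the required $\lambda_{h}$.

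The delicate case is an elementary move that crosses the level-$h$ threshold, so that the subgroup $\langle Z_{1},\ldots,Z_{h}\rangle$ genuinely changes. Here I would exploit the refinement relation $a_{i}^{h-1}=a_{2i-1}^{h}+a_{2i}^{h}$ together with Lemma~\ref{lem:2.16}: both bases can be put into the canonical form \eqref{eq:2.15} by permutations $\Phi,\Phi'\in S_{n}$, and $\Phi'\Phi^{-1}$ is then shown, inductively on $h$ using the uniqueness in Lemma~\ref{lem:2.16}, to preserve at each level the block structure up to a permutation of the $2^{h}$ blocks. The intrinsic anchor is the top-level partition at $h=k$, indexed by characters of the full group $H$ itself, which is manifestly basis-independent as a multiset.

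Finally, I would compose the elementary permutations along the chosen decomposition of the basis change into moves $\sigma_{1},\ldots,\sigma_{r}$: the product $\lambda_{h}:=\lambda_{h}^{(r)}\circ\cdots\circ\lambda_{h}^{(1)}$ then provides the required permutation at level $h$. The main obstacle will be the bookkeeping for transvections of type (iii) that involve indices on both sides of $h$, where the flag $\langle Z_{1}\rangle\subset\cdots\subset\langle Z_{1},\ldots,Z_{h}\rangle$ is not preserved and one must invoke the refinement relation together with Lemma~\ref{lem:2.16} to identify the right permutation of blocks.
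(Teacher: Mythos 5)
Your strategy — reduce an arbitrary change of ordered basis to a sequence of elementary moves and treat each move separately — is the same as the paper's, but your structural observation is a genuine improvement over the paper's bare computation: the level-$h$ multiset $\{a_{1}^{h},\ldots,a_{2^{h}}^{h}\}$ is precisely the multiset of dimensions of the simultaneous eigenspaces of the subgroup $\langle Z_{1},\ldots,Z_{h}\rangle$ acting on the diagonal $n$-space, one for each of its $2^{h}$ characters. This disposes, with no computation, of every elementary move that preserves each partial subgroup $\langle Z_{1},\ldots,Z_{h}\rangle$: upper-unitriangular transvections, transpositions within the first $h$ indices, and all sign flips via Lemma~\ref{lem:2.17}.

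The gap is exactly where you flag it, and it cannot be closed. For a move that changes $\langle Z_{1},\ldots,Z_{h}\rangle$ — say $Z_{j}\mapsto Z_{j}Z_{i}$ with $j\leq h<i$ — your proposed resolution is circular: invoking the refinement relation and ``the uniqueness in Lemma~\ref{lem:2.16}, inductively on $h$, to show $\Phi'\Phi^{-1}$ preserves the block structure up to a permutation of the $2^{h}$ blocks'' assumes the very conclusion of the lemma. In fact the claim is false at intermediate levels. Take $n=4$, $Z_{1}=\mathrm{diag}^{2}(1,-1,-1,-1)$, $Z_{2}=\mathrm{diag}^{2}(1,1,-1,1)$, so $-I_{8}\notin H:=\langle Z_{1},Z_{2}\rangle$. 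The basis $\{Z_{1},Z_{2}\}$ has level-$1$ partition $(1,3)$, while the basis $\{Z_{1}Z_{2},Z_{2}\}$ — the move $I_{2}+E_{2,1}$, with $Z_{1}Z_{2}=\mathrm{diag}^{2}(1,-1,1,-1)$ — has level-$1$ partition $(2,2)$, and no $\lambda_{1}\in S_{2}$ can match them. The paper's proof does not cover this move either: it asserts that $\mathrm{GL}(k,\mathbb{Z}_{2})$ is generated by the superdiagonal transvections $I_{k}+E_{i,i+1}$, which is false (for $k=2$ that group has order $2$ while $\left|\mathrm{GL}(2,\mathbb{Z}_{2})\right|=6$), and then verifies only the flag-preserving case $I_{k}+E_{1,2}$. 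What does hold, and what Theorem~\ref{thm:2.19} actually uses, is the level-$k$ statement, and your ``intrinsic anchor'' observation proves it at once: the multiset $\{a_{1}^{k},\ldots,a_{2^{k}}^{k}\}$ is the multiset of eigenspace dimensions of $H$ itself, which manifestly does not depend on the ordered basis.
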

\begin{proof}
First of all, let us give an explicit description of the
action of an element in $\mathrm{GL}{\left(k,\mathbb{Z}_{2}\right)}$
on an ordered basis of $H$: if $A=\left(a_{ij}\right)_{i,j\in[k]}\in\mathrm{GL}{\left(k,\mathbb{Z}_{2}\right)}$ and $\overrightarrow{\mathcal{B}}=\left\{ Z_{1},\ldots,Z_{k}\right\}$,
then
\[ 
A{\left(\overrightarrow{\mathcal{B}}\right)}:=\left\{\prod\limits_{\substack{i=1}}^kZ_{i}^{a_{ij}}\right\} _{j\in[k]}.
\]
It is known that $\mathrm{GL}{\left(k,\mathbb{Z}_{2}\right)}$ is generated
as a group by the matrices of the form $I_{k}+E_{i,i+1}$ for $i=1,\ldots,k-1$,
where $E_{i,i+1}$ is the $\left(i,i+1\right)$-th elementary matrix.
Then, it is sufficient to prove the lemma only for the change of basis
given by these matrices. For simplicity, we will give the proof only
in the case where the change of basis is given by $I_{k}+E_{1,2}$,
being the proof in the other cases completely analogous. Therefore,
the basis involved are 
\[
\begin{aligned}
\overrightarrow{\mathcal{B}}&=\left\{ Z_{1},\ldots,Z_{k}\right\},\\
\overrightarrow{\mathcal{B^{\prime}}}&=\left\{ Z_{1},Z_{1}Z_{2},Z_{3},\ldots,Z_{k}\right\}.
\end{aligned}
\]
Let $\mu\in S_{n}$ such that $\mu{\left(\overrightarrow{\mathcal{B}}\right)}$
is of the form \ref{eq:2.15}. Then
\[
\mu{\left(Z_{1}Z_{2}\right)}=\mathrm{diag^{2}}{\left(\overset{a_{1}^{2}}{\overbrace{1,\ldots,1}},\overset{a_{2}^{2}}{\overbrace{-1,\ldots,-1}},\overset{a_{3}^{2}}{\overbrace{-1,\ldots,-1}},\overset{a_{4}^{2}}{\overbrace{1,\ldots,1}}\right)}.
\]
Let $\lambda\in S_{n}$ such that 
\[
\lambda{\left(s\right)}=\begin{cases}
s & \text{if $1\leq  s\leq a_{1}^{2}+a_{2}^{2}$}\\
s+a_{4}^{2} & \text{if $a_{1}^{2}+a_{2}^{2}+1\leq s\leq a_{1}^{2}+a_{2}^{2}+a_{3}^{2}$}\\
s-a_{3}^{2} & \text{otherwise}
\end{cases}
\]
($\lambda$ is the permutation exchanging the blocks of
size $a_{3}^{2}$, $a_{4}^{2}$). Then $\lambda\mu{\left(\overrightarrow{\mathcal{B}^{\prime}}\right)}$
is of the form \ref{eq:2.15} and it turns out that $a_{1}^{1}=b_{1}^{1}$,
$a_{2}^{1}=b_{2}^{1}$ and that, for $h=2,\ldots,k$,
\[
b_{i}^{h}=\begin{cases}
a_{i}^{h} & \text{if $1\leq i\leq2^{h-1}$}\\
a_{i+2^{h-2}}^{h} & \text{if $2^{h-1}+1\leq i\leq2^{h-1}+2^{h-2}$}\\
a_{i-2^{h-2}}^{h} & \text{if $2^{h-1}+2^{h-2}+1\leq i\leq2^{h}$.}
\end{cases}
\]
\end{proof}
Summaring up, Lemma \ref{lem:2.15} together with Lemma \ref{lem:2.16} prove \ref{enu:2.1}, Lemma \ref{lem:2.17} proves \ref{enu:2.2} and Lemma \ref{lem:2.18} proves \ref{enu:2.3}.

Finally, we are able to prove the following
\begin{thm}
\label{thm:2.19}
Let $H$ be a subgroup of $\boldsymbol{\mu}_{\boldsymbol{2}}^{n}$
containing $\boldsymbol{Z}$. There exists a unique set-partition $\left\{ \Pi_{i}\right\} _{i\in[2^{k}]}$
of $\Pi=\left\{ \varphi^{m_{1}},\ldots,\varphi^{m_{n}}\right\} $
such that 
\begin{equation}
\label{eq:theo}
\mathcal{U}_{n,H}^{\xi}\cong\prod\limits_{\substack{i=1}}^{2^{k}}{\mathcal{U}_{a_{i}^{k}}^{\Pi_{i}{\left(\xi\right)}}}
\end{equation}
where 
\[
\Pi_{i}{\left(\xi\right)}:=\mathrm{diag}{\left(\left(\xi_{h}\right)_{h\in\Pi_{i}},\left(\xi_{h}^{-1}\right)_{h\in\Pi_{i}}\right)}
\]
and $a_{i}^{k}=\left|\Pi_{i}\right|$ for any $i\in[2^{k}]$.
\end{thm}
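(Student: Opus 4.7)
The plan is to combine the normal-form analysis of Lemmas \ref{lem:2.15}--\ref{lem:2.18} with the block decomposition argument already employed in the proof of Proposition \ref{prop:2.3}. Given $H$ with $\boldsymbol{Z} \leq H \leq \boldsymbol{\mu}_{\boldsymbol{2}}^{n}$, the quotient $H/\boldsymbol{Z}$ has rank $k$, and by the preceding lemmas the unordered partition $(a_{1}^{k},\ldots,a_{2^{k}}^{k})$ of $n$ is intrinsically attached to $H$, independent of the choice of basis, of representatives in $\boldsymbol{\mu}_{\boldsymbol{2}}^{n}$, and of the permutation $\Phi$ bringing a basis into the normal form \ref{eq:2.15}.

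First, I fix such a basis $\overrightarrow{\mathcal{B}} = \{Z_{1}, \ldots, Z_{k}\}$ of $H/\boldsymbol{Z}$ and a symplectic permutation $\Phi \in S_{n}$ so that each $\Phi(Z_{h})$ has the form \ref{eq:2.15}. The joint eigenspace decomposition for $\Phi(\overrightarrow{\mathcal{B}})$ partitions $\{1, \ldots, n\}$ into $2^{k}$ blocks of sizes $a_{1}^{k}, \ldots, a_{2^{k}}^{k}$; setting $\Pi_{i}$ to be the subset of eigenvalues $\varphi^{m_{j}}$ whose index $j$ lies in the $i$-th block yields a set-partition of $\Pi$. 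Because $\xi$ is diagonal and $\Phi$ acts symplectically, preserving the pairing between the positive and negative coordinates, $\Phi(\xi)$ becomes block-diagonal with the $i$-th symplectic block equal precisely to $\Pi_{i}(\xi)$.

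Next, for any $X = (A_{1}, B_{1}, \ldots, A_{g}, B_{g}) \in \mathcal{U}_{n,H}^{\xi}$ every element of $H$ commutes with each $A_{i}$ and $B_{i}$ by definition. After conjugating by $\Phi$, the elements of $H$ are simultaneously diagonal with eigenspaces labelled by the $2^{k}$ blocks, so $\Phi(A_{i})$ and $\Phi(B_{i})$ must preserve each such eigenspace together with its symplectic partner. This forces $\Phi(A_{i}), \Phi(B_{i})$ to be block-diagonal symplectic matrices whose $i$-th block lies in $\mathrm{Sp}(2a_{i}^{k}, \mathbb{K})$, and the single commutator relation $\prod_{i}[A_{i}:B_{i}] = \xi$ decouples into one independent commutator equation per block, of exactly the shape defining $\mathcal{U}_{a_{i}^{k}}^{\Pi_{i}(\xi)}$. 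Assembling the pieces and conjugating back by $\Phi^{-1}$ yields an inverse morphism, giving the isomorphism \ref{eq:theo}, with $a_{i}^{k} = \lvert \Pi_{i}\rvert$ immediate from the construction. Uniqueness of $\{\Pi_{i}\}$ follows because the genericity condition \ref{eq:2.1} forces the eigenvalues $\varphi^{m_{1}}, \ldots, \varphi^{m_{n}}$ to be pairwise distinct, so the assignment of eigenvalues to blocks is determined by the block decomposition itself, which in turn is intrinsic to $H$ by Lemmas \ref{lem:2.15}--\ref{lem:2.18}.

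The main delicate point I expect in writing out the full argument is verifying that $\Phi$ can be (and must be) chosen as a \emph{symplectic} permutation, so that the resulting $2^{k}$ factors are genuinely symplectic groups and the diagonal entries of $\xi$ pair up with their inverses inside the same block. This rests on the fact that $H \leq T$ and every element of $H$ already has the symmetric form $\mathrm{diag}^{2}(\varepsilon_{1}, \ldots, \varepsilon_{n})$, so the splitting $e_{i} \leftrightarrow e_{-i}$ of the basis $\overrightarrow{\mathcal{B}}$ is preserved throughout; once this symplectic compatibility is checked, the rest of the argument is a direct packaging of the block decomposition already appearing in Proposition \ref{prop:2.3}.
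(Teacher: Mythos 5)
Your proposal is correct and follows essentially the same route as the paper's proof: use Lemmas \ref{lem:2.15}--\ref{lem:2.18} to establish that the partition and normal form are intrinsic to $H$, conjugate by a permutation to put the basis elements into the form \ref{eq:2.15}, observe that commutation with $H$ forces a block decomposition respecting the symplectic pairing, and decouple the commutator equation. The only (minor) point you elide is that after conjugating by the $S_{n}$-permutation $\Phi$, the paper applies a second permutation $\lambda\in S_{2n}$ to reorganize the four corner sub-blocks into $2^{k}$ genuine $\mathrm{Sp}(2a_{i}^{k})$-blocks, but you correctly identify and resolve the underlying delicate point (that the $e_{i}\leftrightarrow e_{-i}$ pairing is preserved throughout), so the content matches.
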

\begin{rem}
\label{rem:2.20}
For any $i\in[2^{k}]$, $\Pi_{i}{\left(\xi\right)}$ is a matrix
uniquely determined by $\Pi_{i}$ up the action of a permutation $\varphi\in S_{a_{i}^{k}}$.
So $\mathcal{U}_{a_{i}^{k}}^{\Pi_{i}{\left(\xi\right)}}$ is uniquely
determined up to isomorphism induced by such a $\varphi$. 
\end{rem}

\begin{proof}[Proof of Theorem \ref{thm:2.19}]
Notice that for $i\in[2^{k}]$, the eigenvalues of
$\Pi_{i}{\left(\xi\right)}$ satisfy \ref{eq:2.1} because of Remark \ref{rem:sub}, so $\mathcal{U}_{a_{i}^{k}}^{\Pi_{i}\left(\xi\right)}$
is well defined. Let $X=\left(A_{1},B_{1},\ldots,A_{g},B_{g}\right)\in\mathcal{U}_{n}^{\xi}$,
$\overrightarrow{\mathcal{B}}=\left\{ Z_{1},\ldots,Z_{k}\right\} $
an ordered basis of $H/\boldsymbol{Z}$. Then $X\in\mathcal{U}_{n,H}^{\xi}$
if and only if, for $i\in[g]$ and $j\in[k]$,  
\begin{equation}
\label{eq:equazioni}
\begin{aligned}
A_{i}Z_{j}&=Z_{j}A_{i},\\
B_{i}Z_{j}&=Z_{j}B_{i}.
\end{aligned}
\end{equation}
Applying a permutation $\Phi$
such that $\Phi{\left(\overrightarrow{\mathcal{B}}\right)}$ is of the
form \ref{eq:2.15}  to equations \ref{eq:equazioni}, we have, for $i\in[g]$ and $j\in[k]$,
\[
\begin{aligned}
\Phi{\left(A_{i}\right)}\Phi{\left(Z_{j}\right)}&=\Phi{\left(Z_{j}\right)}\Phi{\left(A_{i}\right)},\\
\Phi{\left(B_{i}\right)}\Phi{\left(Z_{j}\right)}&=\Phi{\left(Z_{j}\right)}\Phi{\left(B_{i}\right)}
\end{aligned}
\]
For any $i\in[g]$, it can be easily shown that
\[
\Phi{\left(A_{i}\right)}=\begin{pmatrix}
A_{i}^{1} & A_{i}^{2}\\
A_{i}^{3} & A_{i}^{4}
\end{pmatrix},\,\,\Phi{\left(B_{i}\right)}=\begin{pmatrix}
B_{i}^{1} & B_{i}^{2}\\
B_{i}^{3} & B_{i}^{4}
\end{pmatrix}
\]
where 
\[
\begin{aligned}
A_{i}^{s}&=\mathrm{diag}{\left(C_{i,1}^{s},\ldots,C_{i,2^{k}}^{s}\right)},\\
B_{i}^{j}&=\mathrm{diag}{\left(D_{i,1}^{s},\ldots,D_{i,2^{k}}^{s}\right)}
\end{aligned}
\]
and $C_{i,h}^{s}$, $D_{i,h}^{s}$ are square matrices of size $a_{h}^{k}$
for any $s=1,\ldots,4$, $h\in[2^{k}]$.
Since the $\Phi{\left(A_{i}\right)}$'s and the $\Phi{\left(B_{i}\right)}$'s are
symplectic matrices, we have for $i\in[g]$, $h\in[2^{k}]$
\begin{equation}
\label{eq:2.20}
\begin{aligned}
&\left(C_{i,h}^{1}\right)^{T}{C_{i,h}^{4}}-\left(C_{i,h}^{3}\right)^{T}C_{i,h}^{2}=I_{a_{h}^{k}},\\
&\left(C_{i,h}^{3}\right)^{T}C_{i,h}^{1}=\left(C_{i,h}^{1}\right)^{T}C_{i,h}^{3},\\
&\left(D_{i,h}^{1}\right)^{T}D_{i,h}^{4}-\left(D_{i,h}^{3}\right)^{T}D_{i,h}^{2}=I_{a_{h}^{k}},\\
&\left(D_{i,h}^{3}\right)^{T}D_{i,h}^{1}=\left(D_{i,h}^{1}\right)^{T}D_{i,h}^{3}.
\end{aligned}
\end{equation}
Moreover, $\Phi$ determines a partition of $\left\{ \Pi_{i}\right\} _{i\in[2^{k}]}$
of $\Pi$, with $a_{i}^{k}=\left|\Pi_{i}\right|$, and 
\[
\Phi{\left(\xi\right)}=\mathrm{diag}{\left(\left(\xi_{h}\right)_{h\in\Pi_{1}},\ldots,\left(\xi_{h}\right)_{h\in\Pi_{2^{k}}},\left(\xi_{h}^{-1}\right)_{h\in\Pi_{1}},\ldots,\left(\xi_{h}^{-1}\right)_{h\in\Pi_{2^{k}}}\right)}.
\]
Now, let $\lambda\in S_{2n}$ such that 
\[
{\left(\lambda\Phi\right)}{\left(Z_{j}\right)}=\mathrm{diag}{\left(\overset{2a_{1}^{j}}{\overbrace{1,\ldots,1}},\overset{2a_{2}^{j}}{\overbrace{-1,\ldots,-1}},\ldots,\overset{2a_{2^{j}-1}^{j}}{\overbrace{1,\ldots,1}},\overset{2a_{2^{j}}^{j}}{\overbrace{-1,\ldots,-1}}\right)}
\]
for all $j\in[k]$, and $\lambda$ does not move any
element in any of the blocks of size $a_{h}^{j}$. Then 
\[
\begin{aligned}
{\left(\lambda\Phi\right)}{\left(A_{i}\right)}&=\mathrm{diag}{\left(C_{i}^{1},\ldots,C_{i}^{2^{k}}\right)},\\
{\left(\lambda\Phi\right)}{\left(B_{i}\right)}&=\mathrm{diag}{\left(D_{i}^{1},\ldots,D_{i}^{2^{k}}\right)}
\end{aligned}
\]
where 
\[
C_{i}^{h}=\begin{pmatrix}
C_{i,h}^{1} & C_{i,h}^{2}\\
C_{i,h}^{3} & C_{i,h}^{4}
\end{pmatrix},\,\,D_{i}^{h}=\begin{pmatrix}
D_{i,h}^{1} & D_{i,h}^{2}\\
D_{i,h}^{3} & D_{i,h}^{4}
\end{pmatrix}
\]
for $h\in[2^{k}]$, $i\in[g]$, so the $C_{i}^{h}$'s
and the $D_{i}^{h}$'s are symplectic by \ref{eq:2.20}, and 
\[
{\left(\lambda\Phi\right)}{\left(\xi\right)}=\mathrm{diag}{\left(\Pi_{1}{\left(\xi\right)},\ldots,\Pi_{2^{k}}{\left(\xi\right)}\right)}.
\]
It follows that there is an isomorphism between $\mathcal{U}_{n,H}^{\xi}$
and $\prod\limits_{\substack{i=1}}^{2^{k}}\mathcal{U}_{a_{i}^{k}}^{\Pi_{i}{\left(\xi\right)}}$
given by 
\begin{equation}
\begin{aligned}
f:\mathcal{U}_{n,H}^{\xi}&\overset{\cong}{\longrightarrow}\prod\limits_{\substack{i=1}}^{2^{k}}{\mathcal{U}_{a_{i}^{k}}^{\Pi_{i}\left(\xi\right)}}\\
\left(A_{1},B_{1},\ldots,A_{g},B_{g}\right)&\longmapsto\left(\left(C_{i}^{1},D_{i}^{1}\right)_{i=1,\ldots g},\ldots,\left(C_{i}^{2^{k}},D_{i}^{2^{k}}\right)_{i=1,\ldots,g}\right)
\end{aligned}
\end{equation}
induced by the permutation $\lambda\Phi$. 

If we choose a different $\Phi^{\prime}$ such that $\Phi^{\prime}{\left(\overrightarrow{\mathcal{B}}\right)}$
is of the form \ref{eq:2.15}, then by Lemma \ref{lem:2.15}, $\Phi^{-1}\Phi^{\prime}=\left(\alpha_{1},\ldots,\alpha_{2^{k}}\right)\in\prod\limits_{\substack{i=1}}^{2^{k}}{S_{a_{i}^{k}}}$.
Therefore, $\Phi^{\prime}$ induces the same partition $\left\{ \Pi_{i}\right\} _{i\in[2^{k}]}$
of $\Pi$ and by \ref{rem:2.20} we are done. If $\overrightarrow{\mathcal{B}^{\prime}}$
is a different basis of $H/\boldsymbol{Z}$, by the proofs of Lemma \ref{lem:2.17}
and \ref{lem:2.18}, we have that if $\Phi^{\prime}{\left(\overrightarrow{\mathcal{B}^{\prime}}\right)}$
is of the form \ref{eq:2.15}, $\Phi^{\prime}\in S_{n}$, then $\Phi^{\prime}=\mu\Phi$,
where $\mu\in S_{n}$ permutes the blocks of size $a_{i}^{h}$. It
follows that $\Phi^{\prime}$ induces the same partition $\left\{ \Pi_{i}\right\} _{i\in[2^{k}]}$
of $\Pi$ and we are done again.
\end{proof}

\section{$E$-polynomial of $\mathcal{M}_{n}^{\xi}$}
\label{section:4}

In the following section, we compute the $E$-polynomial of $\mathcal{M}_{n}^{\xi}/\mathbb{C}$
proving that, for any $\boldsymbol{Z}\leq H\leq\boldsymbol{\mu}_{\boldsymbol{2}}^{n}$, $\widetilde{\mathcal{M}}_{n,H}^{\xi}$
has polynomial counting functions over finite fields possessing a
primitive $2m$-th root of unity. These are the $E$-polynomials thanks
to Theorem \ref{thm:7}. Finally, we use the additiveness of the $E$-polynomial
with respect to stratifications.

\subsection{The $\mathbb{F}_{q}$-points of $\widetilde{\mathcal{U}}_{n,H}^{\xi}$}

\noindent Let $q$ be a power of a prime $p>3$ . Assume that
$\mathbb{F}_{q}$ contains a primitive $2m$-th root of unity $\zeta:=\varphi^{\frac{1}{2}}$,
so $q\underset{2m}{\equiv}1$. For any $\boldsymbol{Z}\leq H\leq\boldsymbol{\mu}_{\boldsymbol{2}}^{n}$,
define 
\begin{equation}
\widetilde{N}_{n,H}^{\xi}{\left(q\right)}:=\left|\widetilde{\mathcal{U}}_{n,H}^{\xi}{\left(\mathbb{F}_{q}\right)}\right|\label{eq:3.1.1}
\end{equation}
\begin{equation}
\label{eq:3.1.2}
\,\,\,N_{n,H}^{\xi}{\left(q\right)}:=\left|\mathcal{U}_{n,H}^{\xi}{\left(\mathbb{F}_{q}\right)}\right|.
\end{equation}

These quantities are the number of rational points of $\widetilde{\mathcal{U}}_{n,H}^{\xi}/\overline{\mathbb{F}_{q}}$
and $\mathcal{U}_{n,H}^{\xi}/\overline{\mathbb{F}_{q}}$ respectively.
When $H=\boldsymbol{Z}$, we simply write $\widetilde{N}_{n}^{\xi}{\left(q\right)}$
and $N_{n}^{\xi}{\left(q\right)}$. From Definition \ref{def:2.4},
it is easy to see that 
\begin{equation}
N_{n,H}^{\xi}{\left(q\right)}=\sum\limits_{\substack{H\leq S\leq\boldsymbol{\mu}_{\boldsymbol{2}}^{n}}}{\widetilde{N}_{n,S}^{\xi}{\left(q\right)}}\label{eq:3.1.3}
\end{equation}
hence by Möbius inversion, we have
\begin{equation}
\widetilde{N}_{n,H}^{\xi}{\left(q\right)}=\sum\limits_{\substack{H\leq S\leq\boldsymbol{\mu}_{\boldsymbol{2}}^{n}}}{\mu{\left(H,S\right)}N_{n,S}^{\xi}{\left(q\right)}}\label{eq:3.1.4}
\end{equation}
where $\mu$ is the Möbius function of the poset of the
subgroups of $\boldsymbol{\mu}_{\boldsymbol{2}}^{n}$ and it is defined as follows
\begin{equation}
\mu{\left(H,S\right)}=
\begin{cases}
\left(-1\right)^{\mathrm{rk}{\left(S\right)}-\mathrm{rk}{\left(H\right)}}2^{\tbinom{\mathrm{rk}{\left(S\right)}-\mathrm{rk}{\left(H\right)}}{2}} & \text{if $H\subseteq S$}\\
0 & \text{otherwise.}
\end{cases}
\label{eq:3.15}
\end{equation}
By Theorem \ref{thm:2.19}, if $\boldsymbol{Z}\leq S\leq\boldsymbol{\mu}_{\boldsymbol{2}}^{n}$,
there exists a unique partition $\left\{ \Pi_{i}\right\} _{i\in[2^{\mathrm{rk}{\left(S\right)}}]}$
of $\Pi=\left\{ \varphi^{m_{1}},\ldots,\varphi^{m_{n}}\right\} $
such that 
\begin{equation}
N_{n,S}^{\xi}{\left(q\right)}=\prod_{i=1}^{2^{\mathrm{rk}{\left(S\right)}}}{N_{a_{i}^{\mathrm{rk}{\left(S\right)}}}^{\Pi_{i}{\left(\xi\right)}}{\left(q\right)}}\label{eq:3.1.6}
\end{equation}
where $a_{i}^{\mathrm{rk}{\left(S\right)}}=\left|\Pi_{i}\right|$ for any
$i=1,\ldots,2^{\mathrm{rk}\left(S\right)}$. Plugging \ref{eq:3.15} and \ref{eq:3.1.6} into
\ref{eq:3.1.4}, we obtain
\begin{equation}
\widetilde{N}_{n,H}^{\xi}{\left(q\right)}=\sum\limits_{\substack{H\leq S\leq\boldsymbol{\mu}_{\boldsymbol{2}}^{n}}}{\left(-1\right)^{\mathrm{rk}{\left(S\right)}-\mathrm{rk}{\left(H\right)}}2^{\tbinom{\mathrm{rk}{\left(S\right)}-\mathrm{rk}{\left(H\right)}}{2}}\prod_{i=1}^{2^{\mathrm{rk}{\left(S\right)}}}{N_{a_{i}^{\mathrm{rk}{\left(S\right)}}}^{\Pi_{i}{\left(\xi\right)}}{\left(q\right)}}}.\label{eq:3.1.7}
\end{equation}
The advantage of this equation is that we can compute $N_{a_{i}^{\mathrm{rk}{\left(S\right)}}}^{\Pi_{i}{\left(\xi\right)}}{\left(q\right)}$
using the following formula due to Frobenius (see \cite{key-17}, \cite{key-18} or \cite{key-19}).
\begin{prop}
\label{prop:frobenius}
Let $G$ be a finite group. Given
$z\in G$, the number of $2g$-tuples $\left(x_{1},y_{1},\ldots,x_{g},y_{g}\right)$
satisfying $\prod_{i=1}^{g}{\left[x_{i}:y_{i}\right]}{z}=1$
is:
\begin{equation}
\left|\left\{ \prod_{i=1}^{g}{\left[x_{i}:y_{i}\right]}{z}=1\right\} \right|=\sum\limits_{\substack{\chi\in\mathrm{Irr}{\left(G\right)}}}{\chi{\left(z\right)}{\left(\frac{\left|G\right|}{\chi{\left(1\right)}}\right)}^{2g-1}}.\label{eq:3.1.8}
\end{equation}
\end{prop}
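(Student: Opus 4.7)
The plan is to work in the group algebra $\mathbb{C}[G]$ and realise the desired count as the coefficient of a group element in a suitable central element, expanded in the basis of primitive central idempotents attached to $\mathrm{Irr}(G)$. To begin, I would introduce
\[
C_g := \sum_{(x_1,y_1,\ldots,x_g,y_g)\in G^{2g}} \prod_{i=1}^{g} [x_i:y_i] \;\in\; \mathbb{C}[G],
\]
noting that the coefficient of $w\in G$ in $C_g$ is exactly the number of $2g$-tuples with $\prod_{i}[x_i:y_i]=w$, so the count in the statement is the coefficient of $z^{-1}$ in $C_g$. Distributing the product shows $C_g=C_1^{\,g}$ in $\mathbb{C}[G]$, reducing the problem to analysing $C_1$.

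The element $C_1$ is invariant under simultaneous conjugation, so it lies in the center $Z(\mathbb{C}[G])$, which has as basis the primitive central idempotents
\[
e_\chi := \frac{\chi(1)}{|G|}\sum_{g\in G}\overline{\chi(g)}\,g,\qquad \chi\in\mathrm{Irr}(G).
\]
Writing $C_1=\sum_\chi \lambda_\chi\, e_\chi$ and using the orthogonality $e_\chi e_{\chi'}=\delta_{\chi,\chi'}\,e_\chi$ gives $C_g=\sum_\chi \lambda_\chi^{\,g}\, e_\chi$. To pin down $\lambda_\chi$, I would extend $\chi$ linearly to $\mathbb{C}[G]$ and exploit $\chi(e_{\chi'})=\delta_{\chi,\chi'}\chi(1)$ to deduce $\lambda_\chi=\chi(C_1)/\chi(1)$.

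Computing $\chi(C_1)=\sum_{x,y}\chi([x:y])$ is the only substantive step. Starting from Schur's lemma, if $\rho$ affords $\chi$, then $\sum_{x\in G}\rho(xax^{-1})=\tfrac{|G|\chi(a)}{\chi(1)}\,I$; multiplying by $\rho(b)$ and taking traces yields
\[
\sum_{x\in G} \chi(xax^{-1}b)=\frac{|G|\,\chi(a)\chi(b)}{\chi(1)}.
\]
Specialising $(a,b)=(y,y^{-1})$, summing over $y\in G$, and invoking $\sum_{y}|\chi(y)|^2=|G|$ gives $\chi(C_1)=|G|^2/\chi(1)$, so $\lambda_\chi=|G|^2/\chi(1)^2$.

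Finally, the coefficient of $z^{-1}$ in $e_\chi$ is $\tfrac{\chi(1)}{|G|}\overline{\chi(z^{-1})}=\tfrac{\chi(1)}{|G|}\chi(z)$; extracting the coefficient of $z^{-1}$ from $C_g=\sum_\chi (|G|^2/\chi(1)^2)^{g}\, e_\chi$ therefore yields
\[
\sum_{\chi\in\mathrm{Irr}(G)}\chi(z)\left(\frac{|G|}{\chi(1)}\right)^{2g-1},
\]
which is exactly the claimed formula. I do not expect any genuine obstacle here: once the commutator-character identity above is in place, the rest is purely formal. The one point demanding care is tracking $z$ versus $z^{-1}$ through the complex conjugation appearing in the definition of $e_\chi$, which is precisely why the final answer involves $\chi(z)$ rather than $\chi(z^{-1})$.
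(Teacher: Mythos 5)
Your argument is correct. The paper itself does not prove this proposition --- it simply cites Freed--Quinn, Frobenius--Schur, and Mednyh --- so there is no in-text proof to compare against; your group-algebra proof is the standard one found in those sources. Every step checks out: $C_g = C_1^{\,g}$ because the $2g$ variables range independently; $C_1$ is central by conjugation-invariance; the evaluation $\chi(e_{\chi'}) = \delta_{\chi,\chi'}\chi(1)$ correctly pins down $\lambda_\chi = \chi(C_1)/\chi(1)$; the commutator-character identity $\sum_x \chi(xax^{-1}b) = |G|\chi(a)\chi(b)/\chi(1)$ follows cleanly from Schur's lemma; summing over $y$ and using $\sum_y |\chi(y)|^2 = |G|$ gives $\lambda_\chi = |G|^2/\chi(1)^2$; and the coefficient extraction at $z^{-1}$, using $\overline{\chi(z^{-1})} = \chi(z)$, yields exactly the stated formula. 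Your closing remark about tracking $z$ versus $z^{-1}$ through the conjugation in $e_\chi$ is the right point of care, and you have handled it correctly.
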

However, our next goal is to compute $N_{n}^{\xi}{\left(q\right)}$.
We specialize \ref{eq:3.1.8} to the case where $G=\mathrm{Sp}{\left(2n,\mathbb{F}_{q}\right)}$
and $z=\xi$. 

Since $\xi$ is a regular semisimple matrix, the range of the summation
in \ref{eq:3.1.8} restricts to the set of the principal series of
$\mathrm{Sp}{\left(2n,\mathbb{F}_{q}\right)}$ by \ref{eq:1.4}.
By Proposition \ref{prop:25}, we can collect principal series of the same degree according to the type $\tau$, so combining
with \ref{eq:1.12}, we obtain that
\[
N_{n}^{\xi}{\left(q\right)}=\sum\limits_{\substack{\tau}}{\left(H_{\tau}{\left(q\right)}\right)^{2g-1}C_{\tau}}
\]
where
\[
C_{\tau}:=\sum\limits_{\substack{\tau{\left(\chi\right)}=\tau}}{\chi{\left(\xi\right)}}.
\]
Our next task is to compute $C_{\tau}$; we will find that it is an
integer constant. In particular, since the number of all possibile
types does not depend on $q$, this will show that $N_{n}^{\xi}{\left(q\right)}\in\mathbb{Z}[q]$.
\begin{rem}
For even values of $m$ and $q\underset{m}{\equiv}1$, we could not
get $N_{n}^{\xi}{\left(q\right)}$ to be a polynomial in $q$. In fact,
let $\xi=\begin{pmatrix}
i & 0\\
0 & -i
\end{pmatrix}$, $m=4$ and let us compute $N_{1}^{\xi}{\left(q\right)}$ when $q\underset{4}{\equiv}1$.
Using the well known character table of $\mathrm{SL}{\left(2,\mathbb{F}_{q}\right)}$ that one can find in \cite{key-12},
after some little algebra we get the quasi polynomial:
\begin{equation}
\label{eq:1.1.4}
\begin{aligned}
N_{1}^{\xi}{\left(q\right)}=&\left(q^{3}-q\right)^{2g-1}+\left(q^{2}-1\right)^{2g-1}\\
&+\left(\left(-1\right)^{\frac{q-1}{4}}\left(2^{2g}-1\right)-1\right)\left(q^{2}-q\right)^{2g-1}.
\end{aligned}
\end{equation}
This motivates our requirement on $q$ to be equal to $1$ modulo
$2m$.
\end{rem}

\subsection{\label{subsec:C tau}Calculation of $C_{\tau}$}

We refer to the notations used in subsection \ref{subsec:1}. If $\tau=\left(\lambda,\alpha_{1},\alpha_{\epsilon},\beta\right)$,
with $c:=\left|\lambda\right|$, $l:=l{\left(\lambda\right)}$, and
$\beta\in\mathrm{Irr}{\left(S_{\lambda,\alpha_{1},\alpha_{\epsilon}}\right)}$,
then combining Remark \ref{rem:13} and Proposition \ref{prop:19}
in formula \ref{eq:1.4} we have 
\[
\chi{\left(\xi\right)}=\frac{1}{\left|S_{\lambda,\alpha_{1},\alpha_{\epsilon}}\right|}\sum\limits_{\substack{w\in W_{n}}}{\theta^{w}{\left(\xi^{-1}\right)}\beta{\left(1\right)}}
\]
where $\theta\in\widehat{T^{F}}$ is of the form \ref{eq:1.7}. Define,
for $i=1,\ldots,n$ and $k\in Q$,
\[
\gamma_{km_{i}}:=\varphi^{km_{i}}+\varphi^{-km_{i}}.
\]
where we denote the complex counterpart of $\gamma$ in the same way. Then, after some computations, we obtain the following expression
for $C_{\tau}$:
\begin{equation}
C_{\tau}=\sum\limits_{\substack{\pi\in\mathcal{P}_{\lambda}}}{\sum\limits_{\substack{k_{1},\ldots,k_{l}\in Q \\ k_{s}\neq k_{t} \\ s\neq t}}{\left(\prod\limits_{\substack{j=1}}^{l}{\prod\limits_{\substack{s\in I_{j}}}{\gamma_{k_{j}m_{s}}}}\right)}}
\end{equation}
where $\mathcal{P}_{\lambda}$ is the set of partitions $\pi=\coprod\limits_{\substack{j=1}}^l{I_{j}}$
of $[c]$ such that $\left|I_{j}\right|=\lambda_{j}$ for any $j=1,\ldots,l$.

If $\sigma=\coprod\limits_{\substack{j=1}}^{l\left(\sigma\right)}I_{j}^{\prime}\in\Pi_{c}$,
let us consider the sets $\Sigma_{\sigma}$ and $\Sigma_{\sigma}^{\prime}$
as in \ref{eq:sigma}, replacing $\left[x\right]$ with $Q$, and define 
\begin{equation}
\Psi{\left(\sigma\right)}:=\sum\limits_{\substack{h\in\Sigma_{\sigma}}}{\left(\prod\limits_{\substack{j=1}}^{l{\left(\sigma\right)}}{\prod\limits_{\substack{s\in I_{j}^{\prime}}}{\gamma_{h{\left(I_{j}^{\prime}\right)}m_{s}}}}\right)}\label{eq:3.10}
\end{equation}
\begin{equation}
\label{eq:3.1.11}
\Phi{\left(\sigma\right)}:=\sum\limits_{\substack{h\in\Sigma_{\sigma}^{\prime}}}{\left(\prod\limits_{\substack{j=1}}^{l{\left(\sigma\right)}}{\prod\limits_{\substack{s\in I_{j}^{\prime}}}{\gamma_{h{\left(I_{j}^{\prime}\right)}m_{s}}}}\right)}
\end{equation}
It is evident that 
\begin{equation}
C_{\tau}=\sum\limits_{\substack{\pi\in\mathcal{P}_{\lambda}}}{\Phi{\left(\pi\right)}}\label{eq:3.1.12}
\end{equation}
and that $\Psi{\left(\pi\right)}=\sum\limits_{\substack{\pi\preceq\sigma}}{\Phi{\left(\sigma\right)}}$.
By Möbius inversion applied on the poset of set-partitions of $[c]$, we have
\begin{equation}
\Phi{\left(\pi\right)}=\sum\limits_{\substack{\pi\preceq\sigma}}{\mu{\left(\pi,\sigma\right)}\Psi{\left(\sigma\right)}}.
\label{eq:3.1.13}
\end{equation}
Interchanging sum and product in \ref{eq:3.10}, we have $\Psi{\left(\sigma\right)}=\prod\limits_{\substack{j=1}}^{l\left(\sigma\right)}{\Delta_{j}}$
with 
\begin{equation}
\label{eq:3.1.15}
\Delta_{j}:=\sum\limits_{\substack{k\in Q}}{\left(\prod\limits_{\substack{s\in I_{j}^\prime}}{\gamma_{km_{s}}}\right)}.
\end{equation}
Since $\varphi^{m_{1}},\ldots,\varphi^{m_{n}}$ satisfy \ref{eq:2.1}, together
with Remark \ref{rem:sub} we deduce that 
\begin{equation}
\label{eq:3.1.16}
\prod\limits_{\substack{s\in I_{j}^{\prime}}}{\gamma_{km_{s}}}=\sum\limits_{\substack{i=1}}^{2^{\lambda_{j}^{\prime}}}{\varphi_{i}^{k}}
\end{equation}
where $\lambda_{j}^{\prime}=\left|I_{j}^{\prime}\right|$ and $\varphi_{i}$'s
are primitive $k_{i}$-th roots of unity with $k_{i}>1$, $k_{i}\rvert m$.
Now, $q\underset{2m}{\equiv}1$ implies that $\left|Q\right|=\frac{q-3}{2}\underset{m}{\equiv}-1$,
so plugging \ref{eq:3.1.16} into \ref{eq:3.1.15}, we get $\Delta_{j}=-2^{\lambda_{j}^{\prime}}$ and
then $\Psi{\left(\sigma\right)}=\left(-1\right)^{l{\left(\sigma\right)}}2^{c}$.
Plugging it into \ref{eq:3.1.13} and using \ref{eq:2}, we obtain
that $\Phi{\left(\pi\right)}=2^{c}{\left(-1\right)}^{l}l!$ and consequently,
from \ref{eq:3.1.12}, 
\begin{equation}
C_{\tau}=\frac{n!\beta{\left(1\right)}2^{c}\left(-1\right)^{l}l!}{\prod\limits_{\substack{i}}{m_{i}{\left(\lambda\right)}!}\prod\limits_{\substack{i=1}}^{l}{\lambda_{i}!}\alpha_{1}!\alpha_{\varepsilon}!}=\frac{\left(-1\right)^{l}l!\left[W_{n}:S_{\lambda,\alpha_{1},\alpha_{\epsilon}}\right]\beta{\left(1\right)}}{\prod\limits_{\substack{i}}{m_{i}{\left(\lambda\right)}!}}\label{eq:3.1.14}
\end{equation}
that is an integer number. Here, $m_{i}(\lambda)=\left\{j\mid\lambda_{j}=i\right\}$, that is the multiplicity of $i$ in the partition $\lambda$, for any possible $i$.
\begin{rem}
\label{rem:.3.2}As one can see, the computation of the coefficients
$C_{\tau}$'s does not depend on the choice of $\varphi$ and $m_{1},\ldots,m_{n}$,
but only on the fact that $\varphi^{m_{1}},\ldots,\varphi^{m_{n}}$
satisfy \ref{eq:2.1}. Therefore, by Remark \ref{rem:2.1}, we get $N_{a_{i}^{\mathrm{rk}{\left(S\right)}}}^{\Pi_{i}{\left(\xi\right)}}{\left(q\right)}\in\mathbb{Z}[q]$
for all possible cases. 
\end{rem}

\subsection{Main formula}

For $\boldsymbol{Z}\leq H\leq\boldsymbol{\mu}_{\boldsymbol{2}}^{n}$, let
\begin{equation}
E_{n,H}{\left(q\right)}:=\frac{\widetilde{N}_{n,H}^{\xi}{\left(q\right)}}{\left(q-1\right)^{n}}.\label{eq:3.3.1}
\end{equation}
In accordance with \ref{rem:.3.2} and \ref{eq:3.1.7}, $\widetilde{N}_{n,H}^{\xi}{\left(q\right)}\in\mathbb{Z}[q]$.
Moreover, $\left(q-1\right)^{n}$ divides $\left|\mathrm{Sp}{\left(2n,\mathbb{F}_{q}\right)}\right|$
and combining Remark \ref{rem:20} and \ref{rem:21}, we get $\mathrm{gcd}{\left(\left(q-1\right),\chi{\left(1\right)}\right)}=1$
for all principal series $\chi$ of $\mathrm{Sp}{\left(2n,\mathbb{F}_{q}\right)}$
and this implies that $E_{n,H}{\left(q\right)}\in\mathbb{Z}[q]$.
\begin{thm}
\label{thm:main}
For all $\boldsymbol{Z}\leq H\leq\boldsymbol{\mu}_{\boldsymbol{2}}^{n}$, the variety
$\widetilde{\mathcal{M}}_{n,H}^{\xi}/\mathbb{C}$ has polynomial count
and its $E$-polynomial satisfies
\[
E{\left(\widetilde{\mathcal{M}}_{n,H}^{\xi}/\mathbb{C};q\right)}=E_{n,H}{\left(q\right)}.
\]
\end{thm}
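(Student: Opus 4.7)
The strategy is to verify the hypotheses of Katz's theorem \ref{thm:7} by computing the number of $\mathbb{F}_{q}$-points of $\widetilde{\mathcal{M}}_{n,H}^{\xi}$ and matching it to $E_{n,H}(q)$. First I would spread $\widetilde{\mathcal{M}}_{n,H}^{\xi}$, together with its presentation as a geometric quotient of $\widetilde{\mathcal{U}}_{n,H}^{\xi}$ by the torus $T$, out over a finitely generated $\mathbb{Z}$-algebra $R$ containing $\zeta=\varphi^{1/2}$ and in which $2m$ is a unit. Any homomorphism $\psi\colon R\to\mathbb{F}_{q}$ then forces $\mathrm{char}(\mathbb{F}_{q})>3$ and $q\equiv 1\pmod{2m}$, placing us in the arithmetic regime of Subsection \ref{subsec:C tau}. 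By the remarks following Definition \ref{defn:stratum}, $\widetilde{\mathcal{M}}_{n,H}^{\xi}$ is realised as the geometric quotient of $\widetilde{\mathcal{U}}_{n,H}^{\xi}$ by the free action of $T/H$.

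The crucial point is then the counting identity
\begin{equation}
\left|\widetilde{\mathcal{M}}_{n,H}^{\xi}(\mathbb{F}_{q})\right|=\frac{\left|\widetilde{\mathcal{U}}_{n,H}^{\xi}(\mathbb{F}_{q})\right|}{(q-1)^{n}}=E_{n,H}(q).
\end{equation}
Since $T\cong(\mathbb{G}_{m})^{n}$ is split and $H\leq\boldsymbol{\mu}_{\boldsymbol{2}}^{n}$ is finite of multiplicative type, $T/H$ is an $n$-dimensional split torus, so $\left|(T/H)(\mathbb{F}_{q})\right|=(q-1)^{n}$. The free action makes the projection $\widetilde{\mathcal{U}}_{n,H}^{\xi}\to\widetilde{\mathcal{M}}_{n,H}^{\xi}$ into a $(T/H)$-torsor, étale-locally trivial (in fact Zariski-locally trivial, since a split torus is a special group and one may use the affine cover of Remark \ref{rem:2.9}). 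Lang's theorem, applied to the connected algebraic group $T/H$ over $\mathbb{F}_{q}$, forces every such torsor to be trivial, so every $\mathbb{F}_{q}$-point of the base lifts to the total space and every fibre contains exactly $(q-1)^{n}$ rational points; this yields the displayed identity.

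Combining this with the polynomiality of $E_{n,H}(q)=\widetilde{N}_{n,H}^{\xi}(q)/(q-1)^{n}\in\mathbb{Z}[q]$, already established in the paragraph preceding the statement via formula \ref{eq:3.1.7} and Remark \ref{rem:.3.2}, I conclude that $\widetilde{\mathcal{M}}_{n,H}^{\xi}$ has polynomial count with counting polynomial $E_{n,H}$, and Theorem \ref{thm:7} then delivers $E(\widetilde{\mathcal{M}}_{n,H}^{\xi};q)=E_{n,H}(q)$. The main technical hurdle here is the torsor step: one must verify that the quotient map really is a locally trivial principal $(T/H)$-bundle (not merely a categorical quotient) uniformly over the spreading out, so that Lang's theorem applies fibrewise; once this is in place, everything else is formal combinatorics.
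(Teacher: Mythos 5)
Your proposal is correct and follows essentially the same route as the paper: spread out $\widetilde{\mathcal{U}}_{n,H}^{\xi}$ and its free $T/H$-action over $R=\mathbb{Z}[\zeta,\tfrac{1}{2m}]$, observe that any $R\to\mathbb{F}_q$ lands in the arithmetic regime where $\widetilde{N}_{n,H}^{\xi}(q)$ has already been computed, count fibres of the quotient map via the freeness of the $T/H$-action, and conclude by Katz. The only cosmetic difference is the reference you use for the key counting step: you invoke Lang's theorem on the connected split torus $T/H$ to see each fibre of the torsor $\widetilde{\mathcal{U}}_{n,H}^{\xi}\to\widetilde{\mathcal{M}}_{n,H}^{\xi}$ acquires exactly $(q-1)^n$ rational points, whereas the paper cites Kac's Lemma~3.2 (in \cite{key-13}), which packages exactly this Lang-type argument; the paper also leans on Seshadri \cite{key-14} to carry out geometric invariant theory over the base $R$ and on \cite[Lemma~2]{key-14} together with flatness of $R\hookrightarrow\mathbb{C}$ to verify that the quotient scheme genuinely spreads out $\widetilde{\mathcal{M}}_{n,H}^{\xi}/\mathbb{C}$, a point you gesture at but do not spell out. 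Your aside about Zariski-local triviality via specialness of split tori is more than is needed --- Lang's theorem trivialises any torsor under a connected group over a finite field, local triviality or not --- but it does no harm.
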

\begin{proof}
From the definition \ref{eq:2.9} of $\widetilde{\mathcal{U}}_{n,H}^{\xi}$
it is clear that it can be viewed as a subscheme $\mathcal{X}_{H}$
of $\mathrm{Sp}\left(2n,R\right)^{2g}$ with $R:=\mathbb{Z}{\left[\zeta,\frac{1}{2m}\right]}$
and we can do the same thing for the $\widetilde{\mathcal{U}}_{i,H}$
as in Remark \ref{rem:2.6}, calling $\mathcal{X}_{i,H}$ the corresponding
subscheme over $R$ for $i\in I$. Let $\rho:R\rightarrow\mathbb{C}$
be an embedding, then $\mathcal{X}_{H}$ and $\mathcal{X}_{i,H}$
are spreading out of $\widetilde{\mathcal{U}}_{n,H}^{\xi}/\mathbb{C}$
and $\widetilde{\mathcal{U}}_{i,H}/\mathbb{C}$ respectively.

For every homomorphism
\begin{equation}
\phi:R\longrightarrow\mathbb{F}_{q}\label{eq:ext}
\end{equation}
the image $\phi{\left(\zeta\right)}$ is a primitive $2m$-root of unity
in $\mathbb{F}_{q}$, because the identity
\[
\prod_{i=1}^{2m-1}{\left(1-\zeta^{i}\right)}=2m
\]
guarantees that $1-\zeta^{i}$ is a unit in $R$ for $i=1,\ldots,2m-1$,
and therefore cannot be zero in the image. Hence all of our previous
considerations apply to compute $\left|\mathcal{X}_{H,\phi}{\left(\mathbb{F}_{q}\right)}\right|=\widetilde{N}_{n,H}^{\xi}{\left(q\right)}$.

On the other hand, the group scheme $T_{R}:=C_{\mathrm{Sp}{\left(2n,R\right)}}{\left(\xi\right)}$
acts on $\mathcal{X}_{H}$ and the $\mathcal{X}_{i,H}$'s by conjugation,
so using Seshadri's extension of geometric invariant theory quotients
for schemes (see \cite{key-14}), we can take the geometric quotient $\mathcal{Y}_{H}=\mathcal{X}_{H}/T_{R}$,
and we can define the affine scheme $\mathcal{Y}_{i,H}=\mathrm{Spec}{\left(R{\left[\mathcal{X}_{i,H}\right]}^{T_{R}}\right)}$
over $R$ for all $i\in I$. Then $\left\{ \mathcal{Y}_{i,H}\right\} _{i\in I}$
is an open cover of affine subschemes of $\mathcal{Y}_{H}$. Because
$\rho:R\rightarrow\mathbb{C}$ is a flat morphism, \cite[Lemma 2]{key-14} implies
that $\mathcal{Y}_{i,H}$ is a spreading out of $\widetilde{\mathcal{M}}_{i,H}$
as in Remark \ref{rem:2.9} for all $i$, so $\mathcal{Y}_{H}$ is a spreading
out of $\widetilde{\mathcal{M}}_{n,H}^{\xi}/\mathbb{C}$ because of
the local nature of fibered product for schemes.

Now take an $\mathbb{F}_{q}$- point of the scheme $\mathcal{Y}_{H,\phi}$
obtained from $\mathcal{Y}_{H}$ by the extension of scalars in \ref{eq:ext}.
By \cite[Lemma 3.2]{key-13}, the fiber over it in $\mathcal{X}_{H,\phi}{\left(\mathbb{F}_{q}\right)}$
is non empty and an orbit of ${\left(T/H\right)}{\left(\mathbb{F}_{q}\right)}$
and one can easily shows that ${\left(T/H\right)}{\left(\mathbb{F}_{q}\right)}$
acts freely on $\mathcal{X}_{H,\phi}{\left(\mathbb{F}_{q}\right)}$.
Consequently
\[
\left|\mathcal{Y}_{H,\phi}{\left(\mathbb{F}_{q}\right)}\right|=\frac{\left|\mathcal{X}_{H,\phi}{\left(\mathbb{F}_{q}\right)}\right|}{\left|{\left(T/H\right)}{\left(\mathbb{F}_{q}\right)}\right|}=\frac{\widetilde{N}_{n,H}^{\xi}{\left(q\right)}}{\left(q-1\right)^{n}}=E_{n,H}{\left(q\right)}.
\]
Thus $\widetilde{\mathcal{M}}_{n,H}^{\xi}/\mathbb{C}$ has polynomial
count. Now the theorem follows from Theorem \ref{thm:7}.
\end{proof}
Define 
\[
E_{n}{\left(q\right)}:=\frac{N_{n}^{\xi}{\left(q\right)}}{\left(q-1\right)^{n}}.
\]

\begin{cor}
\label{cor:Euler}
The $E$-polynomial of $\mathcal{M}_{n}^{\xi}/\mathbb{C}$ satisfies
\begin{equation}
E{\left(\mathcal{M}_{n}^{\xi}/\mathbb{C};q\right)}=E_{n}{\left(q\right)}=\frac{1}{\left(q-1\right)^{n}}\sum\limits_{\substack{\tau}}{\left(H_{\tau}{\left(q\right)}\right)^{2g-1}C_{\tau}}.\label{eq:3.3.3}
\end{equation}
\end{cor}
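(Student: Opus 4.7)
The plan is to assemble the corollary from pieces already in place: the stratification of $\mathcal{M}_{n}^{\xi}$ by the subgroups $H$ with $\boldsymbol{Z}\leq H\leq\boldsymbol{\mu}_{\boldsymbol{2}}^{n}$, additivity of the $E$-polynomial under stratifications (Remark \ref{rem:strata}), Theorem \ref{thm:main} giving the $E$-polynomial of each stratum as $E_{n,H}(q)$, and finally the Frobenius formula \ref{prop:frobenius} together with the character-theoretic machinery of Section \ref{section:2}.

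First I would write
\[
E\left(\mathcal{M}_{n}^{\xi}/\mathbb{C};q\right)=\sum\limits_{\boldsymbol{Z}\leq H\leq\boldsymbol{\mu}_{\boldsymbol{2}}^{n}}E\left(\widetilde{\mathcal{M}}_{n,H}^{\xi}/\mathbb{C};q\right)=\sum\limits_{\boldsymbol{Z}\leq H\leq\boldsymbol{\mu}_{\boldsymbol{2}}^{n}}E_{n,H}(q)
\]
using the stratification in Remark \ref{rem:2.5} (descended to $\mathcal{M}_{n}^{\xi}$), the additivity from Remark \ref{rem:strata}, and Theorem \ref{thm:main}. By the definition \ref{eq:3.3.1} of $E_{n,H}$ and the partition identity $N_{n}^{\xi}(q)=N_{n,\boldsymbol{Z}}^{\xi}(q)=\sum_{\boldsymbol{Z}\leq H\leq\boldsymbol{\mu}_{\boldsymbol{2}}^{n}}\widetilde{N}_{n,H}^{\xi}(q)$, which is the $H=\boldsymbol{Z}$ case of \ref{eq:3.1.3}, this becomes
\[
E\left(\mathcal{M}_{n}^{\xi}/\mathbb{C};q\right)=\frac{1}{(q-1)^{n}}\sum\limits_{\boldsymbol{Z}\leq H\leq\boldsymbol{\mu}_{\boldsymbol{2}}^{n}}\widetilde{N}_{n,H}^{\xi}(q)=\frac{N_{n}^{\xi}(q)}{(q-1)^{n}}=E_{n}(q),
\]
which establishes the first equality.

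For the second equality I would apply the Frobenius formula \ref{prop:frobenius} with $G=\mathrm{Sp}(2n,\mathbb{F}_{q})$ and $z=\xi$, obtaining
\[
N_{n}^{\xi}(q)=\sum\limits_{\chi\in\mathrm{Irr}(G^{F})}\chi(\xi)\left(\frac{|G^{F}|}{\chi(1)}\right)^{2g-1}.
\]
Because $\xi$ is a regular semisimple element whose centralizer is the maximally split torus $T$, formula \ref{eq:1.4} reduces the sum to the irreducible constituents of the Deligne--Lusztig characters $R_{T}^{G}(\theta)$, i.e.\ to the principal series attached to $T$; equivalently one groups $\chi$ by its type $\tau(\chi)$ of Definition \ref{defn:type}. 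By Proposition \ref{prop:25} the degree $\chi_{\tau}(1)$ depends only on $\tau$, hence $|G^{F}|/\chi(1)=H_{\tau}(q)$ by \ref{eq:1.12}. Collecting by type and using the definition $C_{\tau}=\sum_{\tau(\chi)=\tau}\chi(\xi)$ from Section \ref{subsec:C tau} then gives
\[
N_{n}^{\xi}(q)=\sum\limits_{\tau}\left(H_{\tau}(q)\right)^{2g-1}C_{\tau},
\]
and dividing by $(q-1)^{n}$ completes the identification with the claimed formula.

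The only substantive verification is that each intermediate expression is genuinely a polynomial in $q$ (rather than a rational function or quasi-polynomial): integrality of $H_{\tau}(q)$ was noted in Remark \ref{rem:27}, integrality of $C_{\tau}$ was established in \ref{eq:3.1.14} of Section \ref{subsec:C tau} (this is where the hypothesis $q\underset{2m}{\equiv}1$ enters, via $|Q|\equiv -1\pmod m$), and divisibility by $(q-1)^{n}$ was verified just before Theorem \ref{thm:main} using Remarks \ref{rem:20} and \ref{rem:21}. The main conceptual obstacle has already been absorbed into Theorem \ref{thm:main}, namely that each stratum is polynomial count so that Katz's theorem \ref{thm:7} applies; once that is granted, the corollary is a bookkeeping consequence of additivity together with the Frobenius--Deligne--Lusztig expansion.
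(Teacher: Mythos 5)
Your proposal is correct and follows exactly the same route as the paper's own argument: stratify $\mathcal{M}_{n}^{\xi}$ by the stabilizer subgroups $H$, invoke additivity of the $E$-polynomial and Theorem \ref{thm:main} to reduce to $\sum_{H}\widetilde{N}_{n,H}^{\xi}(q)/(q-1)^{n}$, identify this with $N_{n}^{\xi}(q)/(q-1)^{n}$ via the $H=\boldsymbol{Z}$ case of \ref{eq:3.1.3}, and then use Frobenius plus \ref{eq:1.4} and Proposition \ref{prop:25} to re-express $N_{n}^{\xi}(q)$ by type. The only superficial difference is that you re-derive $N_{n}^{\xi}(q)=\sum_{\tau}H_{\tau}(q)^{2g-1}C_{\tau}$ inside the proof, whereas the paper has already established that identity in the preceding discussion and its proof of the corollary consists solely of the stratification step.
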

\begin{proof}
From Remark \ref{rem:strata}, we have that 
\begin{equation}
E{\left(\mathcal{M}_{n}^{\xi}/\mathbb{C};q\right)}=\sum\limits_{\substack{\boldsymbol{Z}\leq H\leq\boldsymbol{\mu}_{\boldsymbol{2}}^{n}}}{E{\left(\widetilde{\mathcal{M}}_{n,H}^{\xi}/\mathbb{C};q\right)}}\label{eq:3.3.4}
\end{equation}
so the corollary follows from Theorem \ref{thm:main} plugging \ref{eq:3.3.1}
into \ref{eq:3.3.4}.
\end{proof}
\begin{rem}
\label{rem:3.6}Combining Corollary \ref{cor:Euler} and Remark \ref{rem:.3.2},
we see that $E{\left(\mathcal{M}_{n}^{\xi}/\mathbb{C};q\right)}$ does
not depend on the particular choice of $\xi$ but only on the conditions
\ref{eq:2.1} that its eigenvalues have to satisfy. Thus we have actually
computed the $E$-polynomial of a very large family of parabolic character
varieties.
\end{rem}
\begin{example}
If we take $q\underset{8}{\equiv}1$ and divide both sides by $\left(q-1\right)$
in \ref{eq:1.1.4}, we get 
\[
\frac{N_{1}^{\xi}{\left(q\right)}}{q-1}={\left(q^{3}-q\right)^{2g-2}}{\left(q^{2}+q\right)}+{\left(q^{2}-1\right)^{2g-2}}{\left(q+1\right)}+{\left(2^{2g}-2\right)}{\left(q^{2}-q\right)^{2g-2}q}
\]
and by Corollary \ref{cor:Euler} and Remark \ref{rem:3.6}, this
is the $E$-polynomial of $\mathcal{M}_{1}^{\xi}/\mathbb{C}$ for
all possible choice of $\xi$. This result perfectly agrees with the
one obtained in \cite[Theorem 2]{key-15}.
\end{example}
We conclude this section proving the following interesting fact on $E{\left(\mathcal{M}_{n}^{\xi}/\mathbb{C};q\right)}$.
\begin{cor}
\label{cor:pal}
The $E$-polynomial of $\mathcal{M}_{n}^{\xi}/\mathbb{C}$ is palindromic.
\end{cor}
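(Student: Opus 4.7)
The plan is to verify palindromicity directly from the main formula in Corollary \ref{cor:Euler}, using the functional equation \ref{eq:1.13} for $H_\tau$ together with the involution on types $\tau \mapsto \tau'$ defined in \ref{eq:1.10-1}. Recall that $\dim \mathcal{M}_n^\xi = d_n = (2g-1)n(2n+1)-n$ by Corollary \ref{cor:2.14}, so palindromicity means $q^{d_n} E(\mathcal{M}_n^\xi; q^{-1}) = E(\mathcal{M}_n^\xi; q)$.

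First I would observe that the explicit formula \ref{eq:3.1.14} for $C_\tau$, namely
\[
C_\tau = \frac{(-1)^l l!\, [W_n : S_{\lambda,\alpha_1,\alpha_\epsilon}]\, \beta(1)}{\prod_i m_i(\lambda)!},
\]
depends on the character $\beta$ of $S_{\lambda,\alpha_1,\alpha_\epsilon}$ only through its degree $\beta(1)$. Since the sign character $\varepsilon$ satisfies $\varepsilon(1)=1$, the dual type $\tau' = (\lambda, \alpha_1, \alpha_\epsilon, \varepsilon\beta)$ has the same degree $(\varepsilon\beta)(1) = \beta(1)$, and therefore $C_{\tau'} = C_\tau$. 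Moreover, $\tau \mapsto \tau'$ is clearly an involution on the indexing set of the sum in \ref{eq:3.3.3}.

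Next I would simply compute: by \ref{eq:1.13} and using that $2g-1$ is odd,
\[
H_\tau(q^{-1})^{2g-1} = \frac{(-1)^{n(2g-1)}}{q^{n(2n+1)(2g-1)}} H_{\tau'}(q)^{2g-1} = \frac{(-1)^n}{q^{n(2n+1)(2g-1)}} H_{\tau'}(q)^{2g-1},
\]
while $(q^{-1}-1)^{-n} = (-1)^n q^n (q-1)^{-n}$. Substituting into Corollary \ref{cor:Euler} yields
\[
E(\mathcal{M}_n^\xi; q^{-1}) = \frac{1}{q^{n(2n+1)(2g-1)-n}} \cdot \frac{1}{(q-1)^n} \sum_\tau H_{\tau'}(q)^{2g-1} C_\tau = \frac{1}{q^{d_n}}\cdot\frac{1}{(q-1)^n}\sum_\tau H_{\tau'}(q)^{2g-1} C_\tau.
\]
Finally, reindexing the sum by $\tau \mapsto \tau'$ (an involution) and invoking $C_\tau = C_{\tau'}$, the right-hand sum equals $\sum_\tau H_\tau(q)^{2g-1} C_\tau$, so that $q^{d_n} E(\mathcal{M}_n^\xi; q^{-1}) = E(\mathcal{M}_n^\xi; q)$, completing the proof.

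The main conceptual obstacle is really packed into \ref{eq:1.13}, whose derivation from \cite[Lemma 11.3.2]{key-7} is the source of the sign twist; once this functional equation and the type-duality involution are in hand, the corollary reduces to a bookkeeping exercise in reindexing the sum. The only subtlety to double-check is that the map $\beta \mapsto \varepsilon \beta$ really is an involution on $\mathrm{Irr}(S_{\lambda,\alpha_1,\alpha_\epsilon})$ preserving degrees, which is automatic since $\varepsilon$ is a linear character.
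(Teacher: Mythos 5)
Your proposal is correct and follows essentially the same route as the paper: both reduce to proving $q^{d_n}E_n(q^{-1})=E_n(q)$, invoke the functional equation \ref{eq:1.13} for $H_\tau$, observe $C_\tau=C_{\tau'}$ from \ref{eq:3.1.14}, and reindex the sum via the type-duality involution. You have merely expanded the paper's ``after some little computations'' into the explicit sign-and-power bookkeeping, which checks out.
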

\begin{proof}
By Corollary \ref{cor:Euler}, it is sufficient to prove that $E_{n}{\left(q\right)}$ is palindromic. Since the degree of $E_{n}{\left(q\right)}$ is equal to $d_{n}={\left(2g-1\right)}n{\left(2n+1\right)}-n$ for Remark \ref{rem:28}, this is equivalent to prove that
\[
q^{d_{n}}E_{n}{\left(q^{-1}\right)}=E_{n}^{\xi}{\left(q\right)}.
\]
Now, because of \ref{eq:3.1.14}, $C_{\tau}=C_{\tau^{\prime}}$, where $\tau^{\prime}$ is the type dual to ${\tau}$ as in \ref{eq:1.10-1}, so the corollary follows using \ref{eq:1.13} in \ref{eq:3.3.3} after some little computations.
\end{proof}

\subsection{Connectedness and Euler characteristic}

We now deduce some important topological information on $\mathcal{M}_{n}^{\xi}/\mathbb{C}$. 
\begin{cor}
\label{cor:con}
The variety $\mathcal{M}_{n}^{\xi}/\mathbb{C}$ is connected.
\end{cor}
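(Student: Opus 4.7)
The plan is to deduce connectedness of $\mathcal{M}_n^\xi/\mathbb{C}$ by showing that its $E$-polynomial is monic, and then combining this with the smoothness of the principal stratum and the equidimensionality of $\mathcal{M}_n^\xi$ from Corollary \ref{cor:2.14}. Remark \ref{rem:3} equates the leading coefficient of the $E$-polynomial of a smooth variety with the number of its top-dimensional connected components, so it suffices to show that this coefficient equals $1$.

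First I would compute the leading coefficient of $E_n(q)$ directly from \ref{eq:3.3.3}. Multiplying through, the polynomial $(q-1)^n E_n(q) = \sum_\tau H_\tau(q)^{2g-1} C_\tau$ has degree $d_n + n = (2g-1)n(2n+1)$. Since each $H_\tau(q) = |\mathrm{Sp}(2n,\mathbb{F}_q)|/\chi_\tau(1)$ is monic of degree $n(2n+1) - \deg_q \chi_\tau(1)$, only types $\tau$ for which $\chi_\tau(1)$ is constant in $q$ can contribute at the top degree. By Remark \ref{rem:28}, this selects exactly $\tau_0 := \tau(1_{G^F})$, and by the earlier remark on types of the trivial character we have $\tau_0 = (\hat{0}, n, 0, \beta)$ with $\beta(1) = 1$. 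Substituting these data into \ref{eq:3.1.14} gives $C_{\tau_0} = 1$, so the leading coefficient of $E_n(q)$ equals $1$ and $E(\mathcal{M}_n^\xi/\mathbb{C}; q)$ is monic.

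Finally I would confirm that this leading coefficient is geometrically controlled by the principal stratum $\widetilde{\mathcal{M}}_{n,\boldsymbol{Z}}^\xi$, which is smooth since it is the geometric quotient of the smooth open set $\widetilde{\mathcal{U}}_{n,\boldsymbol{Z}}^\xi \subseteq \mathcal{U}_n^\xi$ (Proposition \ref{prop:2.13}) by the free action of $T/\boldsymbol{Z}$. For any $H \supsetneq \boldsymbol{Z}$, Theorem \ref{thm:2.19} decomposes $\mathcal{U}_{n,H}^\xi$ as $\prod_i \mathcal{U}_{a_i^k}^{\Pi_i(\xi)}$, and because every nontrivial element of $H/\boldsymbol{Z}$ has mixed $\pm 1$ eigenvalues, the partition $(a_1^k,\ldots,a_{2^k}^k)$ of $n$ has at least two nonzero parts. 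The strict convexity inequality $\sum_i (a_i^k)^2 < n^2$ then yields
\[
\dim \widetilde{\mathcal{M}}_{n,H}^\xi \;\leq\; (2g-1)\sum_i a_i^k(2a_i^k+1) - n \;<\; (2g-1)n(2n+1) - n = d_n.
\]
By additivity of $E$-polynomials under the stratification (Remark \ref{rem:strata}), the degree-$d_n$ coefficients of $E(\mathcal{M}_n^\xi/\mathbb{C}; q)$ and $E(\widetilde{\mathcal{M}}_{n,\boldsymbol{Z}}^\xi/\mathbb{C}; q)$ therefore agree; this common value $1$ counts the connected components of the pure $d_n$-dimensional smooth stratum $\widetilde{\mathcal{M}}_{n,\boldsymbol{Z}}^\xi$, and density of this stratum in $\mathcal{M}_n^\xi$ propagates connectedness to the whole character variety. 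The main technical point is the strict dimension drop for the non-principal strata: a routine convexity argument, but essential for reducing the component count to the smooth open locus where the geometric interpretation of the leading coefficient applies.
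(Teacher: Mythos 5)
Your computation of the leading coefficient of $E_n(q)$ is exactly the paper's: isolate the trivial character via Remark \ref{rem:28}, identify its type as $(\hat{0},n,0,\beta)$ with $\beta(1)=1$, and read off $C_{\tau_0}=1$ from \ref{eq:3.1.14}. So the core of the argument is the same. Where you genuinely diverge is in justifying why this leading coefficient counts components of $\mathcal{M}_n^\xi$: the paper's proof simply cites Remark \ref{rem:propr}, which is stated for a \emph{smooth} variety, applied to $\mathcal{M}_n^\xi$ --- but $\mathcal{M}_n^\xi$ is only known to be equidimensional (Corollary \ref{cor:2.14}), not smooth, since the $T$-action on $\mathcal{U}_n^\xi$ has nontrivial finite stabilizers. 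You instead pass to the smooth open stratum $\widetilde{\mathcal{M}}_{n,\boldsymbol{Z}}^\xi$ (a free quotient of the smooth $\widetilde{\mathcal{U}}_{n,\boldsymbol{Z}}^\xi$) and use the strict convexity bound $\sum_i (a_i^k)^2 < n^2$ for $H \supsetneq \boldsymbol{Z}$, combined with Theorem \ref{thm:2.19} and the dimension formula \ref{eq:dim}, to show every other stratum has dimension strictly below $d_n$; equidimensionality of $\mathcal{M}_n^\xi$ then forces the open stratum to be dense. This fills a real gap in the paper's exposition --- Remark \ref{rem:propr} as stated does not literally apply to $\mathcal{M}_n^\xi$, and your stratum-by-stratum argument (or, alternatively, the general fact that the coefficient of $(xy)^{\dim X}$ in $E(X;x,y)$ counts top-dimensional irreducible components for any equidimensional $X$, smooth or not) is needed to make the step airtight. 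One minor point worth making explicit: after concluding the dense smooth stratum is connected, you should invoke that irreducibility (rather than mere connectedness) of that stratum, or pass through closures, to conclude connectedness of $\mathcal{M}_n^\xi$, since a priori the leading coefficient of the smooth stratum's $E$-polynomial counts its connected components, each of which is irreducible; your density argument then correctly propagates connectedness to the closure.
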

\begin{proof}
When $g=0$, $\mathcal{M}_{n}^{\xi}/\mathbb{C}$ is clearly empty,
unless $n=1$ when it is a point. Therefore, we can assume $g\geq1$
for the rest of the proof.

Proposition \ref{prop:2.13} tells us that $\mathcal{U}_{n}^{\xi}$ is smooth, while Corollary \ref{cor:2.14} says that each connected component of $\mathcal{M}_{n}^{\xi}$
has dimension $d_{n}$. Thus, by the last part of Remark \ref{rem:propr}, the leading coefficient of
$E{\left(\mathcal{M}_{n}^{\xi}/\mathbb{C};q\right)}$ is the number
of connected components of $\mathcal{M}_{n}^{\xi}$. By Corollary
\ref{cor:Euler}, $E{\left(\mathcal{M}_{n}^{\xi}/\mathbb{C};q\right)}=E_{n}{\left(q\right)}$,
so it is enough to determine the leading coefficient of $E_{n}^{\xi}{\left(q\right)}$.

By \ref{eq:3.3.3}, the top degree term in $E_{n}{\left(q\right)}$
corresponds to the biggest $H_{\tau}{\left(q\right)}$, that is attained
by the trivial character $1_{\mathrm{Sp}{\left(2n,\mathbb{F}_{q}\right)}}$
because of \ref{eq:1.12} and Remark \ref{rem:28}. Thus, using Remark \ref{rem:29}, the leading coefficient of
$E_{n}{\left(q\right)}$ is equal to $C_{\left(\hat{0},n,0,\beta\right)}$, where $\beta$ is the irreducible character of the Weyl group $W_{n}$ of $\mathrm{Sp}{\left(2n,\mathbb{F}_{q}\right)}$ corresponding to $1_{\mathrm{Sp}{\left(2n,\mathbb{F}_{q}\right)}}$.
Equation \ref{eq:3.1.14}, together with Remark \ref{rem:22}, tells us that $C_{\left(\hat{0},n,0,\beta\right)}=1$, so the corollary follows.
\end{proof}

\begin{cor}
\label{cor:char}
The Euler characteristic $\chi{\left(\mathcal{M}_{n}^{\xi}\right)}$
of $\mathcal{M}_{n}^{\xi}/\mathbb{C}$ vanishes for $g>1$. For
$g=1$, we have 
\[
\sum\limits_{\substack{n\geq0}}{\frac{\chi{\left(\mathcal{M}_{n}^{\xi}\right)}}{\left|W_{n}\right|}T^{n}}=\prod\limits_{\substack{k\geq1}}{\frac{1}{\left(1-T^{k}\right)^{3}}}=1+3T+9T^{2}+\cdots.
\]
\end{cor}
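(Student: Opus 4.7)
The plan is to apply $\chi(X)=E(X;1,1)$ from Remark \ref{rem:3} to the main formula of Corollary \ref{cor:Euler}, thereby reducing everything to evaluating
\[
E_{n}(q)=\frac{1}{(q-1)^{n}}\sum_{\tau}(H_{\tau}(q))^{2g-1}C_{\tau}
\]
at $q=1$. The central step is to control the order of vanishing of each summand. From $|\mathrm{Sp}(2n,\mathbb{F}_{q})|=q^{n^{2}}\prod_{i=1}^{n}(q^{2i}-1)$, the numerator of $H_{\tau}(q)=|\mathrm{Sp}(2n,\mathbb{F}_{q})|/\chi_{\tau}(1)$ vanishes to order exactly $n$ at $q=1$ with residue $\prod_{i=1}^{n}2i=2^{n}n!=|W_{n}|$, while by Remark \ref{rem:21} the denominator satisfies $\chi_{\tau}(1)|_{q=1}=[W_{n}:S_{\theta}]\beta(1)$, a nonzero positive integer. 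Hence $H_{\tau}(q)$ vanishes to order exactly $n$, so $(H_{\tau}(q))^{2g-1}/(q-1)^{n}$ vanishes to order $2n(g-1)$.

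For $g>1$ this order is strictly positive, every summand dies, and we conclude $\chi(\mathcal{M}_{n}^{\xi})=0$. For $g=1$ each summand has the finite limit
\[
\lim_{q\to 1}\frac{H_{\tau}(q)}{(q-1)^{n}}=\frac{|W_{n}|}{[W_{n}:S_{\lambda,\alpha_{1},\alpha_{\epsilon}}]\beta(1)}=\frac{|S_{\lambda,\alpha_{1},\alpha_{\epsilon}}|}{\beta(1)}.
\]
Substituting the closed form \ref{eq:3.1.14} for $C_{\tau}$ and cancelling the factors of $\beta(1)$ and $[W_{n}:S_{\lambda,\alpha_{1},\alpha_{\epsilon}}]$ produces
\[
\chi(\mathcal{M}_{n}^{\xi})=|W_{n}|\sum_{(\lambda,\alpha_{1},\alpha_{\epsilon},\beta)}\frac{(-1)^{l(\lambda)}l(\lambda)!}{\prod_{i}m_{i}(\lambda)!}.
\]
Since the summand does not depend on $\beta$, summation over $\beta\in\mathrm{Irr}(S_{\lambda,\alpha_{1},\alpha_{\epsilon}})$ multiplies by $\prod_{i}p(\lambda_{i})\cdot p_{B}(\alpha_{1})p_{B}(\alpha_{\epsilon})$, where $p_{B}(m)$ denotes the number of bipartitions of $m$.

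The remaining computation is a generating-function assembly: after dividing by $|W_{n}|$ and summing over $n$, the data $\lambda$, $\alpha_{1}$, $\alpha_{\epsilon}$ decouple in the variable $T$. Each of the $\alpha_{1}$ and $\alpha_{\epsilon}$ slots contributes $\sum_{m\geq 0}p_{B}(m)T^{m}=\prod_{k\geq 1}(1-T^{k})^{-2}$. The partition factor, after recognizing that $l(\lambda)!/\prod_{i}m_{i}(\lambda)!$ counts the compositions whose underlying partition is $\lambda$, collapses to the geometric series
\[
\sum_{l\geq 0}\Bigl(-\sum_{a\geq 1}p(a)T^{a}\Bigr)^{l}=\frac{1}{1+\sum_{a\geq 1}p(a)T^{a}}=\prod_{k\geq 1}(1-T^{k}).
\]
Multiplying the three pieces gives $\prod_{k\geq 1}(1-T^{k})^{-3}$, as asserted.

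The main obstacle is the careful bookkeeping of vanishing orders at $q=1$, and in particular the uniform non-vanishing of $\chi_{\tau}(1)|_{q=1}=[W_{n}:S_{\theta}]\beta(1)$ across all types $\tau$, as guaranteed by Remark \ref{rem:21}. Once this is secured, the collapse of the sum via Corollary \ref{cor:Euler} is essentially formal and the final product identity reduces to a one-line manipulation of standard partition and bipartition generating functions.
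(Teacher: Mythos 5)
Your proposal is correct and follows essentially the same route as the paper: specialize the main formula at $q=1$, track the order of vanishing of $(H_\tau(q))^{2g-1}/(q-1)^n$ (via the group order, Remark \ref{rem:20}, and Remark \ref{rem:21}) to kill everything for $g>1$, then for $g=1$ substitute \ref{eq:3.1.14}, sum over $\beta$ using $|\mathrm{Irr}(S_{\lambda,\alpha_1,\alpha_\epsilon})|$, and assemble the three generating-function factors. (As a minor aside, you correctly use $|\mathrm{Sp}(2n,\mathbb{F}_q)|=q^{n^2}\prod_{i=1}^n(q^{2i}-1)$ with leading coefficient $2^n n!=|W_n|$ at $q=1$; the paper's equation \ref{eq:3.3.5} contains a typo, but its downstream equation \ref{eq:3.3.6} agrees with yours.)
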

\begin{proof}
By Corollary \ref{cor:Euler} and 1 in Remark \ref{rem:3}, the Euler
characteristic of $\mathcal{M}_{n}^{\xi}/\mathbb{C}$ equals
\begin{equation}
E_{n}{\left(1\right)}=\sum\limits_{\substack{\tau}}{\left.\frac{\left(H_{\tau}{\left(q\right)}\right)^{2g-1}}{\left(q-1\right)^{n}}\right|_{q=1}C_{\tau}}.\label{eq:3.3.2}
\end{equation}
Since 
\begin{equation}
\left|\mathrm{Sp}{\left(2n,\mathbb{F}_{q}\right)}\right|={\left(q-1\right)^{n}}{\prod_{i=1}^{n}{{\left(q^{i}+1\right)}q^{2i-1}}}\label{eq:3.3.5}
\end{equation}
it follows from the definition \ref{eq:1.12} of $H_{\tau}{\left(q\right)}$
and Remark \ref{rem:21} that $\left(q-1\right)^{n\left(2g-2\right)}$
divides $\frac{\left(H_{\tau}{\left(q\right)}\right)^{2g-1}}{\left(q-1\right)^{n}}$,
so $E_{n}{\left(1\right)}=0$ when $g>1$ proving
the first assertion.

When $g=1$, plugging \ref{eq:3.3.5} in \ref{eq:1.12} and using Remark \ref{rem:21},
we get 
\begin{equation}
\left.\frac{H_{\tau}{\left(q\right)}}{\left(q-1\right)^{n}}\right|_{q=1}=\frac{\left|W_{n}\right|}{\left[W_{n}:S_{\lambda,\alpha_{1},\alpha_{\epsilon}}\right]\beta{\left(1\right)}}\label{eq:3.3.6}
\end{equation}
if $\tau=\left(\lambda,\alpha_{1},\alpha_{\epsilon},\beta\right)$
where $\lambda\vdash c$, $c+\alpha_{1}+\alpha_{\epsilon}=n$ and
$\beta\in\mathrm{Irr}{\left(S_{\lambda,\alpha_{1},\alpha_{\epsilon}}\right)}$,
so plugging \ref{eq:3.3.6} and \ref{eq:3.1.14} in \ref{eq:3.3.2}
for $g=1$ and summing over $\beta$, we have 
\begin{equation}
E_{n}{\left(1\right)}=\left|W_{n}\right|\sum\limits_{\substack{\lambda,\alpha_{1},\alpha_{\epsilon}}}{\frac{\left(-1\right)^{l{\left(\lambda\right)}}l{\left(\lambda\right)}!}{\prod\limits_{\substack{i}}{m_{i}{\left(\lambda\right)}!}}\left|\mathrm{Irr}{\left(S_{\lambda,\alpha_{1},\alpha_{\epsilon}}\right)}\right|}.\label{eq:3.3.2-1}
\end{equation}
Since $S_{\lambda,\alpha_{1},\alpha_{\epsilon}}={\left(\prod\limits_{\substack{i=1}}^{l{\left(\lambda\right)}}{S_{\lambda_{i}}}\right)}\times W_{\alpha_{1}}\times W_{\alpha_{\epsilon}}$,
it follows that 
\[
\mathrm{Irr}\left(S_{\lambda,\alpha_{1},\alpha_{\epsilon}}\right)=\prod\limits_{\substack{i=1}}^{{l{\left(\lambda\right)}}}{p{\left(\lambda_{i}\right)}\left|\mathrm{Irr}{\left(W_{\alpha_{1}}\right)}\right|\left|\mathrm{Irr}{\left(W_{\alpha_{\epsilon}}\right)}\right|}
\]
where $p{\left(\lambda_{i}\right)}$ is the number of partitions of
$\lambda_{i}$, for $i=1,\ldots,l\left(\lambda\right)$. Thus, if
we collect the partitions of the same size and length, summing over
$\alpha_{1}$ and $\alpha_{\epsilon}$ \ref{eq:3.3.2-1} becomes 
\begin{equation}
E_{n}{\left(1\right)}=\left|W_{n}\right|\sum\limits_{\substack{c=0}}^{n}{a_{c}b_{n-c}}\label{eq:3.3.7}
\end{equation}
with 
\[
a_{c}:=\sum\limits_{\substack{l\geq0}}{\sum\limits_{\substack{\lambda\vdash c\\l{\left(\lambda\right)}=l}}\frac{\left(-1\right)^{l}l!}{\prod\limits_{\substack{i}}{m_{i}{\left(\lambda\right)}!}}\prod\limits_{\substack{i=1}}^{l}{p{\left(\lambda_{i}\right)}}}
\]
and
\[
b_{n-c}:=\sum\limits_{\substack{\alpha_{1},\alpha_{\epsilon}\geq 0\\ \alpha_{1}+\alpha_{\epsilon}=n-c}}{\left|\mathrm{Irr}{\left(W_{\alpha_{1}}\right)}\right|\left|\mathrm{Irr}{\left(W_{\alpha_{\epsilon}}\right)}\right|}
\]
so
\begin{equation}
\sum\limits_{\substack{n\geq0}}{\frac{E_{n}{\left(1\right)}}{\left|W_{n}\right|}T^{n}}={\left(\sum\limits_{\substack{c\geq0}}{a_{c}T^{c}}\right)}{\left(\sum\limits_{\substack{m\geq0}}{b_{m}T^{m}}\right)}\label{eq:3.3.10-1}
\end{equation}

Now, it is easy to see that 
\[
\sum\limits_{\substack{c\geq0}}{a_{c}T^{c}}=\sum\limits_{\substack{l\geq0}}{\left(-\sum\limits_{\substack{n\geq1}}{p{\left(n\right)}T^{n}}\right)^{l}=\frac{1}{\sum\limits_{\substack{n\geq0}}{p{\left(n\right)}T^{n}}}}
\]
so by an identity of Euler, we get 
\begin{equation}
\sum\limits_{\substack{c\geq0}}{a_{c}T^{c}}=\prod\limits_{\substack{k\geq1}}{\left(1-T^{k}\right)}.\label{eq:3.3.8}
\end{equation}

On the other hand, it is known that
\[
\sum\limits_{\substack{n\geq0}}{\left|\mathrm{Irr}{\left(W_{n}\right)}\right|T^{n}}=\prod\limits_{\substack{k\geq1}}{\frac{1}{\left(1-T^{k}\right)^{2}}}
\]
hence
\begin{equation}
\sum\limits_{\substack{m\geq0}}{b_{m}T^{m}}=\prod\limits_{\substack{k\geq1}}{\frac{1}{\left(1-T^{k}\right)^{4}}}.\label{eq:3.3.9}
\end{equation}

Thus the second assertion of the corollary follows by plugging \ref{eq:3.3.8}
and \ref{eq:3.3.9} in \ref{eq:3.3.10-1}.
\end{proof}

\end{document}